\providecommand{\U}[1]{\protect\rule{.1in}{.1in}}
\newtheorem{theorem}{Theorem}
\newtheorem{algorithm}[theorem]{Algorithm}
\newtheorem{corollary}[theorem]{Corollary}
\newtheorem{example}[theorem]{Example}
\newtheorem{lemma}[theorem]{Lemma}
\newtheorem{notation}[theorem]{Notation}
\newtheorem{remark}[theorem]{Remark}
\newenvironment{proof}[1][Proof]{\noindent\textbf{#1.} }{\ \rule{0.5em}{0.5em}}
\begin{document}

\title{Convergence analysis of energy conserving explicit local time-stepping methods
for the wave equation\thanks{M. Mehlin gratefully acknowledges financial
support by the Deutsche Forschungsgemeinschaft (DFG) through CRC 1173.}}
\author{Marcus J. Grote\thanks{Department of Mathematics and Computer Science,
University of Basel, Spiegelgasse 1, 4051 Basel, Switzerland,
(marcus.grote@unibas.ch)}
\and Michaela Mehlin\thanks{Institute for Applied and Numerical Analysis, Karlsruhe
Institute of Technology, 76131 Karlsruhe, Germany,(michaela.mehlin@kit.edu).}
\and Stefan A. Sauter\thanks{Institute for Mathematics, University of Zurich,
Winterthurerstrasse 190, 8057 Zurich, Switzerland, (stas@math.uzh.ch)}
}
\maketitle

\begin{abstract}
Local adaptivity and mesh refinement are key to the efficient simulation of wave phenomena
in heterogeneous media or complex geometry. Locally refined meshes, however, dictate
a small time-step everywhere with a crippling
effect on any explicit time-marching method. In \cite{DiazGrote09} a leap-frog (LF) based  explicit local time-stepping (LTS) method was proposed, which overcomes
the severe bottleneck due to a few small elements
by taking small time-steps in the locally refined region
and larger steps elsewhere. Here a rigorous convergence proof is presented
for the fully-discrete LTS-LF method
when combined with a standard conforming finite element method (FEM) in space. Numerical results
further illustrate the usefulness of the LTS-LF Galerkin FEM in the presence of corner singularities.

\end{abstract}

\noindent\textbf{Keywords}:
wave propagation, finite element methods, explicit time integration, leap-frog method, error analysis, convergence theory

\vspace{0.5cm}

\noindent\textbf{AMS-Classification}:
65M12, 65M20, 65M60, 65L06, 65L20


\section{Introduction}
Efficient numerical methods are crucial for the
simulation of time-dependent acoustic, electromagnetic or elastic wave phenomena. Finite element
methods (FEM), in particular, easily accommodate varying mesh sizes or polynomial degrees. Hence, they are remarkably effective and widely used for the spatial discretization in heterogeneous media or
complex geometry. However, as spatial discretizations become increasingly accurate and flexible,
the need for more sophisticated time-integration methods for the resulting systems
of ordinary differential equations (ODE) becomes all the more
apparent.

Today's standard use of local adaptivity and mesh refinement
causes a severe bottleneck for any standard explicit time integration. Even if the refined
region consists of only a few small elements, those smallest elements will impose
a tiny time-step everywhere for stability reasons.
To overcome that geometry induced stiffness, various local
time integration strategies were devised in recent years.
Typically the mesh is partitioned into a ``coarse'' part, where most of the elements are located, and a ``fine'' part, which contains the remaining few smallest elements. Inside the ``coarse'' part, standard explicit methods are used for time integration. Inside the ``fine'' part, local time-stepping
(LTS) methods either use implicit or explicit time integration.

Locally implicit methods are based on implicit-explicit (IMEX) approaches commonly used in CFD
for operator splitting \cite{ARW95,HV03}.
They require the solution of a linear system inside the refined region at every
time-step, which becomes increasingly expensive (and ill-conditioned) as the mesh size decreases  \cite{KCGH07}. Alternatively, exponential Adams methods \cite{HO11} apply the matrix exponential locally in the fine part while reducing to the underlying Adams-Bashforth scheme elsewhere.

Locally implicit or exponential time integrators typically use the same time-step everywhere but
apply different methods in the "fine" and the "coarse" part. In contrast, explicit LTS methods typically
use the same method everywhere but take smaller time-steps inside the ``fine'' region  \cite{GearWells84}; hence,
they remain fully explicit. Since the finite-difference based adaptive mesh refinement (AMR) method by Berger and Oliger \cite{BergerOliger}, various explicit LTS were proposed in the context of discontinuous
Galerkin (DG) FEM, which permit a different time-step inside each individual element \cite{Flaherty97,LGM07,DKT07,TDMS09,CS07,CS09}. In \cite{DNBH15} multiple time-stepping algorithms were presented which allow any choice of explicit Adams type or predictor-corrector scheme for the integration of the coarse region and any choice of ODE solver for the integration of the fine part. High-order explicit LTS methods for wave propagation were derived in \cite{Grote_Mitkova,GM13,GMM15} starting either from Leap-Frog,
Adams-Bashforth or Runge-Kutta methods.

In~\cite{ColFouJol3,BecJolyRodri,ColFouJol1}, Collino et al.
proposed a first energy conserving LTS method for the wave equation which
was analyzed
in~\cite{ColFouJol2, JolyRodriguez}. This second-order method conserves a discrete energy
and thereby guarantees stability, but it requires at every time-step the solution of a linear
system at the interface between the fine and the coarser elements; hence, it is not fully explicit.
A fully explicit second-order LTS method was proposed for Maxwell's equations by Piperno ~\cite{Piperno}
and further developed in~\cite{DFFL10, MPFC08}. In  \cite{MZKM13,RPSUG15},
the high-order energy conserving explicit LTS
method proposed in \cite{DiazGrote09} was successfully applied to 3D seismic wave propagation
on a large-scale parallel computer architecture.

Despite the many different explicit LTS methods that were proposed and successfully used for
wave propagation in recent years,
 a rigorous fully discrete space-time convergence theory is still lacking.
In fact, convergence has been proved only for the method of Collino et al. \cite{ColFouJol2,ColFouJol3,JolyRodriguez} and very recently for the locally implicit
method for Maxwell's equations by Verwer \cite{Ver09,DLM13,HS16},
neither fully explicit. Indeed, the difficulty in proving
convergence of fully explicit LTS methods is twofold. On the one hand, classical
proofs of convergence \cite{Dupont,Baker} always assume standard time discretizations,
while proofs for multirate schemes (in the ODE literature)
are always restricted to the finite-dimensional case.
Hence, standard convergence analysis cannot be easily extended
to LTS methods for partial differential equations. On the other hand, when explicit LTS schemes are
reformulated as perturbed one-step schemes,
they involve products of differential and restriction operators, which
do not commute and seem to inevitably lead to a loss of regularity.

Our paper is structured as follows. In Section \ref{SecGalDisc}, we consider a
general second-order wave equation and introduce (the notation for) conforming finite
element spaces on simplicial meshes with local polynomial order $m$. Next, we define
finite-dimensional restriction operators to the "fine" grid and formulate the leap-frog (LF) based LTS method from \cite{DiazGrote09} in a Galerkin conforming finite element setting. In Section \ref{SecStabCons}, we prove continuity and
coercivity estimates for the LTS operator that are robust with respect to the
number of local time-steps $p$, provided a genuine CFL condition is satisfied. Here,
new estimates on the coefficients that appear when rewriting the LTS-LF scheme in "leap-frog
manner"  play a key-role -- see Appendix. Those estimates
pave the way for the stability estimate of the time iteration operator, for
which we then prove a stability bound independently of $p$.
In doing so, the truncation errors are estimated through standard Taylor arguments
for the leap-frog method. Due to the local restriction, however,
a judicious splitting of the iteration operator and its inverse is required
to avoid negative powers of $h$ via inverse inequalities. By combining
our analysis of the semi-discrete formulation, which takes into account the effect of
local time-stepping, with classical error estimates \cite{Baker}, we eventually
obtain optimal convergence rates explicit with respect to the time step
$\Delta t$, the mesh size $h$, the right-hand side, the initial data and the final time $T$,
which hold uniformly with respect to the number of local time-steps $p$.
Finally, in Section \ref{SecNumExp}, we report on some numerical experiments inside an L-shaped domain. By applying the LTS method in the locally refined region near the re-entrant corner,
we obtain a significant speedup over a standard leap-frog
method with a small time-step everywhere.


\section{Galerkin Discretization with Leap-Frog Based Local Time-Stepping}\label{SecGalDisc}

\subsection{The Wave Equation}

Let $\Omega\subset\mathbb{R}^{d}$ be a Lipschitz domain
and $L^{2}\left(  \Omega\right)  $ denote the space of
square integrable, real-valued functions with scalar product denoted by
$\left(  \cdot,\cdot\right)  $ and corresponding norm by $\left\Vert
\cdot\right\Vert =\left(  \cdot,\cdot\right)  ^{1/2}$. Next, let $H^{1}\left(
\Omega\right)  $ denote the standard Sobolev space of all square integrable,
real-valued functions whose first (weak) derivatives are also square
integrable; as usual, $H^{1}\left(  \Omega\right)  $ is equipped with the norm
$\left\Vert u\right\Vert _{H^{1}\left(  \Omega\right)  }= (  \left\Vert
u\right\Vert ^{2}+\left\Vert \nabla u\right\Vert ^{2})^{1/2}$.

We now let
$V\subset H^{1}\left(  \Omega\right)  $ denote a closed subspace of
$H^{1}\left(  \Omega\right)  $, such as $V=H^{1}\left(
\Omega\right)  $ or $V=H_{0}^{1}\left(  \Omega\right)  $, and consider a
bilinear form $a:V\times V\rightarrow\mathbb{R}$ which is symmetric,
continuous, and coercive:%
\begin{subequations}
\label{wellposed}
\end{subequations}%
\begin{equation}
a\left(  u,v\right)  =a\left(  v,u\right)  \qquad\forall u,v\in V \tag{%
\ref{wellposed}%
a}\label{wellposeda}%
\end{equation}
and%
\begin{equation}
\left\vert a\left(  u,v\right)  \right\vert \leq C_{\operatorname*{cont}%
}\left\Vert u\right\Vert _{H^{1}\left(  \Omega\right)  }\left\Vert
v\right\Vert _{H^{1}\left(  \Omega\right)  }\qquad\forall u,v\in V \tag{%
\ref{wellposed}%
b}\label{wellposedb}%
\end{equation}
and%
\begin{equation}
a\left(  u,u\right)  \geq c_{\operatorname*{coer}}\left\Vert u\right\Vert
_{H^{1}\left(  \Omega\right)  }^{2}\qquad\forall u\in V. \tag{%
\ref{wellposed}%
c}\label{wellposedc}%
\end{equation}
For given $u_{0}\in V, v_{0}\in L^{2}\left(  \Omega\right)  $ and $F:\left[
0,T\right]  \rightarrow V^{\prime}$, we consider the wave equation: Find
$u:\left[  0,T\right]  \rightarrow V$ such that%
\begin{equation}
\left(  \ddot{u},w\right)  +a\left(  u,w\right)  =F\left(  w\right)
\quad\forall w\in V,t>0 \label{waveeq}%
\end{equation}
with initial conditions%
\begin{equation}
u\left(  0\right)  =u_{0}\quad\text{and\quad}\dot{u}\left(  0\right)  =v_{0}.
\label{waveeqic}%
\end{equation}
It is well known that (\ref{waveeq})--(\ref{waveeqic}) is well-posed for sufficiently regular $u_0$, $v_0$ and $F$
\cite{LionsMagenesI}. In fact, the weak solution $u$ can be shown to be
continuous in time, that is, $u\in C^{0}(0,T;V),\dot{u}\in C^{0}%
(0,T;L^{2}\left(  \Omega\right)  )$ -- see [\cite{LionsMagenesI}, Chapter III,
Theorems 8.1 and 8.2] for details -- which implies that the
initial conditions (\ref{waveeqic}) are well defined.

\begin{example}
\label{Exmodel problem}The classical second-order wave equation in strong form
is given by%
\begin{equation}%
\begin{split}
u_{tt}-\nabla\cdot(c^{2}\nabla u)  &  =f\qquad\;\,\mbox{in }\Omega
\times(0,T),\\
u  &  =0\qquad\;\,\mbox{on }\Gamma_{D}\times(0,T),\\
\frac{\partial u}{\partial\nu}  &  =0\qquad\;\,\mbox{on }\Gamma_{N}%
\times(0,T),\\
u|_{t=0}  &  =u_{0}\qquad\mbox{in }\Omega,\\
u_{t}|_{t=0}  &  =u_{0}\qquad\mbox{in }\Omega.\\
&
\end{split}
\label{model problem}%
\end{equation}
In this case, we have $V:=H_{D}^{1}\left(  \Omega\right)  :=\left\{  w\in
H^{1}\left(  \Omega\right)  :\left.  w\right\vert _{\Gamma_{D}}=0\right\}  $;
the bilinear form is given by $a\left(  u,v\right)  :=\left(  c^{2}\nabla
u,\nabla u\right)  $ and the right-hand side by $F\left(  w\right)  =\left(
f,w\right)  $ for all $w\in V$.
\end{example}

\subsection{Galerkin Finite Element Discretization}

For the semi-discretization in space, we employ the Galerkin finite element
method and we first have to introduce some notation. We assume for the spatial
dimension $d\in\left\{  1,2,3\right\}  $ and that the bounded Lipschitz domain
$\Omega\subset\mathbb{R}^{d}$ is an interval for $d=1$, a polygonal domain for
$d=2$, and a polyhedral domain for $d=3$. Let $\mathcal{T}:=\left\{  \tau
_{i}:1\leq i\leq N_{\mathcal{T}}\right\}  $ denote a conforming (i.e.: no
hanging nodes), simplicial finite element mesh for $\Omega$. Let%
\[
h_{\tau}:=\operatorname*{diam}\tau\quad\text{and\quad}h:=\max_{\tau
\in\mathcal{T}}h_{\tau}\quad\text{and\quad}h_{\min}:=\min_{\tau\in\mathcal{T}%
}h_{\tau}%
\]
and denote by $\rho_{\tau}$ the diameter of the largest inscribed ball in
$\tau$. As a convention, the simplices $\tau\in\mathcal{T}$ are closed sets.
The shape regularity constant $\gamma$ of the mesh $\mathcal{T}$ is defined by%
\[
\gamma\left(  \mathcal{T}\right)  :=\max_{\tau}\left\{
\begin{array}
[c]{ll}%
\max\left\{  \frac{h_{\tau}}{h_{t}}:t\in\mathcal{T}:t\cap\tau\neq
\emptyset\right\}  & d=1,\\
\dfrac{h_{\tau}}{\rho_{\tau}} & d=2,3,
\end{array}
\right.
\]
and the quasi-uniformity constant by%
\[
C_{\operatorname*{qu}}:=\frac{h}{h_{\min}}.
\]

For $m\in\mathbb{N}$, we define the continuous, piecewise polynomial finite
element space by%
\[
S_{\mathcal{T}}^{m}:=\left\{  u\in C^{0}\left(  \Omega\right)  \mid\forall
\tau\in\mathcal{T}:\left.  u\right\vert _{\tau}\in\mathbb{P}_{m}\right\}  ,
\]
where $\mathbb{P}_{m}$ is the space to $d$-variate polynomials of maximal
total degree $m$. The definition of a Lagrangian nodal basis is standard and
employs the concept of a reference element. Let%
\[
\hat{\tau}:=\left\{  \mathbf{x}=\left(  x_{i}\right)  _{i=1}^{d}\in
\mathbb{R}_{\geq0}^{d}:\sum_{i=1}^{d}x_{i}\leq1\right\}
\]
denote the reference element. For $\tau\in\mathcal{T}$, let $\phi_{\tau
}:\widehat{\tau}\rightarrow\tau$ denote an affine pullback. For $m\geq1$,
we denote by $\hat{\Sigma}^{m}$ a set of nodal points in $\hat{\tau}$ unisolvent on $\mathbb{P}_{m}$, which allow to impose
continuity across simplex faces. The nodal points on a simplex $\tau\in\mathcal{T}$ are then
given by lifting those of the reference element:%
\[
\Sigma_{\tau}^{m}:=\left\{  \phi_{\tau}\left(  z\right)  :z\in\hat{\Sigma}%
^{m}\right\}  .
\]
The set of global nodal points is given by%
\[
\Sigma_{\mathcal{T}}^{m}:=%
{\textstyle\bigcup\nolimits_{\tau\in\mathcal{T}}}
\Sigma_{\tau}^{m}.
\]
A Lagrange basis for $S_{\mathcal{T}}^{m}$ is given by $\left(  b_{z,m}%
\right)  _{z\in\Sigma_{\mathcal{T}}^{m}}$ via the conditions%
\[
b_{z,m}\in S_{\mathcal{T}}^{m}\quad\text{and\quad}\forall z^{\prime}\in
\Sigma_{\mathcal{T}}^{m}\text{ it holds }b_{z,m}\left(  z^{\prime}\right)
=\left\{
\begin{array}
[c]{ll}%
1 & z=z^{\prime},\\
0 & \text{otherwise.}%
\end{array}
\right.
\]

For a subset $\Sigma\subset\Sigma_{\mathcal{T}}^{m}$, we define a
\textit{prolongation map} $P_{\Sigma}:\mathbb{R}^{\Sigma}\rightarrow
S_{\mathcal{T}}^{m}$ and a \textit{restriction map} $\mathbf{R}_{\Sigma
}:S_{\mathcal{T}}^{m}\rightarrow\mathbb{R}^{\Sigma}$ by%
\[
P_{\Sigma}\mathbf{u}=\sum_{z\in\Sigma}u_{z}b_{z,m}\quad\text{and\quad}\left(
\mathbf{R}_{\Sigma}v\right)  =\left(  \int_{\Omega}vb_{z,m}\right)
_{z\in\Sigma}.
\]
The mass matrix, $\mathbf{M}_{\Sigma}$, is given by%
\[
\mathbf{M}_{\Sigma}:=\left(  \int_{\Omega}b_{z,m}b_{z^{\prime},m}\right)
_{z,z^{\prime}\in\Sigma}\text{.}%
\]
If $\Sigma=\Sigma_{\mathcal{T}}^{m}$ holds, we write $P,\mathbf{R}$,
$\mathbf{M}$ short for $P_{\Sigma},\mathbf{R}_{\Sigma}$, $\mathbf{M}_{\Sigma}$.

\begin{remark}
Since $\mathbf{M}_{\Sigma}=\mathbf{R}_{\Sigma}P_{\Sigma}$, we also have
$P_{\Sigma}^{-1}=\mathbf{M}_{\Sigma}^{-1}\mathbf{R}_{\Sigma}$.
\end{remark}

The matrix $\mathbf{M}_{\Sigma}$ is the matrix representation of the $L^{2}%
$-scalar product with respect to the basis $\left(  b_{z,m}\right)
_{z\in\Sigma}$. We introduce a diagonally weighted, mesh dependent Euclidean
scalar product which is equivalent to the bilinear form $\left\langle
\mathbf{u},\mathbf{M}_{\Sigma}\mathbf{v}\right\rangle $ (cf. Lemma
\ref{Satz9.1}), where $\left\langle \cdot,\cdot\right\rangle $ denotes the
Euclidean scalar product on $\mathbb{R}^{\Sigma}$.

For $u=P\mathbf{u}$ and $v=P\mathbf{v}$ with $\mathbf{u}=\left(  u_{z}\right)
_{z\in\Sigma_{\mathcal{T}}^{m}}$ and $\mathbf{v}=\left(  v_{z}\right)
_{z\in\Sigma_{\mathcal{T}}^{m}}$ we set%
\[
\left(  u,v\right)  _{\mathcal{T}}:=\sum_{\tau\in\mathcal{T}}\left\vert
\tau\right\vert \sum_{\mathbf{z}\in\Sigma_{\tau}^{m}}u_{z}v_{z}=\left\langle
\mathbf{D}_{\Sigma_{\mathcal{T}}^{m}}\mathbf{u},\mathbf{v}\right\rangle
\text{\quad with\quad}\left\{
\begin{array}
[c]{l}%
\mathbf{D}_{\Sigma_{\mathcal{T}}^{m}}=\operatorname*{diag}\left[  d_{z}%
:z\in\Sigma_{\mathcal{T}}^{m}\right]  ,\\
d_{z}:=\left\vert \operatorname*{supp}b_{z,m}\right\vert ,
\end{array}
\right.
\]
where, for a measurable set $\omega\subset\mathbb{R}^{d}$, we denote by
$\left\vert \omega\right\vert $ its $d$-dimensional volume. The norm is given
by%
\[
\left\Vert u\right\Vert _{\mathcal{T}}:=\left(  u,u\right)  _{\mathcal{T}%
}^{1/2}.
\]
For later use, we define a localized version of $\mathbf{D}_{\Sigma
_{\mathcal{T}}^{m}}$. Let $\mathcal{N}\subset\Sigma_{\mathcal{T}}^{m}$ and
define the diagonal matrix $\mathbf{D}_{\mathcal{N}}=\operatorname{diag}%
\left[  d_{\mathcal{N},z}:z\in\Sigma_{\mathcal{T}}^{m}\right]  $ by%
\[
d_{\mathcal{N},z}:=\left\{
\begin{array}
[c]{ll}%
d_{z} & z\in\mathcal{N},\\
0 & z\in\Sigma_{\mathcal{T}}^{m}\backslash\mathcal{N}.
\end{array}
\right.
\]
We define the \textit{fine grid restriction operator} $R_{\mathcal{N}%
}:S_{\mathcal{T}}^{m}\rightarrow S_{\mathcal{T}}^{m}$ by%
\begin{equation}
R_{\mathcal{N}}= \mathbf{R}^{-1}\mathbf{D}_{\mathcal{N}} P^{-1}.
\label{defrn1}%
\end{equation}

\begin{remark}
Note that  the diagonal matrix $\mathbf{D}_{\mathcal{N}}$ corresponds to the matrix representation of $R_{\mathcal{N}}$:
\begin{equation}
\left(  R_{\mathcal{N}}P\mathbf{u},P\mathbf{v}\right)  =\left\langle
\mathbf{D}_{\mathcal{N}}\mathbf{u},\mathbf{v}\right\rangle =\sum
_{z\in\mathcal{N}}d_{z}u_{z}v_{z}. \label{symmR}%
\end{equation}
For the support of $R_{\mathcal{N}}u$ it holds%
\[
\operatorname*{supp}\left(  R_{\mathcal{N}}u\right)  \subset\Omega
_{\mathcal{N}}:=%
{\textstyle\bigcup\nolimits_{\substack{\tau\in\mathcal{T}\\\tau\cap
\mathcal{N}\neq\emptyset}}}
\tau.
\]
The operator $R_{\mathcal{N}}$ is symmetric positive semi-definite, which follows from $d_z \geq 0$ and the symmetry of
the right-hand side in (\ref{symmR}).
\end{remark}

We define \textit{conforming subspaces} of $V$ by%
\[
V_{\mathcal{T}}^{m}:=S_{\mathcal{T}}^{m}\cap V\text{.}%
\]

\begin{notation}
We write $S$ short for $V_{\mathcal{T}}^{m}$ if no confusion is possible.
Since $S=S_{\mathcal{T}}^{m}\cap V,$ we may assume that there is a subset
$\Sigma_{S}\subset\Sigma_{\mathcal{T}}^{m}$ such that $S=\operatorname{span}%
\left\{ b_{z,m}:z\in\Sigma_{S}\right\}  $.

The operators associated to the continuous and discrete bilinear form are the
linear mappings $A:V\rightarrow V^{\prime}$ and $A_{S}:S\rightarrow S$ defined
by%
\begin{align*}
\left\langle Au,v\right\rangle _{V^{\prime}\times V}  &  =a\left(  u,v\right)
\qquad\forall u,v\in V,\\
\left(  A_{S}u,v\right)   &  =a\left(  u,v\right)  \qquad\forall u,v\in S.
\end{align*}
Here $\left\langle \cdot,\cdot\right\rangle _{V^{\prime}\times V}$ is the
continuous extension of the $L^{2}\left(  \Omega\right)  $ scalar product to
the dual pairing $\left\langle \cdot,\cdot\right\rangle _{V^{\prime}\times V}$.
\end{notation}

\begin{example}
If homogeneous Dirichlet boundary conditions are imposed for the wave equation
we have $V:=H_{0}^{1}\left(  \Omega\right)  :=\left\{  u\in H^{1}\left(
\Omega\right)  \mid\left.  u\right\vert _{\partial\Omega}=0\right\}  $. The
nodal points $\Sigma_{\mathcal{T}}^{1}$ for the $\mathbb{P}_{1}$ finite
element space are the inner triangle vertices and $b_{z,1}$ is the usual
continuous, piecewise affine basis function for the nodal point $z$.
\end{example}

The semi-discrete wave equation then is given by: find $u_{S}:\left[
0,T\right]  \rightarrow S$ such that%
\begin{subequations}
\label{spacedisc}
\end{subequations}%
\begin{equation}
\left(  \ddot{u}_{S},v\right)  +a\left(  u_{S},v\right)  =F\left(  v\right)
\quad\forall v\in S,t>0 \tag{%
\ref{spacedisc}%
a}\label{spacedisca}%
\end{equation}
with initial conditions%
\begin{equation}
\left.
\begin{array}
[c]{c}%
\left(  u_{S}\left(  0\right)  ,w\right)  =\left(  u_{0},w\right) \\
\\
\left(  \dot{u}_{S}\left(  0\right)  ,w\right)  =\left(  v_{0},w\right)
\end{array}
\right\}  \quad\forall w\in S. \tag{%
\ref{spacedisc}%
b}\label{spacediscb}%
\end{equation}

\subsection{Discrete LTS-Galerkin FE Formulation}

Starting from the leap-frog based local time-stepping LTS-LF scheme from  \cite{DiazGrote09},
we now present the fully discrete space-time Galerkin FE formulation. First we let the (global) time-step
$\Delta t=T/N$ and denote by $u_{S}^{\left(n\right)  } = P \mathbf{u}_{S}^{\left(  n\right)  }$ the FE approximation at time $t_{n}=n\Delta t$ for the corresponding coefficient vector (nodal values) $\mathbf{u}_{S}^{\left(  n\right)  }\in \mathbb{R}^{\Sigma}$ .
Similarly we define the right-hand sides $f_{S}:\left[  0,T\right]  \rightarrow S$ and $f_{S}^{\left(  n\right)
}\in S$ by
\begin{equation}
\left(  f_{S},w\right)  =F\left(  w\right)  \quad\forall w\in S\quad
\text{and\quad}f_{S}^{\left(  n\right)  }:=f_{S}\left(  t_{n}\right)
\text{,}\label{deffnsn}%
\end{equation}
where again
$f_{S}^{\left(  n\right)  }=P\,\mathbf{f}_{S}^{\left(  n\right)  }$ with corresponding coefficients  $\mathbf{f}_{S}^{\left(  n\right)  }\in\mathbb{R}^{\Sigma}$.

Given the numerical
solution at times $t_{n-1}$ and $t_{n}$, the LTS-LF method then computes the numerical
solution of \eqref{spacedisc} at $t_{n+1}$ by using a smaller
time-step $\Delta\tau=\Delta t/p$ inside the regions of local refinement; here, $p \geq 2$ denotes
the "coarse" to "fine" mesh size ratio.
Clearly, if the maximal velocity in the coarse and the fine regions
differ significantly, the choice of $p$ should also reflect that variation
and instead denote the local CFL number ratio.
In the "fine" region, the right-hand side is also
evaluated at the intermediate times $t_{n+\frac{m}{p}} = t_{n}+m \Delta \tau $ and we let
\[f_{S,m}^{\left(  n\right)  }:=f_{S}\left(  t_{n}+\frac{m}{p}\Delta t\right), \mbox{   with     }
f_{S,m}^{\left(n\right)  }=P\,\mathbf{f}_{s,m}^{\left(  n\right)  }, \qquad 0\leq m\leq p.\]

In Algorithm~\ref{LTS-LF Galerkin FE Algorithm}, we list the full second-order LTS-LF Algorithm (\cite{DiazGrote09}, \cite[Alg. 1]{Grote_Mitkova})
for the sake of completeness. All computations in Steps 2 and 3 that involve the right-hand side $ \mathbf{f}_{S,m}^{\left(  n\right)}$ or the
stiffness matrix $\mathbf{A}$  only affect those degrees of freedom inside the
region of local refinement or directly adjacent to it. The successive updates of the
coarse unknowns involving $\mathbf{w}$ during sub-steps reduce
to a single standard LF step of size $\Delta t$ and, in fact, can be replaced by it.
In that sense, Algorithm~\ref{LTS-LF Galerkin FE Algorithm} yields a local time-stepping method. We remark that higher order
LTS-LF methods of arbitrarily high (even) accuracy were derived and implemented in \cite{DiazGrote09}.

%
%

\begin{algorithm} LTS-LF Galerkin FE Algorithm
\label{LTS-LF Galerkin FE Algorithm}
\qquad

\begin{enumerate}
\item Set $\mathbf{\tilde{u}}_{S,0}^{\left(  n\right)  }:=\mathbf{u}%
_{S}^{\left(  n\right)  }$ and compute $\mathbf{w}$ as
\[
\mathbf{w}=\mathbf{M}^{-1}\left(  \left(  \mathbf{M}-\mathbf{D}_{\mathcal{N}%
}\right)  \mathbf{f}_{S}^{\left(  n\right)  }-\mathbf{A}\left(  \mathbf{I}%
-\mathbf{M}^{-1}\mathbf{D}_{\mathcal{N}}\right)  \mathbf{u}_{S}^{\left(
n\right)  }\right)  .
\]

\item Compute
\[
\mathbf{\tilde{u}}_{S,1}^{\left(  n\right)  }=\mathbf{\tilde{u}}%
_{S,0}^{\left(  n\right)  }+\frac{1}{2}\left(  \frac{\Delta t}{p}\right)
^{2}\left(  \mathbf{w+M}^{-1}\left(  \mathbf{D}_{\mathcal{N}}\mathbf{f}%
_{S}^{\left(  n\right)  }-\mathbf{AM}^{-1}\mathbf{D}_{\mathcal{N}%
}\mathbf{\tilde{u}}_{S,0}^{\left(  n\right)  }\right)  \right)  .
\]

\item For $m=1,\ldots,p-1$, compute%

\begin{equation*}
\begin{aligned}
\mathbf{\tilde{u}}_{S,m+1}^{\left(  n\right)  }   = 2\mathbf{\tilde{u}}_{S,m}^{\left(  n\right)} -\mathbf{\tilde{u}}_{S,m-1}^{\left(  n\right)} & + \left(  \frac{\Delta t}{p}\right)  ^{2} \Bigg( \mathbf{w}  + \mathbf{M}^{-1}\bigg(
\frac{1}{2}\mathbf{D}_{\mathcal{N}}\left(  \mathbf{f}_{S,m}^{\left(  n\right)} +\mathbf{f}_{S,-m}^{\left(  n\right)  }\right) \\ & \qquad -  \mathbf{AM}^{-1} \mathbf{D}_{\mathcal{N}}\mathbf{\tilde{u}}_{S,m}^{\left(  n\right)} \bigg)\Bigg)
\end{aligned}
\end{equation*}

\item Compute%
\[
\mathbf{u}_{S}^{\left(  n+1\right)  }=-\mathbf{u}_{S}^{\left(  n-1\right)
}+2\mathbf{\tilde{u}}_{S,p}^{\left(  n\right)  }.
\]

\end{enumerate}
\end{algorithm}

Like the standard leap-frog method (without local time-stepping),
the LTS-LF Algorithm requires in principle the solution of a
linear system involving ${\mathbf M}$ at every time-step.
Although the mass matrix is sparse, positive definite, and well-conditioned
so that solving linear systems with this matrix is relatively cheap,
this computational effort is commonly avoided by using
either mass-lumping techniques \cite{CJRT01,Mul01}, spectral elements \cite{CHQZ06,KS05}
or discontinuous Galerkin finite elements \cite{ABCM02,GSS06}. The resulting LTS-LF scheme
is then fully explicit.

In \cite{DiazGrote09}, the above LTS-LF Algorithm was rewritten
in \textquotedblleft leap-frog manner\textquotedblright\ by introducing the
perturbed bilinear form $a_{p}:S\times S\rightarrow\mathbb{R}$:
\begin{equation}
a_{p}\left(  u,v\right)  :=a\left(  u,v\right)  -\frac{2}{p^{2}}\sum
_{j=1}^{p-1}\alpha_{j}^{p}\left(  \frac{\Delta t}{p}\right)  ^{2j}a\left(
\left(  R_{\mathcal{N}}A_{S}\right)  ^{j}u,v\right)  \qquad\forall u,v\in
S\label{defapv}%
\end{equation}
with associated operator%
\begin{equation}
A_{S,p}:S\rightarrow S,\qquad A_{S,p}:=A_{S}-\frac{2}{p^{2}}\sum_{j=1}%
^{p-1}\alpha_{j}^{p}\left(  \frac{\Delta t}{p}\right)  ^{2j}A_{S}\left(
R_{\mathcal{N}}A_{S}\right)  ^{j}.\label{defASp}%
\end{equation}
Here the constants $\alpha_{j}^{m}$, $j =1, \dots, m-1$ are recursively defined for $m\geq 2$ by%
\begin{equation}%
\begin{array}
[c]{ccc}%
\alpha_{1}^{2}=\frac{1}{2} & \alpha_{1}^{3}=3, & \alpha_{2}^{3}=-\frac{1}{2}\\ \\
\multicolumn{1}{r}{\alpha_{1}^{m+1}} & \multicolumn{1}{l}{=\frac{m^{2}}%
{2}+2\alpha_{1}^{m}-\alpha_{1}^{m-1},} & \multicolumn{1}{l}{}\\
\multicolumn{1}{r}{\alpha_{j}^{m+1}} & \multicolumn{1}{l}{=2\alpha_{j}%
^{m}-\alpha_{j}^{m-1}-\alpha_{j-1}^{m},} & \multicolumn{1}{l}{j=2,\ldots
,m-2,}\\
\multicolumn{1}{r}{\alpha_{m-1}^{m+1}} & \multicolumn{1}{l}{=2\alpha_{m-1}%
^{m}-\alpha_{m-2}^{m},} & \multicolumn{1}{l}{}\\
\multicolumn{1}{r}{\alpha_{m}^{m+1}} & \multicolumn{1}{l}{=-\alpha_{m-1}^{m}.}
& \multicolumn{1}{l}{}%
\end{array}
\label{rekalpha}%
\end{equation}
Then the LTS-LF scheme (Algorithm~\ref{LTS-LF Galerkin FE Algorithm}) is equivalent to
\begin{equation}%
\begin{array}
[c]{cc}%
\left(  u_{S}^{\left(  n+1\right)  }-2u_{S}^{\left(  n\right)  }%
+u_{S}^{\left(  n-1\right)  },w\right)  +\Delta t^{2}a_{p}\left(
u_{S}^{\left(  n\right)  },w\right)  =\Delta t^{2}\left(  f_{S}^{\left(
n\right)  },w\right)   & \forall w\in S,\\
\multicolumn{1}{r}{\left.
\begin{array}
[c]{l}%
\left(  u_{S}^{\left(  0\right)  },w\right)  =\left(  u_{0},w\right)  \\
\\
\left(  u_{S}^{\left(  1\right)  },w\right)  =\left(  u_{0},w\right)  +\Delta
t\left(  v_{0},w\right)  +\frac{\Delta t^{2}}{2}\left(  f_{S}^{\left(
0\right)  }\left(  w\right)  -a\left(  u_{0},w\right)  \right)
\end{array}
\right\}  } & \multicolumn{1}{r}{\forall w\in S.}%
\end{array}
\label{leap_frog_lts_fd}%
\end{equation}
Neither the equivalent formulation \eqref{leap_frog_lts_fd} nor the constants $\alpha_{j}^{m}$ are ever used in practice but only for the purpose of analysis; in fact, the constants $\alpha_{j}^{m}$ do not
appear in Algorithm 1.

\begin{remark}
In (\ref{leap_frog_lts_fd}) the term $a\left(  u_{0},w\right)  $ in
the third equation could be replaced by $a_{p}\left(  u_{0},w\right)  $ which allows for
local time-stepping already during the very first time-step. In that case,
the analysis below also applies but requires a minor change,
namely, replacing $A_{S}$ by $A_{S,p}$ in (\ref{initerr1}) and (\ref{initerr2}).
This modification neither affects the stability nor the convergence rate of the overall LTS-LF scheme.
%
\end{remark}

\section{Stability and Convergence Analysis\label{SecStabCons}}

\subsection{Estimates of the Bilinearform\label{SecSetting}}

The following equivalence of the continuous $L^{2}\left(  \Omega\right)  $-
and mesh-dependent norm is well known.

\begin{lemma}
\label{Satz9.1}$\left\Vert \cdot\right\Vert _{\mathcal{T}}$ and $\left\Vert
\cdot\right\Vert $ are equivalent norms on $S_{\mathcal{T}}^{m}$. The
constants $c_{\operatorname*{eq}}$, $C_{\operatorname*{eq}}$ in the
equivalence estimates
\[
c_{\operatorname*{eq}}\left\Vert u\right\Vert _{\mathcal{T}}\leq\left\Vert
u\right\Vert \leq C_{\operatorname*{eq}}\left\Vert u\right\Vert _{\mathcal{T}%
}\qquad\forall u\in S_{\mathcal{T}}^{m}%
\]
only depend on the polynomial degree $m$ and the shape regularity constant
$\gamma\left(  \mathcal{T}\right)  $.
\end{lemma}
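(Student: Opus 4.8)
The plan is to reduce the claimed equivalence to the reference element via an affine change of variables and then invoke equivalence of norms on the finite-dimensional space $\mathbb{P}_m$; the estimate is in fact elementwise, so the global bound follows by summation.

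First I would fix $\tau\in\mathcal{T}$ and $u\in S_{\mathcal{T}}^{m}$ and set $\hat{u}:=\left(u|_{\tau}\right)\circ\phi_{\tau}\in\mathbb{P}_{m}$. Since $\phi_{\tau}$ is affine, $\phi_{\tau}(\hat{x})=B_{\tau}\hat{x}+b_{\tau}$ with $\left\vert\det B_{\tau}\right\vert=\left\vert\tau\right\vert/\left\vert\hat{\tau}\right\vert=d!\left\vert\tau\right\vert$, the change of variables formula gives $\left\Vert u\right\Vert_{L^{2}(\tau)}^{2}=d!\left\vert\tau\right\vert\left\Vert\hat{u}\right\Vert_{L^{2}(\hat{\tau})}^{2}$. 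Expanding $\hat{u}=\sum_{z\in\hat{\Sigma}^{m}}u_{\phi_{\tau}(z)}\,\hat{b}_{z,m}$ in the Lagrange nodal basis of $\mathbb{P}_{m}$ on $\hat{\tau}$ — whose coefficients $u_{\phi_{\tau}(z)}$ are exactly the global nodal values of $u$ at the lifted points $\Sigma_{\tau}^{m}$ — and using that on the finite-dimensional space $\mathbb{P}_{m}$ the $L^{2}(\hat{\tau})$-norm and the Euclidean norm of the coefficient vector are equivalent, there are constants $0<\hat{c}_{m}\leq\hat{C}_{m}$ depending only on $m$ (and on the fixed choice of $\hat{\Sigma}^{m}$) such that $\hat{c}_{m}^{2}\sum_{z\in\hat{\Sigma}^{m}}\left\vert u_{\phi_{\tau}(z)}\right\vert^{2}\leq\left\Vert\hat{u}\right\Vert_{L^{2}(\hat{\tau})}^{2}\leq\hat{C}_{m}^{2}\sum_{z\in\hat{\Sigma}^{m}}\left\vert u_{\phi_{\tau}(z)}\right\vert^{2}$. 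Combining the two displays yields the elementwise two-sided bound $d!\,\hat{c}_{m}^{2}\left\vert\tau\right\vert\sum_{z\in\Sigma_{\tau}^{m}}\left\vert u_{z}\right\vert^{2}\leq\left\Vert u\right\Vert_{L^{2}(\tau)}^{2}\leq d!\,\hat{C}_{m}^{2}\left\vert\tau\right\vert\sum_{z\in\Sigma_{\tau}^{m}}\left\vert u_{z}\right\vert^{2}$.

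Finally I would sum over $\tau\in\mathcal{T}$. Since $\left\Vert u\right\Vert^{2}=\sum_{\tau\in\mathcal{T}}\left\Vert u\right\Vert_{L^{2}(\tau)}^{2}$ and, by definition, $\left\Vert u\right\Vert_{\mathcal{T}}^{2}=\sum_{\tau\in\mathcal{T}}\left\vert\tau\right\vert\sum_{z\in\Sigma_{\tau}^{m}}\left\vert u_{z}\right\vert^{2}$, the elementwise bound immediately gives $c_{\operatorname{eq}}\left\Vert u\right\Vert_{\mathcal{T}}\leq\left\Vert u\right\Vert\leq C_{\operatorname{eq}}\left\Vert u\right\Vert_{\mathcal{T}}$ with $c_{\operatorname{eq}}:=\sqrt{d!}\,\hat{c}_{m}$ and $C_{\operatorname{eq}}:=\sqrt{d!}\,\hat{C}_{m}$, which depend only on $m$ and $d\in\{1,2,3\}$, hence in particular only on $m$ and $\gamma(\mathcal{T})$.

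There is no genuine obstacle here: the only nontrivial ingredient is the equivalence of the $L^{2}(\hat{\tau})$-norm with the Euclidean norm of coefficient vectors on the finite-dimensional space $\mathbb{P}_{m}$, which is the standard finite-dimensionality argument. The one point deserving a word of care is that the nodal value $u_{z}$ attached to a global node $z\in\Sigma_{\mathcal{T}}^{m}$ is independent of which simplex $\tau\ni z$ it is viewed from — this is precisely the conformity (continuity) built into the Lagrange basis, and it is what makes the termwise summation over $\mathcal{T}$ consistent. Note also that, since the pullbacks $\phi_{\tau}$ are affine and $\hat{\tau}$ is fixed, the scaling is exact and no mesh regularity is actually needed for this particular estimate; shape regularity enters only later, e.g.\ through inverse inequalities.
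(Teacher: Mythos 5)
Your argument is correct and complete. Note that the paper itself offers no proof of this lemma — it is simply stated as "well known" — and your scaling argument (affine pullback to the fixed reference simplex, exact $L^{2}$ scaling by $\left\vert\det B_{\tau}\right\vert = d!\left\vert\tau\right\vert$, equivalence of the $L^{2}(\hat{\tau})$-norm with the Euclidean norm of the nodal vector on the finite-dimensional space $\mathbb{P}_{m}$, then summation over $\mathcal{T}$) is precisely the standard justification the authors are implicitly invoking; the constants $\hat{c}_{m},\hat{C}_{m}$ are the extreme eigenvalues of the reference-element mass matrix and depend only on $m$, $d$ and the fixed nodal set $\hat{\Sigma}^{m}$. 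Your closing observation that shape regularity is not actually needed here (only the exact volume scaling of the affine map) is also correct and is consistent with — indeed slightly stronger than — the dependence stated in the lemma.
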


It is also well known that the functions in $S_{\mathcal{T}}^{m}$ satisfy an
inverse inequality (for a proof we refer, e.g., \cite[(3.2.33) with $m=1$,
$q=r=2$, $l=0$, $n=d$.]{CiarletPb}\footnote{There is a misprint in this
reference: $m-1$ should be replaced by $m-\ell$, see also \cite[(4.5.3)
Lemma]{scottbrenner3}.}).

\begin{lemma}
\label{Leminvineq}There exists a constant $C_{\operatorname*{inv}}>0$, which
only depends on $\gamma\left(  \mathcal{T}\right)  $ and $m$, such that for
all $\tau\in\mathcal{T}$%
\begin{equation}
\left\Vert \nabla u\right\Vert _{L^{2}\left(  \tau\right)  }\leq
C_{\operatorname*{inv}}h_{\tau}^{-1}\left\Vert u\right\Vert _{L^{2}\left(
\tau\right)  },\qquad\forall u\in\mathcal{S}_{\mathcal{T}}^{m}.
\label{defcinv}%
\end{equation}
The global versions of the inverse inequality involves also the
quasi-uniformity constant%
\begin{equation}
\left\Vert \nabla u\right\Vert \leq C_{\operatorname*{inv}}%
C_{\operatorname*{qu}}h^{-1}\left\Vert u\right\Vert \quad\text{and\quad
}\left\Vert u\right\Vert _{H^{1}\left(  \Omega\right)  }\leq\sqrt
{1+C_{\operatorname*{inv}}^{2}C_{\operatorname*{qu}}^{2}h^{-2}}\,\left\Vert
u\right\Vert \label{globinveq}%
\end{equation}
for all $u\in S_{\mathcal{T}}^{m}$.
\end{lemma}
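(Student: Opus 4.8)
The plan is to reduce the local estimate \eqref{defcinv} to an equivalence of norms on the fixed, finite-dimensional space $\mathbb{P}_m$ over the reference element $\hat{\tau}$, and then to transfer it to a general simplex $\tau\in\mathcal{T}$ via the affine pullback $\phi_\tau:\hat{\tau}\to\tau$, $\phi_\tau(\hat{x})=B_\tau\hat{x}+b_\tau$. First I would note that, since $\mathbb{P}_m$ is finite-dimensional, all norms on it are equivalent; in particular there is a constant $\hat{C}$ depending only on $m$ and $d$ such that $\|\hat{\nabla}\hat{v}\|_{L^2(\hat{\tau})}\le\hat{C}\,\|\hat{v}\|_{L^2(\hat{\tau})}$ for all $\hat{v}\in\mathbb{P}_m$, where $\hat{\nabla}$ is the gradient in the reference coordinates. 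Writing $u|_\tau=\hat{v}\circ\phi_\tau^{-1}$ with $\hat{v}\in\mathbb{P}_m$, the chain rule gives $\nabla u=B_\tau^{-T}\hat{\nabla}\hat{v}$ (understood pointwise under $\phi_\tau$), and the change of variables formula yields $\|u\|_{L^2(\tau)}^2=|\det B_\tau|\,\|\hat{v}\|_{L^2(\hat{\tau})}^2$ together with $\|\nabla u\|_{L^2(\tau)}^2\le|\det B_\tau|\,\|B_\tau^{-1}\|^2\,\|\hat{\nabla}\hat{v}\|_{L^2(\hat{\tau})}^2$. Standard shape-regularity bounds give $\|B_\tau^{-1}\|\le\hat{h}\,\rho_\tau^{-1}\le\hat{h}\,\gamma(\mathcal{T})\,h_\tau^{-1}$ for $d=2,3$ (where $\hat{h}=\operatorname*{diam}\hat{\tau}$), while for $d=1$ this is immediate since $\tau$ is an interval with $h_\tau=\rho_\tau$. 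Combining the three displayed facts gives \eqref{defcinv} with $C_{\operatorname*{inv}}=\hat{C}\hat{h}\,\gamma(\mathcal{T})$, depending only on $m$ and $\gamma(\mathcal{T})$.

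Next I would square \eqref{defcinv}, sum over $\tau\in\mathcal{T}$, and use $h_\tau\ge h_{\min}=h/C_{\operatorname*{qu}}$:
\[
\|\nabla u\|^2=\sum_{\tau\in\mathcal{T}}\|\nabla u\|_{L^2(\tau)}^2\le C_{\operatorname*{inv}}^2\sum_{\tau\in\mathcal{T}}h_\tau^{-2}\,\|u\|_{L^2(\tau)}^2\le C_{\operatorname*{inv}}^2 C_{\operatorname*{qu}}^2 h^{-2}\,\|u\|^2,
\]
which is the first global estimate in \eqref{globinveq}. The second one is then purely algebraic: $\|u\|_{H^1(\Omega)}^2=\|u\|^2+\|\nabla u\|^2\le(1+C_{\operatorname*{inv}}^2 C_{\operatorname*{qu}}^2 h^{-2})\,\|u\|^2$, and taking square roots gives the claim.

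I do not expect a genuine obstacle here: the statement is classical and is precisely the content of the references cited just above the lemma. The only point requiring care is the bookkeeping of constant dependencies — verifying that $\hat{C}$ depends on nothing but $m$ and $d$, and that the Jacobian estimates introduce only the shape-regularity constant $\gamma(\mathcal{T})$, so that \eqref{defcinv} is genuinely local (element-by-element) and the quasi-uniformity constant $C_{\operatorname*{qu}}$ enters only at the global summation step. One could equally well simply cite \cite{CiarletPb} or \cite{scottbrenner3} for \eqref{defcinv} and carry out only the short passage to the global bounds in \eqref{globinveq}.
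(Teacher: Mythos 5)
Your proposal is correct: the paper gives no proof of this lemma, simply citing Ciarlet and Brenner--Scott for the local estimate \eqref{defcinv}, and your reference-element scaling argument (norm equivalence on $\mathbb{P}_m$, affine pullback, $\Vert B_\tau^{-1}\Vert\le \hat h\,\rho_\tau^{-1}\le \hat h\,\gamma(\mathcal{T})h_\tau^{-1}$) is exactly the classical argument behind those citations. Your passage to the global bounds, summing the squared local estimates with $h_\tau^{-1}\le C_{\operatorname{qu}}h^{-1}$ and then using $\Vert u\Vert_{H^1(\Omega)}^2=\Vert u\Vert^2+\Vert\nabla u\Vert^2$, is precisely the intended routine step, with the constant dependencies ($\gamma(\mathcal{T})$, $m$ locally; $C_{\operatorname{qu}}$ only globally) correctly tracked.
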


In the next step, we will estimate $\left\Vert A_{S}u\right\Vert $ in terms of
$\left\Vert u\right\Vert _{H^{1}\left(  \Omega\right)  }$.

\begin{lemma}
\label{LemAsu}It holds%
\begin{equation}
\left\Vert A_{S}u\right\Vert \leq C_{\operatorname*{cont}}\sqrt
{1+C_{\operatorname*{inv}}^{2}C_{\operatorname*{qu}}^{2}h^{-2}}\left\Vert
u\right\Vert _{H^{1}\left(  \Omega\right)  }\qquad\forall u\in S.
\label{estAs}%
\end{equation}

\end{lemma}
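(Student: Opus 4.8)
The plan is to use a straightforward duality (self-testing) argument: since $A_{S}u\in S\subset S_{\mathcal{T}}^{m}$, I can test the defining relation $(A_{S}u,v)=a(u,v)$ with the admissible choice $v=A_{S}u$. This gives
\[
\left\Vert A_{S}u\right\Vert ^{2}=(A_{S}u,A_{S}u)=a(u,A_{S}u),
\]
and now the continuity estimate \eqref{wellposedb} of the bilinear form yields
\[
\left\Vert A_{S}u\right\Vert ^{2}\leq C_{\operatorname*{cont}}\left\Vert u\right\Vert _{H^{1}(\Omega)}\left\Vert A_{S}u\right\Vert _{H^{1}(\Omega)}.
\]

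Next I would invoke the global inverse inequality. Because $A_{S}u$ lies in the finite element space $S_{\mathcal{T}}^{m}$, the second estimate in \eqref{globinveq} applies and gives
\[
\left\Vert A_{S}u\right\Vert _{H^{1}(\Omega)}\leq\sqrt{1+C_{\operatorname*{inv}}^{2}C_{\operatorname*{qu}}^{2}h^{-2}}\,\left\Vert A_{S}u\right\Vert .
\]
Substituting this into the previous bound, I obtain
\[
\left\Vert A_{S}u\right\Vert ^{2}\leq C_{\operatorname*{cont}}\sqrt{1+C_{\operatorname*{inv}}^{2}C_{\operatorname*{qu}}^{2}h^{-2}}\,\left\Vert u\right\Vert _{H^{1}(\Omega)}\left\Vert A_{S}u\right\Vert .
\]

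To finish, I would distinguish the trivial case $A_{S}u=0$ (where \eqref{estAs} holds with both sides related as $0\le\text{nonneg}$) from the case $\left\Vert A_{S}u\right\Vert >0$, in which dividing by $\left\Vert A_{S}u\right\Vert$ yields exactly the claimed inequality \eqref{estAs}. There is no real obstacle here; the only points requiring a word of care are that the test function $v=A_{S}u$ is genuinely in $S$ (so that the Galerkin identity $(A_{S}u,v)=a(u,v)$ may be applied) and that $A_{S}u\in S_{\mathcal{T}}^{m}$ (so that Lemma~\ref{Leminvineq} is applicable), both of which are immediate from $S=V_{\mathcal{T}}^{m}\subset S_{\mathcal{T}}^{m}$ and the definition of $A_{S}$.
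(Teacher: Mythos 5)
Your proof is correct, and it takes a genuinely different (and more elementary) route than the paper. The paper proves \eqref{estAs} by diagonalizing the self-adjoint positive operator $A_{S}$: it expands $v$ in an $L^{2}$-orthonormal eigenbasis, bounds the largest eigenvalue $\lambda_{M}$ through the Rayleigh quotient using continuity \eqref{wellposedb} and the inverse inequality \eqref{defcinv}, and then estimates $\left\Vert A_{S}v\right\Vert ^{2}=\sum_{\mu}\lambda_{\mu}^{2}c_{\mu}^{2}\leq\lambda_{M}\,a(v,v)\leq C_{\operatorname*{cont}}^{2}\bigl(1+C_{\operatorname*{inv}}^{2}C_{\operatorname*{qu}}^{2}h^{-2}\bigr)\left\Vert v\right\Vert _{H^{1}(\Omega)}^{2}$. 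You instead test the defining identity with $v=A_{S}u\in S$ and move the mesh-dependent factor onto $A_{S}u$ via the global inverse inequality \eqref{globinveq}, then divide; this produces exactly the same constant. What your argument buys is simplicity and slightly greater generality: it needs neither the spectral theorem nor the symmetry \eqref{wellposeda} of $a(\cdot,\cdot)$, only continuity and the inverse estimate, and the small cases you flag ($A_{S}u=0$, membership of $A_{S}u$ in $S\subset S_{\mathcal{T}}^{m}$) are handled correctly. What the paper's spectral formulation buys is a viewpoint (eigen-expansions of discrete operators and the intermediate norms $\vert\kern-.1em\vert\kern-.1em\vert\cdot\vert\kern-.1em\vert\kern-.1em\vert_{s}$) that is in the same spirit as the later eigenfunction-based arguments, e.g.\ Corollary \ref{Coreigsys} and the stability proof of Theorem \ref{TheoStabLeapFrog}, but it is not logically necessary for this particular lemma.
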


%

\proof
Since $A_{S}$ is a self-adjoint, positive operator there exists an orthonormal
system $\left(  \eta_{\nu}\right)  _{\nu=1}^{M}$ such that%
\[
A_{S}\eta_{\nu}=\lambda_{\nu}\eta_{\nu}%
\]
and%
\[
\left(  \eta_{\nu},\eta_{\mu}\right)  =\delta_{\nu,\mu}%
\]
where $M:=\dim S$. Hence, every function $v\in S$ has a representation%
\[
v=\sum_{\nu=1}^{M}c_{\nu}\eta_{\nu}.
\]

For $s\in\mathbb{R}$ we define the norm on $S$%
\[
\left\vert
\kern-.1em%
\left\vert
\kern-.1em%
\left\vert v\right\vert
\kern-.1em%
\right\vert
\kern-.1em%
\right\vert _{s}:=\left\{  \sum_{\mu=1}^{M}\lambda_{\mu}^{s}c_{\mu}%
^{2}\right\}  ^{1/2}.
\]
It is obvious that for all $v\in S$, it holds%
\begin{align*}
\left\vert
\kern-.1em%
\left\vert
\kern-.1em%
\left\vert v\right\vert
\kern-.1em%
\right\vert
\kern-.1em%
\right\vert _{0}  &  =\left\Vert v\right\Vert ,\\
\left\vert
\kern-.1em%
\left\vert
\kern-.1em%
\left\vert v\right\vert
\kern-.1em%
\right\vert
\kern-.1em%
\right\vert _{1}  &  =a\left(  v,v\right)  ^{1/2}\lesseqgtr\left\{
\begin{array}
[c]{c}%
C_{\operatorname*{cont}}^{1/2}\left\Vert v\right\Vert _{H^{1}\left(
\Omega\right)  },\\
c_{\operatorname*{coer}}^{1/2}\left\Vert v\right\Vert _{H^{1}\left(
\Omega\right)  }.
\end{array}
\right.
\end{align*}
Note that
%

\[
{\left\vert
\kern-.1em%
\left\vert
\kern-.1em%
\left\vert v\right\vert
\kern-.1em%
\right\vert
\kern-.1em%
\right\vert _{2}^{2}:=\sum_{\mu=1}^{M}\lambda_{\mu}^{2}c_{\mu}^{2}=\sum
_{\mu,\nu=1}^{M}\lambda_{\mu}c_{\mu}\lambda_{\nu}c_{\nu}\left(  \eta_{\mu
},\eta_{\nu}\right)  =\left(  A_{S}v,A_{S}v\right)  .}%
\]

We assume that the eigenvalues $\lambda_{\nu}$ are ordered increasingly. From
Lemma \ref{Leminvineq} we conclude that{%
\[
\lambda_{M}:=\max_{u\in S\backslash\left\{  0\right\}  }\frac{a\left(
u,u\right)  }{\left(  u,u\right)  }\leq C_{\operatorname*{cont}}\max_{u\in
S\backslash\left\{  0\right\}  }\frac{\left\Vert u\right\Vert _{H^{1}\left(
\Omega\right)  }^{2}}{\left\Vert u\right\Vert ^{2}}\overset
{\text{(\ref{defcinv})}}{\leq}C_{\operatorname*{cont}}\left(
1+C_{\operatorname*{inv}}^{2}C_{\operatorname*{qu}}^{2}h^{-2}\right)
\]
holds. Hence,%
\[
\left\Vert A_{S}v\right\Vert ^{2}\leq C_{\operatorname*{cont}}\left(
1+C_{\operatorname*{inv}}^{2}C_{\operatorname*{qu}}^{2}h^{-2}\right)
\sum_{\mu=1}^{M}\lambda_{\mu}c_{\mu}^{2}\leq C_{\operatorname*{cont}}%
^{2}\left(  1+C_{\operatorname*{inv}}^{2}C_{\operatorname*{qu}}^{2}%
h^{-2}\right)  \left\Vert v\right\Vert _{H^{1}\left(  \Omega\right)  }^{2}.
\]%

\smallskip

\endproof
}

\medskip

Next, we will estimate the bilinear form $a_{p}\left(  \cdot,\cdot\right)  $.

\begin{lemma}
\label{LemEstRN}The operator $R_{\mathcal{N}}$ as in (\ref{defrn1}) has
bounded $L^{2}\left(  \Omega\right)  $ norm:%
\begin{equation}
\left\Vert R_{\mathcal{N}}u\right\Vert \leq c_{\operatorname*{eq}}%
^{-2}\left\Vert u\right\Vert \qquad\forall u\in\mathcal{S}_{\mathcal{T}}^{m}.
\label{Rnest}%
\end{equation}
For $u\in\mathcal{S}_{\mathcal{T}}^{m}$ it holds%
\begin{equation}
\left\Vert   R_{\mathcal{N}}A_{S}u\right\Vert \leq
\frac{C_{\operatorname*{cont}}}{c_{\operatorname*{eq}}^{2}}
\left(  1+\frac{C_{\operatorname*{inv}}^{2}C_{\operatorname*{qu}}^{2}}{h^{2}%
}\right)\left\Vert u\right\Vert .
  \label{estpowerop}%
\end{equation}

\end{lemma}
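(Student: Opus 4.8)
The plan is to prove the two estimates in sequence, since the second one will follow directly from the first combined with Lemma \ref{LemAsu} and the global inverse inequality. For the first bound \eqref{Rnest}, the starting point is the identity \eqref{symmR} together with the equivalence of norms from Lemma \ref{Satz9.1}. The idea is to exploit that $R_{\mathcal{N}}$ has the diagonal matrix representation $\mathbf{D}_{\mathcal{N}}$ in the $\left(\cdot,\cdot\right)_{\mathcal{T}}$-inner product, and that $\mathbf{D}_{\mathcal{N}}$ is obtained from $\mathbf{D}_{\Sigma_{\mathcal{T}}^{m}}$ simply by zeroing out some diagonal entries — hence it is a ``contraction'' in the $\left\Vert\cdot\right\Vert_{\mathcal{T}}$-norm. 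Concretely, for $u=P\mathbf{u}\in S_{\mathcal{T}}^{m}$ one computes $\left\Vert R_{\mathcal{N}}u\right\Vert_{\mathcal{T}}^{2}=\left\langle\mathbf{D}_{\Sigma_{\mathcal{T}}^{m}}^{-1}\mathbf{D}_{\mathcal{N}}\mathbf{u},\mathbf{D}_{\mathcal{N}}\mathbf{u}\right\rangle$ — or more cleanly, since $R_{\mathcal{N}}$ is self-adjoint and idempotent-like with respect to $\left(\cdot,\cdot\right)_{\mathcal{T}}$ in the sense that its matrix $\mathbf{D}_{\Sigma}^{-1}\mathbf{D}_{\mathcal{N}}$ is a diagonal projector, we get $\left\Vert R_{\mathcal{N}}u\right\Vert_{\mathcal{T}}\leq\left\Vert u\right\Vert_{\mathcal{T}}$. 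Then sandwiching with Lemma \ref{Satz9.1} gives $\left\Vert R_{\mathcal{N}}u\right\Vert\leq C_{\operatorname*{eq}}\left\Vert R_{\mathcal{N}}u\right\Vert_{\mathcal{T}}\leq C_{\operatorname*{eq}}\left\Vert u\right\Vert_{\mathcal{T}}\leq (C_{\operatorname*{eq}}/c_{\operatorname*{eq}})\left\Vert u\right\Vert$. This is slightly worse than the stated $c_{\operatorname*{eq}}^{-2}$, so I would instead argue directly via \eqref{symmR}: bound $\left(R_{\mathcal{N}}u,v\right)=\left\langle\mathbf{D}_{\mathcal{N}}\mathbf{u},\mathbf{v}\right\rangle\leq\left\langle\mathbf{D}_{\Sigma}\mathbf{u},\mathbf{u}\right\rangle^{1/2}\left\langle\mathbf{D}_{\Sigma}\mathbf{v},\mathbf{v}\right\rangle^{1/2}=\left\Vert u\right\Vert_{\mathcal{T}}\left\Vert v\right\Vert_{\mathcal{T}}$, test with $v=R_{\mathcal{N}}u$, use $\left\Vert R_{\mathcal{N}}u\right\Vert_{\mathcal{T}}\leq c_{\operatorname*{eq}}^{-1}\left\Vert R_{\mathcal{N}}u\right\Vert$ and $\left\Vert u\right\Vert_{\mathcal{T}}\leq c_{\operatorname*{eq}}^{-1}\left\Vert u\right\Vert$, and cancel one factor of $\left\Vert R_{\mathcal{N}}u\right\Vert$ to land on $c_{\operatorname*{eq}}^{-2}\left\Vert u\right\Vert$.

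For the second estimate \eqref{estpowerop}, the plan is simply to chain the bounds: apply \eqref{Rnest} with $u$ replaced by $A_{S}u$ to get $\left\Vert R_{\mathcal{N}}A_{S}u\right\Vert\leq c_{\operatorname*{eq}}^{-2}\left\Vert A_{S}u\right\Vert$, then invoke Lemma \ref{LemAsu}, namely $\left\Vert A_{S}u\right\Vert\leq C_{\operatorname*{cont}}\sqrt{1+C_{\operatorname*{inv}}^{2}C_{\operatorname*{qu}}^{2}h^{-2}}\left\Vert u\right\Vert_{H^{1}(\Omega)}$, and finally use the global inverse inequality \eqref{globinveq}, $\left\Vert u\right\Vert_{H^{1}(\Omega)}\leq\sqrt{1+C_{\operatorname*{inv}}^{2}C_{\operatorname*{qu}}^{2}h^{-2}}\left\Vert u\right\Vert$, to convert the $H^{1}$-norm back to an $L^{2}$-norm at the cost of another factor. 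Multiplying the two square roots produces exactly the factor $\left(1+C_{\operatorname*{inv}}^{2}C_{\operatorname*{qu}}^{2}h^{-2}\right)$ appearing in \eqref{estpowerop}, and the constants combine to $C_{\operatorname*{cont}}/c_{\operatorname*{eq}}^{2}$.

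I do not expect any serious obstacle here; the only mild subtlety is getting the sharp constant $c_{\operatorname*{eq}}^{-2}$ rather than the cruder $C_{\operatorname*{eq}}/c_{\operatorname*{eq}}$ in \eqref{Rnest}, which is why the bilinear-form/Cauchy–Schwarz route through \eqref{symmR} is preferable to bounding $\left\Vert R_{\mathcal{N}}\right\Vert$ as an operator on $\left(S_{\mathcal{T}}^{m},\left\Vert\cdot\right\Vert_{\mathcal{T}}\right)$ and then transferring norms twice. One should also note that $R_{\mathcal{N}}A_{S}$ maps into $S_{\mathcal{T}}^{m}$ (indeed into functions supported in $\Omega_{\mathcal{N}}$), so that applying \eqref{Rnest} to the argument $A_{S}u$ is legitimate — this requires $A_{S}u\in S\subset S_{\mathcal{T}}^{m}$, which holds by definition of $A_{S}$.
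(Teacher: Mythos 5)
Your proposal is correct and follows essentially the same route as the paper: the key step is the Cauchy--Schwarz bound $\left\langle \mathbf{D}_{\mathcal{N}}\mathbf{u},\mathbf{v}\right\rangle \leq \left\Vert u\right\Vert_{\mathcal{T}}\left\Vert v\right\Vert_{\mathcal{T}}$ (using $d_z\geq 0$ to pass from $\mathcal{N}$ to $\Sigma_{\mathcal{T}}^{m}$) combined with Lemma \ref{Satz9.1}, and then \eqref{estpowerop} by chaining \eqref{Rnest}, Lemma \ref{LemAsu} and \eqref{globinveq}. The only cosmetic difference is that the paper takes the supremum over all $v\in S_{\mathcal{T}}^{m}$ in the dual characterization of $\left\Vert R_{\mathcal{N}}u\right\Vert$, whereas you test with $v=R_{\mathcal{N}}u$ and cancel one factor, which is the same argument.
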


%

\begin{proof}
Let $u=P\mathbf{u}$ and $v=P\mathbf{v}$ with $\mathbf{u}=\left(  u_{z}\right)
_{z\in\Sigma_{\mathcal{T}}^{m}}$, $\mathbf{v}=\left(  v_{z}\right)
_{z\in\Sigma_{\mathcal{T}}^{m}}$. We employ%
\[
\left(  R_{\mathcal{N}}u,v\right)  =\left\langle \mathbf{D}_{\mathcal{N}%
}\mathbf{u},\mathbf{v}\right\rangle =\sum_{z\in\mathcal{N}}d_{z}u_{z}v_{z}.
\]
Hence%
\begin{align*}
\left\Vert R_{\mathcal{N}}u\right\Vert  &  =\sup_{v\in\mathcal{S}%
_{\mathcal{T}}^{m}\backslash\left\{  0\right\}  }\frac{\sum_{z\in\mathcal{N}%
}d_{z}u_{z}v_{z}}{\left\Vert v\right\Vert }\leq\sup_{v\in\mathcal{S}%
_{\mathcal{T}}^{m}\backslash\left\{  0\right\}  }\frac{\sum_{z\in\mathcal{N}%
}d_{z}\left\vert u_{z}\right\vert \left\vert v_{z}\right\vert }{\left\Vert
v\right\Vert }\\
&  \leq\sup_{v\in\mathcal{S}_{\mathcal{T}}^{m}\backslash\left\{  0\right\}
}\frac{\left\langle \mathbf{D}_{\Sigma_{\mathcal{T}}^{m}}\mathbf{u}%
,\mathbf{u}\right\rangle ^{1/2}\left\langle \mathbf{D}_{\Sigma_{\mathcal{T}%
}^{m}}\mathbf{v},\mathbf{v}\right\rangle ^{1/2}}{\left\Vert v\right\Vert
}=\left\Vert u\right\Vert _{\mathcal{T}}\sup_{v\in\mathcal{S}_{\mathcal{T}%
}^{m}\backslash\left\{  0\right\}  }\frac{\left\Vert v\right\Vert
_{\mathcal{T}}}{\left\Vert v\right\Vert }\\
&  \leq c_{\operatorname*{eq}}^{-2}\left\Vert u\right\Vert .
\end{align*}
For the second estimate we employ (\ref{estAs}) and (\ref{globinveq}) to
obtain%
\begin{equation}
\left\Vert R_{\mathcal{N}}A_{S}u\right\Vert \leq c_{\operatorname*{eq}}%
^{-2}\left\Vert A_{S}u\right\Vert \leq\frac{C_{\operatorname*{cont}}%
}{c_{\operatorname*{eq}}^{2}}
\left(  1+C_{\operatorname*{inv}}^{2}C_{\operatorname*{qu}}^{2}h^{-2}\right)
\left\Vert u\right\Vert \label{RnAEst}%
\end{equation}
for all $u\in\mathcal{S}_{\mathcal{T}}^{m}$.
\end{proof}

\begin{lemma}
{\label{LemapH1est}Let the bilinear form $a\left(  \cdot,\cdot\right)  $
satisfy (\ref{wellposed}) and let the \emph{CFL condition}
\begin{equation}
C_{\operatorname*{cont}}\Delta t^{2}\left(  1+\frac{C_{\operatorname*{inv}%
}^{2}C_{\operatorname*{qu}}^{2}}{h^{2}}\right)  \leq\min\left\{
6c_{\operatorname*{eq}}^{2}\left(  \frac{c_{\operatorname*{coer}}%
}{C_{\operatorname*{cont}}}\right)  ^{3/2},\frac{4C_{\operatorname*{cont}}%
}{\max\{C_{\operatorname*{cont}},3\}}\right\}
\label{CFL}%
\end{equation}
hold. \newline Then, the bilinear form $a_{p}\left(  \cdot,\cdot\right)  $ is
continuous,
\[
\left\vert a_{p}\left(  u,v\right)  \right\vert \leq C_{\operatorname*{cont}%
}\left(  1+\sqrt{\frac{C_{\operatorname*{cont}}}{c_{\operatorname*{coer}}}%
}\frac{\kappa}{12}\right)  \left\Vert u\right\Vert _{H^{1}\left(
\Omega\right)  }\left\Vert v\right\Vert _{H^{1}\left(  \Omega\right)  }%
\]

%
with
\begin{equation}
\kappa:=\left(  \frac{C_{\operatorname*{cont}}}{c_{\operatorname*{eq}}^{2}%
}\right)  \Delta t^{2}\left(  1+\frac{C_{\operatorname*{inv}}^{2}%
C_{\operatorname*{qu}}^{2}}{h^{2}}\right)  , \label{defz}%
\end{equation}
and symmetric, $a_{p}\left(  u,v\right)  =a_{p}\left(  v,u\right)  $ for all
$u,v\in S.$ Moreover, for any $f\in L^{2}\left(  \Omega\right)  $, the
problem: Find $u\in S$ such that%
\[
a_{p}\left(  u,q\right)  =\left(  f,q\right)  \qquad\forall q\in S
\]
has a unique solution, which satisfies%
\[
\left\Vert u\right\Vert _{H^{1}\left(  \Omega\right)  }\leq\frac
{2}{c_{\operatorname*{coer}}}\left\Vert f\right\Vert .
\]
}
\end{lemma}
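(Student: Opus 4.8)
The key object is the perturbation
\[
B_p := A_{S,p}-A_S = -\frac{2}{p^2}\sum_{j=1}^{p-1}\alpha_j^p\Bigl(\frac{\Delta t}{p}\Bigr)^{2j}A_S\bigl(R_{\mathcal N}A_S\bigr)^{j},
\]
and the first task is to bound $\|B_p u\|$ for $u\in S$. I would iterate the estimate \eqref{estpowerop} from Lemma \ref{LemEstRN}: writing $A_S(R_{\mathcal N}A_S)^j u = A_S (R_{\mathcal N}A_S)\bigl[(R_{\mathcal N}A_S)^{j-1}u\bigr]$ and using that $\|R_{\mathcal N}A_S v\|\le \tfrac{C_{\operatorname{cont}}}{c_{\operatorname{eq}}^2}(1+C_{\operatorname{inv}}^2C_{\operatorname{qu}}^2 h^{-2})\|v\|$ repeatedly, together with $\|A_S v\|\le \sqrt{C_{\operatorname{cont}}}\,\|v\|_{\text{energy}}$-type bounds from Lemma \ref{LemAsu}, each factor $\bigl(\tfrac{\Delta t}{p}\bigr)^{2}\,\tfrac{C_{\operatorname{cont}}}{c_{\operatorname{eq}}^2}(1+C_{\operatorname{inv}}^2C_{\operatorname{qu}}^2 h^{-2})$ collapses to roughly $\kappa/p^2$ (with $\kappa$ as in \eqref{defz}). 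Thus the $j$-th term is controlled by $\tfrac{2}{p^2}|\alpha_j^p|\,(\kappa/p^2)^{\,j-1}\cdot(\kappa/p^2)\cdot(\text{energy norm of }u)$, and summing over $j$ needs an estimate on $\sum_{j}|\alpha_j^p|\,(\kappa/p^2)^{j}$. This is exactly where the bounds on the coefficients $\alpha_j^p$ from the Appendix must enter: they should give something like $|\alpha_j^p|\lesssim p^{2j}$ (up to combinatorial factors), so that $|\alpha_j^p|(\kappa/p^2)^j \lesssim \kappa^j$, and the CFL condition \eqref{CFL} forces $\kappa$ (hence $\kappa$ divided by the relevant constant) to be small enough — in particular below $1$ — that the geometric-type series converges with a bound of order $\kappa$. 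I expect the bookkeeping to produce the stated continuity constant $C_{\operatorname{cont}}\bigl(1+\sqrt{C_{\operatorname{cont}}/c_{\operatorname{coer}}}\,\kappa/12\bigr)$, where the $\kappa/12$ absorbs the numerical factor $6$ in the first branch of the CFL bound and the geometric sum.

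\textbf{Symmetry} is immediate: $a$ is symmetric by \eqref{wellposeda}, $A_S$ is self-adjoint with respect to $(\cdot,\cdot)$ by definition, and $R_{\mathcal N}$ is symmetric positive semi-definite (noted in the Remark after \eqref{symmR}); hence $a\bigl((R_{\mathcal N}A_S)^j u,v\bigr)=(A_S(R_{\mathcal N}A_S)^j u,v)$ and one checks by moving factors of $A_S$ and $R_{\mathcal N}$ across the inner product that $(A_S(R_{\mathcal N}A_S)^j u,v)=(u,A_S(R_{\mathcal N}A_S)^j v)$, so each summand in \eqref{defapv} is symmetric and therefore so is $a_p$.

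\textbf{Coercivity and well-posedness.} For the lower bound I would estimate $a_p(u,u)\ge a(u,u)-|B_p u|\,\|u\|$. Using the perturbation bound above, $\|B_p u\|\lesssim \sqrt{C_{\operatorname{cont}}}\,(\kappa/\text{const})\,\|u\|_{H^1(\Omega)}$, while $\|u\|\le c_{\operatorname{coer}}^{-1/2}a(u,u)^{1/2}\le \ldots$; combining with $a(u,u)\ge c_{\operatorname{coer}}\|u\|_{H^1(\Omega)}^2$ from \eqref{wellposedc}, the CFL condition (specifically the branch $\le 6c_{\operatorname{eq}}^2(c_{\operatorname{coer}}/C_{\operatorname{cont}})^{3/2}$, which makes the relative perturbation at most $\tfrac12$) yields $a_p(u,u)\ge \tfrac12 c_{\operatorname{coer}}\|u\|_{H^1(\Omega)}^2$. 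Since $a_p$ is a symmetric, continuous, coercive bilinear form on the finite-dimensional space $S$, the Lax–Milgram lemma gives existence and uniqueness of $u$ with $a_p(u,q)=(f,q)$ for all $q\in S$, and testing with $q=u$ gives $\tfrac12 c_{\operatorname{coer}}\|u\|_{H^1(\Omega)}^2\le a_p(u,u)=(f,u)\le\|f\|\,\|u\|\le\|f\|\,\|u\|_{H^1(\Omega)}$, i.e. $\|u\|_{H^1(\Omega)}\le \tfrac{2}{c_{\operatorname{coer}}}\|f\|$, as claimed.

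\textbf{Main obstacle.} The crux is the uniform-in-$p$ summation of the series defining $B_p$. Naively, $(R_{\mathcal N}A_S)^j$ loses $h^{-2j}$ powers, and the prefactor $\Delta t^{2j}$ only partly compensates unless the CFL condition is used sharply; moreover the coefficients $\alpha_j^p$ grow with $p$, so one cannot merely bound them crudely. Everything hinges on the precise growth rate of $|\alpha_j^p|$ proved in the Appendix conspiring with the $p^{-2j}$ and $\Delta t^{2j}$ factors so that the effective ratio is $\kappa$, independent of $p$, and then on the CFL condition guaranteeing $\kappa$ is small. Making that cancellation rigorous — rather than order-of-magnitude — is the delicate step; the rest is routine application of Lemmas \ref{Satz9.1}–\ref{LemEstRN} and Lax–Milgram.
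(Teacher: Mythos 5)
Your symmetry argument and your Lax--Milgram endgame are fine (the paper solves the problem in two steps, $a(w,q)=(f,q)$ followed by inverting $I-\frac{2}{p^2}\sum_j\alpha_j^p(\Delta t/p)^{2j}(R_{\mathcal N}A_S)^j$, but proving coercivity of $a_p$ with constant $c_{\operatorname{coer}}/2$ and applying Lax--Milgram directly is an equivalent route). The gap is in the one estimate everything hinges on: the uniform-in-$h$, uniform-in-$p$ bound on the perturbation. First, the $h$-bookkeeping in your iteration does not close. The $j$-th term of $B_p$ contains $j+1$ factors of $A_S$ but only $j$ factors of $(\Delta t/p)^2$: iterating \eqref{estpowerop} pairs $j$ of the $A_S$'s with the time-step factors to give $(\kappa/p^2)^j$, but the remaining $A_S$ must be bounded via \eqref{estAs}, which costs $C_{\operatorname{cont}}\sqrt{1+C_{\operatorname{inv}}^2C_{\operatorname{qu}}^2h^{-2}}$, i.e.\ an uncompensated factor of order $h^{-1}$. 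So your claimed bounds $\|B_pu\|\lesssim\kappa\,\|u\|_{H^1(\Omega)}$ (needed both for continuity and for your coercivity step $a_p(u,u)\ge a(u,u)-\|B_pu\|\,\|u\|$) do not follow from Lemmas \ref{LemAsu} and \ref{LemEstRN}; they would require a CFL condition stronger than \eqref{CFL} by a power of $h$. The paper avoids exactly this loss by never forming $B_pu$ in $L^2$: it keeps the outer bilinear form, writes $(R_{\mathcal N}A_S)^j=A_S^{-1/2}\bigl(A_S^{1/2}R_{\mathcal N}A_S^{1/2}\bigr)^jA_S^{1/2}$, and uses the $h$-independent conversions $\|A_S^{-1/2}g\|_{H^1(\Omega)}\le c_{\operatorname{coer}}^{-1/2}\|g\|$ and $\|A_S^{1/2}u\|\le C_{\operatorname{cont}}^{1/2}\|u\|_{H^1(\Omega)}$, so that all inverse-inequality factors are absorbed into the $j$-th power of the single operator $A_S^{1/2}R_{\mathcal N}A_S^{1/2}$, matched one-for-one by $(\Delta t/p)^{2j}$.

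Second, your treatment of the coefficient sum misreads what the Appendix provides. Lemma \ref{LemTschebaschev} is not a growth bound on $|\alpha_j^p|$; it identifies the \emph{signed} sum $\sum_j\alpha_j^p x^j$ exactly through Chebyshev polynomials and, combined with the spectral theorem for the self-adjoint, positive semi-definite operator $A_S^{1/2}R_{\mathcal N}A_S^{1/2}$ (so that the norm of a polynomial in it equals the maximum of the polynomial over the spectrum, contained in $[0,\kappa]$ by \eqref{estpowerop}), yields $C_p\le\kappa/12$ for all $\kappa\in[0,4p^2]$. Your plan replaces this by $\sum_j|\alpha_j^p|(\kappa/p^2)^j$ with $|\alpha_j^p|\lesssim p^{2j}$ and a geometric series "because the CFL forces $\kappa<1$" --- but \eqref{CFL} does not force $\kappa<1$ (it only gives $\kappa\le 6(c_{\operatorname{coer}}/C_{\operatorname{cont}})^{3/2}$), and dropping the factor $(2j+2)!$ in \eqref{resalpha} the series does not even converge geometrically in that range; keeping it, the absolute-value bound still degrades the constant beyond $\kappa/12$, so the stated CFL no longer delivers the coercivity constant $c_{\operatorname{coer}}/2$ and hence the bound $\|u\|_{H^1(\Omega)}\le\frac{2}{c_{\operatorname{coer}}}\|f\|$ as claimed. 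In short, the two ideas you would need --- the $A_S^{\pm1/2}$ symmetrization to avoid negative powers of $h$, and the spectral-calculus/Chebyshev cancellation to bound the coefficient polynomial uniformly in $p$ --- are precisely the ones missing from the proposal, and without them the argument fails rather than merely being rough in the constants (for the first issue) or delivers weaker constants incompatible with the stated CFL (for the second).
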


\begin{remark}
In (\ref{CFL}) the condition on the time-step $\Delta t$ implies that $\Delta
t$ is essentially proportional to $h$ and inversely proportional to
$\sqrt{C_{\operatorname*{cont}}}$, as $c_{\operatorname*{coer}} \leq
C_{\operatorname*{cont}}$. Hence (\ref{CFL}) corresponds to a genuine CFL
condition since $\sqrt{C_{\operatorname*{cont}}}$ usually corresponds to the
maximal (physical) wave speed.
\end{remark}

\textbf{Proof of Lemma \ref{LemapH1est}.} {If $p=1$, the two bilinear forms
$a_{p}$ and $a$ coincide and the result trivially follows. Thus, we now assume
that $p\geq2$.\newline} \textbf{a) Continuity. }Let $u,v\in S$ and
\begin{equation}
w:=u-\frac{2}{p^{2}}\sum_{j=1}^{p-1}\alpha_{j}^{p}\left(  \frac{\Delta t}%
{p}\right)  ^{2j}\left(  R_{\mathcal{N}}A_{S}\right)  ^{j}u. \label{defw}%
\end{equation}
Then, by definition of $a_{p}$ and continuity of $a$, we have%
\[
\left\vert a_{p}\left(  u,v\right)  \right\vert =\left\vert a\left(
w,v\right)  \right\vert \leq C_{\operatorname*{cont}}\left\Vert w\right\Vert
_{H^{1}\left(  \Omega\right)  }\left\Vert v\right\Vert _{H^{1}\left(
\Omega\right)  }.
\]
By applying the triangle
inequality to \eqref{defw} we obtain
\begin{align*}
\left\Vert w\right\Vert _{H^{1}\left(  \Omega\right)  } &  \leq\left\Vert
u\right\Vert _{H^{1}\left(  \Omega\right)  }+\frac{2}{p^{2}}\left\Vert
\sum_{j=1}^{p-1}\alpha_{j}^{p}\left(  \frac{\Delta t}{p}\right)  ^{2j}\left(
R_{\mathcal{N}}A_{S}\right)  ^{j}u\right\Vert _{H^{1}\left(  \Omega\right)
}\\
&  \leq\left\Vert u\right\Vert _{H^{1}\left(  \Omega\right)  }+\frac{2}{p^{2}%
}\left\Vert A_{S}^{-1/2}\sum_{j=1}^{p-1}\alpha_{j}^{p}\left(  \frac{\Delta
t}{p}\right)  ^{2j}\left(  A_{S}^{1/2}R_{\mathcal{N}}A_{S}^{1/2}\right)
^{j}A_{S}^{1/2}u\right\Vert _{H^{1}\left(  \Omega\right)  }.
\end{align*}
From (\ref{wellposed}), it follows that%
\[
\left\Vert A_{S}^{-1/2}u\right\Vert _{H^{1}\left(  \Omega\right)  }^{2}%
\leq\frac{1}{c_{\operatorname*{coer}}}\left\Vert u\right\Vert ^{2}%
\quad\text{and\quad}\left\Vert A_{S}^{1/2}u\right\Vert ^{2}\leq
C_{\operatorname*{cont}}\left\Vert u\right\Vert _{H^{1}\left(  \Omega\right)
}^{2}\quad\forall u\in S.
\]
Hence,%
\begin{equation}
\left\Vert w\right\Vert _{H^{1}\left(  \Omega\right)  }\leq\left(
1+C_{p}\sqrt{\frac{C_{\operatorname*{cont}}}{c_{\operatorname*{coer}}}%
}\right)  \left\Vert u\right\Vert _{H^{1}\left(  \Omega\right)  }%
.\label{wCpest}%
\end{equation}
with%
\[
C_{p}:=\sup_{v\in S\backslash\left\{  0\right\}  }\frac{2}{p^{2}}\left\Vert
\sum_{j=1}^{p-1}\alpha_{j}^{p}\left(  \frac{\Delta t}{p}\right)  ^{2j}\left(
A_{S}^{1/2}R_{\mathcal{N}}A_{S}^{1/2}\right)  ^{j}v\right\Vert \bigg/ \left\Vert
v\right\Vert .
\]

The operator $A_{S}^{1/2}R_{\mathcal{N}}A_{S}^{1/2}$ is self-adjoint with
respect to the $L^{2}\left(  \Omega\right)  $ scalar product and positive
semi-definite. It is well-known that under these conditions we have%
\[
C_{p}=\max_{\lambda\in\sigma\left(  A_{S}^{1/2}R_{\mathcal{N}}A_{S}^{1/2}\right)
}\frac{2}{p^{2}}\left\vert \sum_{j=1}^{p-1}\alpha_{j}^{p}\left(  \frac{\Delta
t}{p}\right)  ^{2j}\lambda^{j}\right\vert .
\]
From (\ref{estpowerop}) we conclude that the spectrum $\sigma\left(
A_{S}^{1/2}R_{\mathcal{N}}A_{S}^{1/2}\right)  $ is contained in the interval
$\left[  0,\frac{C_{\operatorname*{cont}}}{c_{\operatorname*{eq}}^{2}%
}\left(  1+\frac{C_{\operatorname*{inv}}^{2}C_{\operatorname*{qu}}^{2}}{h^{2}%
}\right)  \right]  $ so that%
\[
C_{p}\leq\sup_{0\leq x\leq\kappa}\frac{2}{p^{2}}\left\vert \sum_{j=1}%
^{p-1}\alpha_{j}^{p}\left(  \frac{x}{p^{2}}\right)  ^{j}\right\vert
\]
with $\kappa$ as in (\ref{defz}). The CFL condition (\ref{CFL}), together
with the continuity and the coercivity of $a$ and $p\geq2$, implies $\kappa
\in\left[  0,4p^{2}\right]  $. Thus, Lemma \ref{LemTschebaschev}
(Appendix) implies
\begin{equation}
C_p \leq \frac{\kappa}{12},
\label{perturb_est1}%
\end{equation}
which we insert in \eqref{wCpest} to obtain%
\[
\left\Vert w\right\Vert _{H^{1}\left(  \Omega\right)  }\leq\left(
1+\frac{\kappa}{12}\sqrt{\frac{C_{\operatorname*{cont}}}%
{c_{\operatorname*{coer}}}}\right)  \left\Vert u\right\Vert _{H^{1}\left(
\Omega\right)  }.
\]

\textbf{b) Symmetry. }This follows since $A_{S}$, $R_{\mathcal{N}}$ are
self-adjoint with respect to the $L^{2}\left(  \Omega\right)  $ scalar product.

\textbf{c) Coercivity. }Note that the problem: Find $u\in S$ such that%
\[
a_{p}\left(  u,q\right)  =\left(  f,q\right)  \quad\forall q\in S
\]
can be solved in two steps: Find $w\in S$ such that%
\begin{equation}
a\left(  w,q\right)  =\left(  f,q\right)  \quad\forall q\in S. \label{LMS}%
\end{equation}
Then $u$ is the solution of%
\[
\left(  I-\frac{2}{p^{2}}\sum_{j=1}^{p-1}\alpha_{j}^{p}\left(  \frac{\Delta
t}{p}\right)  ^{2j}\left(  R_{\mathcal{N}}A_{S}\right)  ^{j}\right)  u=w.
\]
By the similar arguments as in the first part of this proof, one concludes that
the CFL-condition (\ref{CFL}) implies%
\begin{equation}
\left\Vert \frac{2}{p^{2}}\sum_{j=1}^{p-1}\alpha_{j}^{p}\left(  \frac{\Delta
t}{p}\right)  ^{2j}\left(  R_{\mathcal{N}}A_{S}\right)  ^{j}q\right\Vert
_{H^{1}\left(  \Omega\right)  }\leq\frac{1}{2}\left\Vert q\right\Vert
_{H^{1}\left(  \Omega\right)  }\quad\forall q\in S \label{ImASP}%
\end{equation}
so that%
\[
\left\Vert u\right\Vert _{H^{1}\left(  \Omega\right)  }\leq2\left\Vert
w\right\Vert _{H^{1}\left(  \Omega\right)  }.
\]
The well-posedness of problem (\ref{LMS}) follows from the Lax-Milgram lemma
as well as the estimate%
\[
\left\Vert w\right\Vert _{H^{1}\left(  \Omega\right)  }\leq\frac
{1}{c_{\operatorname*{coer}}}\left\Vert f\right\Vert .
\]%
\endproof

\begin{corollary}
\label{Coreigsys}The bilinear form $a_{p}\left(  u,v\right)  $ is symmetric,
continuous and coercive. Hence, there exists an $L^{2}\left(  \Omega\right)
$-orthonormal eigensystem $\left(  \lambda_{S,p,k},\eta_{S,p,k}\right)
_{k=1}^{M}$ for $a_{p}\left(  \cdot,\cdot\right)  $, i.e.,%
\[%
\begin{array}
[c]{cl}%
a_{p}\left(  \eta_{S,p,k},v\right)  =\lambda_{S,p,k}\left(  \eta
_{S,p,k},v\right)  & \forall v\in S,\\
\left(  \eta_{S,p,k},\eta_{S,p,\ell}\right)  =\delta_{k,\ell} & \forall
k,\ell\in\left\{  1,\ldots,M\right\}  ,
\end{array}
\]
with real and positive eigenvalues $\lambda_{S,p,k}>0$. Let the CFL condition
(\ref{CFL}) be satisfied. Then, the smallest and largest eigenvalue satisfy
\[
\lambda_{p}^{\min}\geq\frac{c_{\operatorname*{coer}}}{2}\quad\text{and\quad
}\lambda_{p}^{\max}\leq\frac{3}{2}C_{\operatorname*{cont}}\left(
1+C_{\operatorname*{inv}}^{2}C_{\operatorname*{qu}}^{2}h^{-2}\right)  .
\]

\end{corollary}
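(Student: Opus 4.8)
The plan is to reduce the statement to a two‑sided spectral comparison of $a_{p}$ with $a$ on $S$. The symmetry, continuity and coercivity of $a_{p}$ are exactly the content of Lemma~\ref{LemapH1est}; since $S$ is finite‑dimensional, the operator $A_{S,p}\colon S\to S$ is self‑adjoint and positive definite with respect to $(\cdot,\cdot)$, so the spectral theorem yields an $L^{2}(\Omega)$‑orthonormal eigenbasis $(\eta_{S,p,k})_{k=1}^{M}$ with eigenvalues $\lambda_{S,p,k}>0$. By the Courant--Fischer characterisation it then remains to bound
\[
\lambda_{p}^{\min}=\min_{u\in S\setminus\{0\}}\frac{a_{p}(u,u)}{\|u\|^{2}}\quad\text{and}\quad\lambda_{p}^{\max}=\max_{u\in S\setminus\{0\}}\frac{a_{p}(u,u)}{\|u\|^{2}}.
\]

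To get a handle on $a_{p}(u,u)$, I would conjugate $A_{S,p}$ by $A_{S}^{1/2}$ (recall that $A_{S}$ is self‑adjoint positive definite on $S$). Using $A_{S}^{1/2}(R_{\mathcal{N}}A_{S})^{j}A_{S}^{-1/2}=(A_{S}^{1/2}R_{\mathcal{N}}A_{S}^{1/2})^{j}$, formula~(\ref{defASp}) gives
\[
A_{S}^{-1/2}A_{S,p}A_{S}^{-1/2}=I-g(T),\qquad T:=A_{S}^{1/2}R_{\mathcal{N}}A_{S}^{1/2},\qquad g(x):=\frac{2}{p^{2}}\sum_{j=1}^{p-1}\alpha_{j}^{p}\Big(\frac{\Delta t}{p}\Big)^{2j}x^{j},
\]
where $T$ is self‑adjoint and positive semi‑definite. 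Since $T$ is similar to $R_{\mathcal{N}}A_{S}$, its spectrum lies in $[0,\kappa/\Delta t^{2}]$ by (\ref{estpowerop}); therefore, exactly as in part~a) of the proof of Lemma~\ref{LemapH1est} (the CFL condition (\ref{CFL}) forces $\kappa\in[0,4p^{2}]$, and Lemma~\ref{LemTschebaschev} applies), we obtain $\|g(T)\|\le\kappa/12$. Moreover (\ref{CFL}) together with $c_{\operatorname*{coer}}\le C_{\operatorname*{cont}}$ gives $\kappa\le 6$, hence $\kappa/12\le\tfrac12$, so the self‑adjoint operator $I-g(T)$ has spectrum in $[\tfrac12,\tfrac32]$.

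Consequently, for every $u\in S$, setting $v:=A_{S}^{1/2}u$ we get
\[
a_{p}(u,u)=(A_{S,p}u,u)=\big((I-g(T))v,v\big)\in\big[\tfrac12\|v\|^{2},\tfrac32\|v\|^{2}\big]=\big[\tfrac12\,a(u,u),\tfrac32\,a(u,u)\big].
\]
Dividing by $\|u\|^{2}$ and passing to the infimum resp.\ supremum over $u\in S\setminus\{0\}$, the lower bound then follows from $a(u,u)\ge c_{\operatorname*{coer}}\|u\|_{H^{1}(\Omega)}^{2}\ge c_{\operatorname*{coer}}\|u\|^{2}$, and the upper bound from the estimate $\max_{u\in S\setminus\{0\}}a(u,u)/\|u\|^{2}\le C_{\operatorname*{cont}}(1+C_{\operatorname*{inv}}^{2}C_{\operatorname*{qu}}^{2}h^{-2})$ already established in the proof of Lemma~\ref{LemAsu}. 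This yields $\lambda_{p}^{\min}\ge c_{\operatorname*{coer}}/2$ and $\lambda_{p}^{\max}\le\tfrac32 C_{\operatorname*{cont}}(1+C_{\operatorname*{inv}}^{2}C_{\operatorname*{qu}}^{2}h^{-2})$.

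The only mildly delicate ingredient is the bound $\|g(T)\|\le\kappa/12$, but this is essentially the estimate $C_{p}\le\kappa/12$ that was already proved in Lemma~\ref{LemapH1est}; everything else is bookkeeping with Rayleigh quotients and the finite‑dimensional spectral theorem, so no genuinely new obstacle arises.
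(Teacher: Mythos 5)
Your argument is correct, and it assembles the same core ingredients as the paper (the conjugation by $A_{S}^{1/2}$, the inclusion $\sigma\bigl(A_{S}^{1/2}R_{\mathcal{N}}A_{S}^{1/2}\bigr)\subset[0,\kappa/\Delta t^{2}]$ via (\ref{estpowerop}), and the Chebyshev bound of Lemma \ref{LemTschebaschev} giving $\|g(T)\|\le\kappa/12$) in a genuinely different way. The paper never forms the operator sandwich: it bounds the perturbation term of $a_{p}(v,v)$ in the $H^{1}$-norm, which costs an extra factor $C_{\operatorname*{cont}}\sqrt{C_{\operatorname*{cont}}/c_{\operatorname*{coer}}}$, subtracts it from the coercivity of $a$ to get $a_{p}(v,v)\ge\frac{c_{\operatorname*{coer}}}{2}\|v\|_{H^{1}(\Omega)}^{2}$, and obtains the upper bound from the continuity estimate of Lemma \ref{LemapH1est} combined with the global inverse inequality (\ref{globinveq}). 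You instead work entirely in the $A_{S}$-energy inner product and derive the relative two-sided bound $\frac12\,a(u,u)\le a_{p}(u,u)\le\frac32\,a(u,u)$, after which both eigenvalue bounds are immediate Rayleigh-quotient consequences of (\ref{wellposedc}) and of the bound on $\lambda_{M}$ from the proof of Lemma \ref{LemAsu}. Your route avoids the norm-switching loss: for the lower bound you only need $\kappa\le 6$, whereas the paper's argument uses the sharper consequence $\kappa\le 6\,(c_{\operatorname*{coer}}/C_{\operatorname*{cont}})^{3/2}$ of (\ref{CFL}); both are of course guaranteed by the stated CFL condition, so the final constants coincide. One small presentational point: coercivity of $a_{p}$ (hence positivity of the eigenvalues) is not literally part of the statement of Lemma \ref{LemapH1est}, which asserts continuity, symmetry and unique solvability only, so you should not invoke it there but rather deduce it from your sandwich $a_{p}(u,u)\ge\frac12 a(u,u)>0$ before appealing to the spectral theorem; since that inequality is exactly what you prove, this is a matter of ordering, not a gap.
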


%

\proof
We start with the smallest eigenvalue. It holds%
\begin{align*}
\left\vert a\left(  \frac{2}{p^{2}}\sum_{j=1}^{p-1}\alpha_{j}^{p}\left(
\frac{\Delta t}{p}\right)  ^{2j}\!\!\left(  R_{\mathcal{N}}A_{S}\right)
^{j}v,v\right)  \!\!\right\vert  &  \leq C_{\operatorname*{cont}}\left\Vert
\frac{2}{p^{2}}\sum_{j=1}^{p-1}\alpha_{j}^{p}\left(  \frac{\Delta t}%
{p}\right)  ^{2j}\!\!\left(  R_{\mathcal{N}}A_{S}\right)  ^{j}\!\!v\right\Vert
_{H^{1}\left(  \Omega\right)  }\!\!\!\!\left\Vert v\right\Vert _{H^{1}\left(
\Omega\right)  }\\
&  \overset{\text{(\ref{perturb_est1})}}{\leq}C_{\operatorname*{cont}}%
\sqrt{\frac{C_{\operatorname*{cont}}}{c_{\operatorname*{coer}}}}\frac{\kappa
}{12}\left\Vert v\right\Vert _{H^{1}\left(  \Omega\right)  }^{2}%
\end{align*}
with $\kappa$ as in (\ref{defz}). Hence,%
\begin{align*}
a_{p}\left(  v,v\right)   &  =a\left(  v,v\right)  -a\left(  \frac{2}{p^{2}%
}\sum_{j=1}^{p-1}\alpha_{j}^{p}\left(  \frac{\Delta t}{p}\right)  ^{2j}\left(
R_{\mathcal{N}}A_{S}\right)  ^{j}v,v\right) \\
&  \geq\left(  c_{\operatorname*{coer}}-C_{\operatorname*{cont}}\sqrt
{\frac{C_{\operatorname*{cont}}}{c_{\operatorname*{coer}}}}\frac{\kappa}%
{12}\right)  \left\Vert v\right\Vert _{H^{1}\left(  \Omega\right)  }^{2}.
\end{align*}
The CFL condition (\ref{CFL}) implies%
\begin{subequations}
\label{h1equiconst}
\end{subequations}%
\begin{equation}
a_{p}\left(  v,v\right)  \geq\frac{c_{\operatorname*{coer}}}{2}\left\Vert
v\right\Vert _{H^{1}\left(  \Omega\right)  }^{2}\geq\frac
{c_{\operatorname*{coer}}}{2}\left\Vert v\right\Vert ^{2} \tag{%
\ref{h1equiconst}%
a}\label{h1equiconsta}%
\end{equation}
which yields the lower bound on the smallest eigenvalue $\lambda_{p}^{\min}$.

For the largest eigenvalue $\lambda_{p}^{\max}$, we get by using the CFL
condition and (\ref{globinveq}) that%
\begin{equation}
\left\vert a_{p}\left(  v,v\right)  \right\vert \leq\frac{3}{2}%
C_{\operatorname*{cont}}\left\Vert v\right\Vert _{H^{1}\left(  \Omega\right)
}^{2}\leq\frac{3}{2}C_{\operatorname*{cont}}\left(  1+C_{\operatorname*{inv}%
}^{2}C_{\operatorname*{qu}}^{2}h^{-2}\right)  \left\Vert v\right\Vert ^{2},
\tag{%
\ref{h1equiconst}%
b}\label{h1equiconstb}%
\end{equation}
from which the upper bound on $\lambda_{p}^{\max}$ follows.%
\endproof

\begin{corollary}
\label{CorAspL2}Let the assumptions of Lemma \ref{LemapH1est} be satisfied.
Then%
\[
\left\Vert A_{S,p}^{-1}w\right\Vert \leq\frac{2}{c_{\operatorname*{coer}}%
}\left\Vert w\right\Vert \qquad\forall w\in S,
\]
uniformly in $p$.
\end{corollary}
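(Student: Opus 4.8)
The plan is to deduce the bound on $\|A_{S,p}^{-1}w\|$ directly from the coercivity and well-posedness already established in Lemma \ref{LemapH1est}, simply translating the $H^1$-estimate there into an $L^2$-estimate. Fix $w \in S$ and set $u := A_{S,p}^{-1}w$, so that $A_{S,p}u = w$, i.e. $a_p(u,q) = (w,q)$ for all $q \in S$. Lemma \ref{LemapH1est} (the final assertion, with $f = w$) gives at once $\|u\|_{H^1(\Omega)} \le \frac{2}{c_{\operatorname*{coer}}}\|w\|$, and since $\|u\| \le \|u\|_{H^1(\Omega)}$ the claimed estimate follows. So in fact essentially nothing remains to prove beyond recognizing that $A_{S,p}$ is invertible (which is the existence-and-uniqueness part of Lemma \ref{LemapH1est}) and that $\|\cdot\| \le \|\cdot\|_{H^1(\Omega)}$.

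If one prefers an argument that does not route through Lemma \ref{LemapH1est} but instead uses Corollary \ref{Coreigsys}, the alternative is spectral: expand $u = \sum_{k=1}^M c_k \eta_{S,p,k}$ in the $L^2$-orthonormal eigensystem of $a_p$, so that $w = A_{S,p}u = \sum_k \lambda_{S,p,k} c_k \eta_{S,p,k}$ and hence $A_{S,p}^{-1}w = \sum_k \lambda_{S,p,k}^{-1} c_k \eta_{S,p,k}$. Then
\[
\|A_{S,p}^{-1}w\|^2 = \sum_{k=1}^M \lambda_{S,p,k}^{-2}|c_k|^2 \le (\lambda_p^{\min})^{-2}\sum_{k=1}^M |c_k|^2 = (\lambda_p^{\min})^{-2}\|w\|^2 \cdot \frac{\|w\|^2}{\|w\|^2},
\]
and with the lower bound $\lambda_p^{\min} \ge \frac{c_{\operatorname*{coer}}}{2}$ from Corollary \ref{Coreigsys} one obtains $\|A_{S,p}^{-1}w\| \le \frac{2}{c_{\operatorname*{coer}}}\|w\|$. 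The uniformity in $p$ is immediate because the bound $\lambda_p^{\min} \ge c_{\operatorname*{coer}}/2$ holds uniformly in $p$ under the CFL condition (\ref{CFL}), which itself does not involve $p$.

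There is no real obstacle here: the statement is a short corollary, and the only point requiring a word of care is that the $H^1$-to-$L^2$ passage goes the easy way (one bounds the weaker norm by the stronger one, so no inverse inequality and no negative power of $h$ enters), which is exactly why the constant $\frac{2}{c_{\operatorname*{coer}}}$ is independent of $h$ as well as of $p$. I would simply present the first (one-line) argument, citing Lemma \ref{LemapH1est}, and perhaps remark that the spectral viewpoint of Corollary \ref{Coreigsys} gives the same conclusion.
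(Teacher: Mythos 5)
Your proposal is correct, but it takes a genuinely different (and shorter) route than the paper. You read the bound directly off the final assertion of Lemma \ref{LemapH1est}: with $u:=A_{S,p}^{-1}w$ one has $a_p(u,q)=(w,q)$ for all $q\in S$, hence $\Vert u\Vert_{H^1(\Omega)}\le \frac{2}{c_{\operatorname*{coer}}}\Vert w\Vert$, and $\Vert u\Vert\le\Vert u\Vert_{H^1(\Omega)}$ finishes the argument; equivalently, your spectral variant uses $\lambda_p^{\min}\ge c_{\operatorname*{coer}}/2$ from Corollary \ref{Coreigsys}, and uniformity in $p$ is inherited since that eigenvalue bound (via Lemma \ref{LemTschebaschev}) is uniform in $p$ and the CFL condition (\ref{CFL}) does not involve $p$. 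The paper proceeds differently: it factorizes $A_{S,p}^{-1}=\bigl(I_S-\frac{2}{p^2}\sum_{j=1}^{p-1}\alpha_j^p(\frac{\Delta t}{p})^{2j}(R_{\mathcal N}A_S)^j\bigr)^{-1}A_S^{-1}$, proves the purely $L^2$ operator bound (\ref{L2Ap}) for the first factor by symmetrizing with $R_{\mathcal N}^{1/2}$ and invoking the second Chebyshev estimate of Lemma \ref{LemTschebaschev} together with (\ref{CFL}), and then uses $\Vert A_S^{-1}w\Vert\le c_{\operatorname*{coer}}^{-1}\Vert w\Vert$. Both routes yield the same constant $2/c_{\operatorname*{coer}}$; what the paper's longer derivation buys is the intermediate estimate (\ref{L2Ap}) itself, which is reused in the proof of Theorem \ref{Theotimedisc} (the bound on $\psi$), so your shortcut, while perfectly valid for the corollary, would leave that estimate still to be established separately. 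Two cosmetic points in your spectral variant: if $c_k$ are the coefficients of $u$ then $A_{S,p}^{-1}w=\sum_k c_k\eta_{S,p,k}$ (your formula implicitly relabels $c_k$ as the coefficients of $w$, which is fine but should be said), and the factor $\Vert w\Vert^2/\Vert w\Vert^2$ in your display is superfluous and silently assumes $w\neq 0$.
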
%

\proof
We write%
\[
A_{S,p}^{-1}=\left(  I_{S}-\frac{2}{p^{2}}\sum_{j=1}^{p-1}\alpha_{j}%
^{p}\left(  \frac{\Delta t}{p}\right)  ^{2j}\left(  R_{\mathcal{N}}%
A_{S}\right)  ^{j}\right)  ^{-1}A_{S}^{-1}.
\]
Note that for all $w\in S$ it holds%
\[
\left\Vert \frac{2}{p^{2}}\sum_{j=1}^{p-1}\alpha_{j}^{p}\left(  \frac{\Delta
t}{p}\right)  ^{2j}\left(  R_{\mathcal{N}}A_{S}\right)  ^{j}w\right\Vert
=\left\Vert R_{\mathcal{N}}^{1/2}\frac{2}{p^{2}}\sum_{j=1}^{p-1}\alpha_{j}%
^{p}\left(  \frac{\left(  \Delta t\right)  ^{2}}{p^{2}}R_{\mathcal{N}}%
^{1/2}A_{S}R_{\mathcal{N}}^{1/2}\right)  ^{j-1}\left(  \frac{\Delta t}%
{p}\right)  ^{2}\left(  R_{\mathcal{N}}^{1/2}A_{S}\right)  w\right\Vert .
\]
Since $R_{\mathcal{N}}$  is symmetric, positive semi-definite (see Remark 3), we
infer from (\ref{Rnest}) that $\left\Vert R_{\mathcal{N}}^{1/2}v\right\Vert
\leq c_{\operatorname*{eq}}^{-1}\left\Vert v\right\Vert $ holds for all $v\in
S$. From Lemmas \ref{Leminvineq} and \ref{LemAsu} we obtain for all $v\in S$%
\begin{align*}
\left\Vert \left(  R_{\mathcal{N}}^{1/2}A_{S}\right)  v\right\Vert  & \leq
c_{\operatorname*{eq}}^{-1}\left\Vert A_{S}v\right\Vert \\
& \leq\frac{C_{\operatorname*{cont}}}{c_{\operatorname*{eq}}}\sqrt
{1+C_{\operatorname*{inv}}^{2}C_{\operatorname*{qu}}^{2}h^{-2}}\left\Vert
v\right\Vert _{H^{1}\left(  \Omega\right)  }\leq\frac{C_{\operatorname*{cont}%
}}{c_{\operatorname*{eq}}}\left(  1+C_{\operatorname*{inv}}^{2}%
C_{\operatorname*{qu}}^{2}h^{-2}\right)  \left\Vert v\right\Vert .
\end{align*}
Thus, we argue as for (\ref{wCpest}) and get%
\[
\left\Vert \frac{2}{p^{2}}\sum_{j=1}^{p-1}\alpha_{j}^{p}\left(  \frac{\Delta
t}{p}\right)  ^{2j}\left(  R_{\mathcal{N}}A_{S}\right)  ^{j}w\right\Vert \leq
C_{p}^{\prime}\frac{C_{\operatorname*{cont}}}{c_{\operatorname*{eq}}^{2}%
}\left(  \frac{\Delta t}{p}\right)  ^{2}\left(  1+C_{\operatorname*{inv}}%
^{2}C_{\operatorname*{qu}}^{2}h^{-2}\right)  \left\Vert w\right\Vert
\]
with%
\[
C_{p}^{\prime}:=\max_{\lambda\in\sigma\left(  R_{\mathcal{N}}^{1/2}A_{S}%
R_{\mathcal{N}}^{1/2}\right)  }\frac{2}{p^{2}}\left\vert \sum_{j=1}%
^{p-1}\alpha_{j}^{p}\left(  \frac{\left(  \Delta t\right)  ^{2}\lambda}{p^{2}%
}\right)  ^{j-1}\right\vert .
\]
From Lemma \ref{LemTschebaschev} we conclude that $C_{p}^{\prime}\leq
(p^{2}-1)/12\leq p^{2}/12$ so that (\ref{CFL})
implies%
\[
\left\Vert \frac{2}{p^{2}}\sum_{j=1}^{p-1}\alpha_{j}^{p}\left(  \frac{\Delta
t}{p}\right)  ^{2j}\left(  R_{\mathcal{N}}A_{S}\right)  ^{j}w\right\Vert
\leq\frac{C_{\operatorname*{cont}}}{12 \,c_{\operatorname*{eq}}^{2}}\left(  \Delta
t\right)  ^{2}\left(  1+C_{\operatorname*{inv}}^{2}C_{\operatorname*{qu}}%
^{2}h^{-2}\right)  \left\Vert w\right\Vert \leq\frac{1}{2}\left\Vert
w\right\Vert .
\]
Thus, we have proved%
\begin{equation}
\left\Vert \left(  I_{S}-\frac{2}{p^{2}}\sum_{j=1}^{p-1}\alpha_{j}^{p}\left(
\frac{\Delta t}{p}\right)  ^{2j}\left(  R_{\mathcal{N}}A_{S}\right)
^{j}\right)  ^{-1}w\right\Vert \leq2\left\Vert w\right\Vert \qquad\forall w\in
S\text{.}\label{L2Ap}%
\end{equation}
From (\ref{wellposedc}) we conclude that%
\[
\left\Vert A_{S}^{-1}w\right\Vert \leq c_{\operatorname*{coer}}^{-1}\left\Vert
w\right\Vert \qquad\forall w\in S,
\]
which together with \eqref{L2Ap} leads to the assertion.
\endproof

\subsection{Error equation and estimates}

{To derive a priori error estimates for the LTS/FE-Galerkin solution of
(\ref{leap_frog_lts_fd}), we first introduce the new function
\begin{equation}
v_{S}^{\left(  n+1/2\right)  }:=\frac{u_{S}^{\left(  n+1\right)  }%
-u_{S}^{\left(  n\right)  }}{\Delta t},\label{defvS}%
\end{equation}
and rewrite (\ref{leap_frog_lts_fd}) as a one-step method
\begin{equation}%
\begin{split}
\left(  v_{S}^{\left(  n+1/2\right)  },q\right)  &=  \left(  v_{S}^{\left(
n-1/2\right)  },q\right)  -\Delta ta_{p}\left(  u_{S}^{\left(  n\right)
},q\right)  +\Delta tF^{\left(  n\right)  }\left(  q\right)   \; \forall q\in
S,\\
-\Delta t\left(  v_{S}^{\left(  n+1/2\right)  },r\right)  +\left(
u_{S}^{\left(  n+1\right)  },r\right)  &=  \left(  u_{S}^{\left(  n\right)
},r\right)   \; \forall r\in S,\\%
\left(  u_{S}^{\left(  0\right)  },w\right)  & = \left(  u_{0},w\right)\\
\left(  v_{S}^{\left(  1/2\right)  },w\right)  &=\left(  v_{0},w\right)  +\frac{\Delta t}{2}\left(  F^{\left(  0\right)
}\left(  w\right)  -a\left(  u_{0},w\right)  \right) \;\forall w\in S.
\end{split}
\label{eq1}%
\end{equation}

The elimination of $v_{S}^{\left(  n+1/2\right)  }$ from the second equation
by using the first one leads to the operator equation%
\begin{subequations}
\label{deffrakS}
\end{subequations}%
\begin{equation}
\left(
\begin{array}
[c]{c}%
v_{S}^{\left(  n+1/2\right)  }\\
u_{S}^{\left(  n+1\right)  }%
\end{array}
\right)  =\mathfrak{S}\left(
\begin{array}
[c]{c}%
v_{S}^{\left(  n-1/2\right)  }\\
u_{S}^{\left(  n\right)  }%
\end{array}
\right)  +\left(  \Delta t\right)  f_{S}^{\left(  n\right)  }\left(
\begin{array}
[c]{c}%
1\\
\Delta t
\end{array}
\right)  \tag{%
\ref{deffrakS}%
a}\label{deffrakSa}%
\end{equation}
with $A_{S,p}$ as in (\ref{defASp}), }$f_{S}^{\left(  n\right)  }${ as in
(\ref{deffnsn}), and
\begin{equation}
\mathfrak{S:}=\left[
\begin{array}
[c]{rr}%
I_{S} & -\Delta tA_{S,p}\\
\Delta tI_{S} & I_{S}-\Delta t^{2}A_{S,p}%
\end{array}
\right]  .\tag{%
\ref{deffrakS}%
b}\label{deffrakSb}%
\end{equation}
}

Next, we will derive a recursion for the error
\[
e_{v}^{\left(  n+1/2\right)  }=v\left(  t_{n+1/2}\right)  -v_{S}^{\left(
n+1/2\right)  }\quad\text{and\quad}e_{u}^{\left(  n+1\right)  }=u\left(
t_{n+1}\right)  -u_{S}^{\left(  n+1\right)  },
\]
where $u$ is the solution of \eqref{waveeq}-\eqref{waveeqic} and $v$ the
solution of the corresponding first-order formulation: Find $u,v: [0,T]
\rightarrow V$ such that%

\begin{equation}%
\begin{split}
\left(  \dot{v},w\right)  +a\left(  u,w\right)   &  =F\left(  w\right)
\quad\forall\,w\in V,\quad t>0,\\
\left(  v,w\right)   &  =\left(  \dot{u},w\right)  \quad\forall\,w\in V,\quad
t>0,\\
&
\end{split}
\label{waveeq 1st order}%
\end{equation}
and initial conditions $u(0)=u_{0}$ and $v(0)=v_{0}.$

To split the error we introduce the first-order formulation of the
semi-discrete problem (\ref{spacedisc}). Find $u_{S},v_{S}:\left[  0,T\right]
\rightarrow S$ such that%
\[%
\begin{array}
[c]{rl}%
\left.
\begin{array}
[c]{r}%
\left(  \dot{v}_{S},w\right)  +a\left(  u_{S},w\right)  =F\left(  w\right) \\
\left(  v_{S},w\right)  =\left(  \dot{u}_{S},w\right)
\end{array}
\right\}  & \forall w\in S,\quad t>0,\\
\left.
\begin{array}
[c]{c}%
\left(  u_{S}\left(  0\right)  ,w\right)  =\left(  u_{0},w\right) \\
\\
\left(  v_{S}\left(  0\right)  ,w\right)  =\left(  v_{0},w\right)
\end{array}
\right\}  & \forall w\in S.
\end{array}
\]
Hence, we may write $\mathbf{e}^{\left(  n+1\right)  }:=\left(  e_{v}^{\left(
n+\frac{1}{2}\right)  },e_{u}^{\left(  n+1\right)  }\right)  ^{\intercal
}=\mathbf{e}_{S}^{\left(  n+1\right)  }+\mathbf{e}_{S,\Delta t}^{\left(
n+1\right)  }$ with%
\begin{align}
\mathbf{e}_{S}^{\left(  n+1\right)  }  &  :=\left(
\begin{array}
[c]{c}%
e_{v,S}^{\left(  n+1/2\right)  }\\
e_{u,S}^{\left(  n+1\right)  }%
\end{array}
\right)  :=\left(
\begin{array}
[c]{c}%
v\left(  t_{n+1/2}\right)  -v_{S}\left(  t_{n+1/2}\right) \\
u\left(  t_{n+1}\right)  -u_{S}\left(  t_{n+1}\right)
\end{array}
\right)  ,\label{defspliterror}\\
\mathbf{e}_{S,\Delta t}^{\left(  n+1\right)  }  &  :=\left(
\begin{array}
[c]{c}%
e_{v,S,\Delta t}^{\left(  n+1/2\right)  }\\
e_{u,S,\Delta t}^{\left(  n+1\right)  }%
\end{array}
\right)  :=\left(
\begin{array}
[c]{c}%
v_{S}\left(  t_{n+1/2}\right)  -v_{S}^{\left(  n+1/2\right)  }\\
u_{S}\left(  t_{n+1}\right)  -u_{S}^{\left(  n+1\right)  }%
\end{array}
\right)  .
\end{align}

We first investigate the error $\mathbf{e}_{S,\Delta t}^{\left(  n+1\right)  }$
and introduce%
\begin{subequations}
\label{capdelta}
\end{subequations}%
\begin{align}
\Delta_{1}^{\left(  n+1/2\right)  }  &  :=\frac{v_{S}\left(  t_{n+1/2}\right)
-v_{S}\left(  t_{n-1/2}\right)  }{\Delta t}+A_{S,p}u_{S}\left(  t_{n}\right)
-f_{S}^{\left(  n\right)  },\tag{%
\ref{capdelta}%
a}\label{capdeltaa}\\
\Delta_{2}^{\left(  n+1\right)  }  &  :=\frac{u_{S}\left(  t_{n+1}\right)
-u_{S}\left(  t_{n}\right)  }{\Delta t}-v_{S}\left(  t_{n+1/2}\right)  . \tag{%
\ref{capdelta}%
b}\label{capdeltab}%
\end{align}
These equations can be written in the form%
\begin{align}
v_{S}\left(  t_{n+1/2}\right)   &  =v_{S}\left(  t_{n-1/2}\right)  +\left(
\Delta t\right)  \Delta_{1}^{\left(  n+1/2\right)  }-\left(  \Delta t\right)
A_{S,p}u_{S}\left(  t_{n}\right)  +\left(  \Delta t\right)  f_{S}^{\left(
n\right)  },\label{eq3}\\
u_{S}\left(  t_{n+1}\right)   &  =u_{S}\left(  t_{n}\right)  +\left(  \Delta
t\right)  v_{S}\left(  t_{n+1/2}\right)  +\left(  \Delta t\right)  \Delta
_{2}^{\left(  n+1\right)  }. \label{eq4}%
\end{align}
By subtracting the first equation in (\ref{eq1}) from (\ref{eq3}) and the
second equation in (\ref{eq1}) from (\ref{eq4}) we obtain%
\[%
\begin{array}
[c]{rl}%
e_{v,S,\Delta t}^{\left(  n+1/2\right)  }= & e_{v,S,\Delta t}^{\left(
n-1/2\right)  }-\left(  \Delta t\right)  A_{S,p}e_{u,S,\Delta t}^{\left(
n\right)  }+\left(  \Delta t\right)  \Delta_{1}^{\left(  n+1/2\right)  },\\
e_{u,S,\Delta t}^{\left(  n+1\right)  }= & e_{u,S,\Delta t}^{\left(  n\right)
}+\left(  \Delta t\right)  e_{v,S,\Delta t}^{\left(  n+1/2\right)  }+\left(
\Delta t\right)  \Delta_{2}^{\left(  n+1\right)  }.
\end{array}
\]
Eliminating the term $e_{v,S,\Delta t}^{\left(  n+1/2\right)  }$ in the second
equation by using the first one yields%
\[%
\begin{array}
[c]{rl}%
e_{v,S,\Delta t}^{\left(  n+1/2\right)  }= & e_{v,S,\Delta t}^{\left(
n-1/2\right)  }-\left(  \Delta t\right)  A_{S,p}e_{u,S,\Delta t}^{\left(
n\right)  }+\left(  \Delta t\right)  \Delta_{1}^{\left(  n+1/2\right)  },\\
e_{u,S,\Delta t}^{\left(  n+1\right)  }= & \left(  \Delta t\right)
e_{v,S,\Delta t}^{\left(  n-1/2\right)  }+e_{u,S,\Delta t}^{\left(  n\right)
}-\left(  \Delta t\right)  ^{2}A_{S,p}e_{u,S,\Delta t}^{\left(  n\right)  },\\
& +\left(  \Delta t\right)  ^{2}\Delta_{1}^{\left(  n+1/2\right)  }+\left(
\Delta t\right)  \Delta_{2}^{\left(  n+1\right)  }.
\end{array}
\]

We rewrite it in operator form by using the operator $\mathfrak{S}$ as in
(\ref{deffrakS})%
\[
\left(
\begin{array}
[c]{c}%
e_{v,S,\Delta t}^{\left(  n+1/2\right)  }\\
e_{u,S,\Delta t}^{\left(  n+1\right)  }%
\end{array}
\right)  =\mathfrak{S}\left(
\begin{array}
[c]{c}%
e_{v,S,\Delta t}^{\left(  n-1/2\right)  }\\
e_{u,S,\Delta t}^{\left(  n\right)  }%
\end{array}
\right)  +\Delta t\mathfrak{S}_{1}\left(
\begin{array}
[c]{c}%
\Delta_{1}^{\left(  n+1/2\right)  }\\
\Delta_{2}^{\left(  n+1\right)  }%
\end{array}
\right)
\]
with

\[
\mathfrak{S}_{1}=\left[
\begin{array}
[c]{ll}%
I_{S} & 0\\
\left(  \Delta t\right)  I_{S} & I_{S}%
\end{array}
\right]
\]
This recursion can be resolved%
\[
\left(
\begin{array}
[c]{c}%
e_{v,S,\Delta t}^{\left(  n+1/2\right)  }\\
e_{u,S,\Delta t}^{\left(  n+1\right)  }%
\end{array}
\right)  =\mathfrak{S}^{n}\left(
\begin{array}
[c]{c}%
e_{v,S,\Delta t}^{\left(  1/2\right)  }\\
e_{u,S,\Delta t}^{\left(  1\right)  }%
\end{array}
\right)  +\Delta t\sum_{\ell=0}^{n-1}\mathfrak{S}^{\ell}\mathfrak{S}%
_{1}\left(
\begin{array}
[c]{c}%
\Delta_{1}^{\left(  n-\ell+1/2\right)  }\\
\Delta_{2}^{\left(  n+1-\ell\right)  }%
\end{array}
\right)  .\label{eqrecursion}%
\]

Let $I_{S}^{2\times2}:=\left[
\begin{array}
[c]{cc}%
I_{S} & 0\\
0 & I_{S}%
\end{array}
\right]  $ and observe that
\[
\left(  I_{S}^{2\times2}-\mathfrak{S}\right)  ^{-1}=\frac{1}{\Delta t}\left[
\begin{array}
[c]{rr}%
\left(  \Delta t\right)  I_{S} & -I_{S}\\
A_{S,p}^{-1} & 0
\end{array}
\right] \]
and

\[ \left(  I_{S}^{2\times2}-\mathfrak{S}\right)
^{-1}\mathfrak{S}_{1}=\frac{1}{\Delta t}\left[
\begin{array}
[c]{cc}%
0 & -I_{S}\\
A_{S,p}^{-1} & 0
\end{array}
\right]  .\label{eqtwobytwo}%
\]
We introduce%
\begin{align}%
\mbox{\boldmath$ \sigma$}%
^{\left(  n\right)  }  &  =\left(  I_{S}^{2\times2}-\mathfrak{S}\right)
^{-1}\mathfrak{S}_{1}\left(
\begin{array}
[c]{c}%
\Delta_{1}^{\left(  n+1/2\right)  }\\
\Delta_{2}^{\left(  n+1\right)  }%
\end{array}
\right)  =\frac{1}{\Delta t}\left(
\begin{array}
[c]{c}%
-\Delta_{2}^{\left(  n+1\right)  }\\
A_{S,p}^{-1}\Delta_{1}^{\left(  n+1/2\right)  }%
\end{array}
\right) \label{defboldsigmab}\\
&  \overset{\text{(\ref{capdelta})}}{=}\frac{1}{\Delta t}\left(
\begin{array}
[c]{c}%
-\frac{u_{S}\left(  t_{n+1}\right)  -u_{S}\left(  t_{n}\right)  }{\Delta
t}+v_{S}\left(  t_{n+1/2}\right) \\
u_{S}\left(  t_{n}\right)  +A_{S,p}^{-1}\left(  \frac{v_{S}\left(
t_{n+1/2}\right)  -v_{S}\left(  t_{n-1/2}\right)  }{\Delta t}-f_{S}^{\left(
n\right)  }\right)
\end{array}
\right) \nonumber
\end{align}
and the differences%
\begin{align*}
\operatorname*{diff}\nolimits^{\left(  n\right)  }  &  :=\left(
\begin{array}
[c]{l}%
\operatorname*{diff}\nolimits_{1}^{\left(  n-1/2\right)  }\\
\operatorname*{diff}\nolimits_{2}^{\left(  n\right)  }%
\end{array}
\right)  :=%
\mbox{\boldmath$ \sigma$}%
^{\left(  n\right)  }-%
\mbox{\boldmath$ \sigma$}%
^{\left(  n+1\right)  }\\
&  =\!\!\left(\!\!\!
\begin{array}
[c]{c}%
\frac{u_{S}\left(  t_{n+2}\right)  -2u_{S}\left(  t_{n+1}\right)
+u_{S}\left(  t_{n}\right)  }{\Delta t^{2}}+\frac{v_{S}\left(  t_{n+1/2}%
\right)  -v_{S}\left(  t_{n+3/2}\right)  }{\Delta t}\\
\frac{u_{S}\left(  t_{n}\right)  -u_{S}\left(  t_{n+1}\right)  }{\Delta
t}+A_{S,p}^{-1}\!\left(  \frac{-v_{S}\left(  t_{n+3/2}\right)  +2v_{S}\left(
t_{n+1/2}\right)  -v_{S}\left(  t_{n-1/2}\right)  }{\Delta t^{2}}+\frac
{f_{S}^{\left(  n+1\right)  }-f_{S}^{\left(  n\right)  }}{\Delta t}\!\right)
\end{array}
\!\!\!\right)
\end{align*}
and use (\ref{eqtwobytwo}) to rewrite the error representation
(\ref{eqrecursion}) as
\begin{align}
\left(
\begin{array}
[c]{c}%
e_{v,S,\Delta t}^{\left(  n+1/2\right)  }\\
e_{u,S,\Delta t}^{\left(  n+1\right)  }%
\end{array}
\right)   &  =\mathfrak{S}^{n}\left(
\begin{array}
[c]{c}%
e_{v,S,\Delta t}^{\left(  1/2\right)  }\\
e_{u,S,\Delta t}^{\left(  1\right)  }%
\end{array}
\right)  +\Delta t\sum_{\ell=0}^{n-1}\mathfrak{S}^{\ell}\left(  I_{S}%
^{2\times2}-\mathfrak{S}\right)
\mbox{\boldmath$ \sigma$}%
^{\left(  n-\ell\right)  }\nonumber\\
&  =\mathfrak{S}^{n}\left(
\begin{array}
[c]{c}%
e_{v,S,\Delta t}^{\left(  1/2\right)  }\\
e_{u,S,\Delta t}^{\left(  1\right)  }%
\end{array}
\right)  +\Delta t\sum_{\ell=1}^{n-1}\mathfrak{S}^{\ell}\operatorname*{diff}%
\nolimits^{\left(  n-\ell\right)  }\nonumber\\
&  +\Delta t%
\mbox{\boldmath$ \sigma$}%
^{\left(  n\right)  }-\Delta t\mathfrak{S}^{n}%
\mbox{\boldmath$ \sigma$}%
^{\left(  1\right)  }. \label{errorreprfu}%
\end{align}

\subsubsection{Stability}

As usual, the convergence analysis can be split
into an estimate for the stability of the iteration operator $\mathfrak{S}$
(corresponding to a homogeneous right-hand side) and a consistency estimate.
We begin with the analysis of the stability.

\begin{theorem}
[Stability]\label{TheoStabLeapFrog}Let the CFL condition (\ref{CFL}) be
satisfied. Then the leap-frog scheme (\ref{leap_frog_lts_fd}) is stable%
\[
\left\Vert v_{S}^{\left(  n{+1/2}\right)  }\right\Vert +\left\Vert
u_{S}^{\left(  n\right)  }\right\Vert \leq C_{0}\left(  \left\Vert
v_{S}^{\left(  1/2\right)  }\right\Vert +\left\Vert u_{S}^{\left(  1\right)
}\right\Vert \right)  ,
\]
where $C_{0}$ is independent of $n$, $\Delta t$, $h$, and $T$.
\end{theorem}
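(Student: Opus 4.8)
The plan is to reduce the statement to the power–boundedness of the iteration operator $\mathfrak{S}$ from \eqref{deffrakS} and to establish this not in $L^{2}\times L^{2}$ — in which $\mathfrak{S}$ is \emph{not} power–bounded uniformly in $h$, since the block $-\Delta t\,A_{S,p}$ acts like $\Delta t\,\lambda_{p}^{\max}\sim h^{-1}$ — but in the natural discrete leap-frog energy, which is conserved and, thanks to the CFL condition \eqref{CFL}, equivalent to a quantity controlling $\|v_{S}^{(n+1/2)}\|+\|u_{S}^{(n)}\|$ with constants independent of $h$, $p$, $\Delta t$, $n$, $T$. Since the claimed bound involves only the initial pair, it suffices to treat the homogeneous case $F\equiv 0$; the general case then follows by a discrete Duhamel/Gronwall argument over the same energy. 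Writing \eqref{leap_frog_lts_fd} in the first-order form \eqref{eq1} with $v_{S}^{(n+1/2)}$ as in \eqref{defvS}, so that $v_{S}^{(n+1/2)}=v_{S}^{(n-1/2)}-\Delta t\,A_{S,p}u_{S}^{(n)}$ and $u_{S}^{(n+1)}=u_{S}^{(n)}+\Delta t\,v_{S}^{(n+1/2)}$, I introduce $\mathcal{E}^{(n)}:=\|v_{S}^{(n+1/2)}\|^{2}+a_{p}(u_{S}^{(n)},u_{S}^{(n+1)})$.

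The first step is the exact conservation $\mathcal{E}^{(n)}=\mathcal{E}^{(n-1)}$. This is a one-line telescoping: the increment of the first term is $\langle v_{S}^{(n+1/2)}+v_{S}^{(n-1/2)},\,v_{S}^{(n+1/2)}-v_{S}^{(n-1/2)}\rangle=-\Delta t\,a_{p}(u_{S}^{(n)},v_{S}^{(n+1/2)}+v_{S}^{(n-1/2)})$, while the increment of the second is $a_{p}(u_{S}^{(n)},u_{S}^{(n+1)}-u_{S}^{(n-1)})=\Delta t\,a_{p}(u_{S}^{(n)},v_{S}^{(n+1/2)}+v_{S}^{(n-1/2)})$, using the symmetry of $a_{p}$ proved in Lemma \ref{LemapH1est}; the two cancel, hence $\mathcal{E}^{(n)}=\mathcal{E}^{(1)}$ for all $n$.

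The decisive step is to show that $\mathcal{E}^{(n)}$ is equivalent, with constants independent of $h$, $p$, $\Delta t$, to $\|v_{S}^{(n+1/2)}\|^{2}+a_{p}(u_{S}^{(n)},u_{S}^{(n)})$. Substituting $u_{S}^{(n+1)}=u_{S}^{(n)}+\Delta t\,v_{S}^{(n+1/2)}$ gives $\mathcal{E}^{(n)}=\|v_{S}^{(n+1/2)}\|^{2}+a_{p}(u_{S}^{(n)},u_{S}^{(n)})+\Delta t\,a_{p}(u_{S}^{(n)},v_{S}^{(n+1/2)})$, and the cross term is controlled by Cauchy--Schwarz in $a_{p}$ together with $\Delta t\,a_{p}(w,w)^{1/2}\le(\Delta t^{2}\lambda_{p}^{\max})^{1/2}\|w\|$; a scalar Young inequality then yields the two-sided bound. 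Here is where the CFL condition \eqref{CFL} is used crucially: combined with the upper eigenvalue estimate of Corollary \ref{Coreigsys} it keeps $\Delta t^{2}\lambda_{p}^{\max}$ bounded away from the leap-frog stability threshold $4$, so the cross term is a fixed fraction of $\|v_{S}^{(n+1/2)}\|^{2}+a_{p}(u_{S}^{(n)},u_{S}^{(n)})$. Equivalently, one may diagonalize $A_{S,p}$ by Corollary \ref{Coreigsys}, reducing the iteration to the scalar $2\times2$ blocks $M_{\lambda}=\bigl[\begin{smallmatrix}1 & -\Delta t\lambda\\ \Delta t & 1-\Delta t^{2}\lambda\end{smallmatrix}\bigr]$, observe $\det M_{\lambda}=1$ (the volume preservation reflecting energy conservation) and $|\operatorname{tr}M_{\lambda}|=|2-\Delta t^{2}\lambda|$ bounded away from $2$, and note that $M_{\lambda}$ is then conjugate to a rotation whose conjugating positive definite form is precisely $\mathcal{E}$ restricted to the block and has condition number bounded independently of $\lambda$; hence $\|M_{\lambda}^{n}\|$ is uniformly bounded.

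Finally, chaining $\|v_{S}^{(n+1/2)}\|^{2}+a_{p}(u_{S}^{(n)},u_{S}^{(n)})\lesssim\mathcal{E}^{(n)}=\mathcal{E}^{(1)}\lesssim\|v_{S}^{(1/2)}\|^{2}+a_{p}(u_{S}^{(1)},u_{S}^{(1)})$, I use the lower eigenvalue bound $\lambda_{p}^{\min}\ge c_{\operatorname*{coer}}/2$ from Corollary \ref{Coreigsys} (cf.\ \eqref{h1equiconsta}) to pass to $\|u_{S}^{(n)}\|^{2}\le\tfrac{2}{c_{\operatorname*{coer}}}a_{p}(u_{S}^{(n)},u_{S}^{(n)})$ on the left, and rewrite the right-hand term via $u_{S}^{(1)}=u_{S}^{(0)}+\Delta t\,v_{S}^{(1/2)}$ together with $\Delta t^{2}\lambda_{p}^{\max}\lesssim 1$; collecting constants gives $C_{0}$ depending only on $c_{\operatorname*{coer}}$ and $C_{\operatorname*{cont}}$, in particular independent of $n$, $\Delta t$, $h$, $p$, $T$. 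The main obstacle throughout is exactly this $h$-uniformity: one must never measure $u_{S}^{(n)}$ or apply $\mathfrak{S}$ in plain $L^{2}$ (where inverse inequalities would force negative powers of $h$), and the equivalence constants in the middle step must not degenerate as $h\to 0$ — both are secured by the quantitative CFL condition \eqref{CFL} via Corollary \ref{Coreigsys}, i.e.\ by pairing the symplectic-type structure $\det M_{\lambda}=1$ of the leap-frog map with a genuine spectral gap below the instability threshold.
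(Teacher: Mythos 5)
Your route is genuinely different from the paper's. The paper expands $(v_S^{(n-1/2)},u_S^{(n)})$ in the $L^{2}$-orthonormal eigensystem of $a_{p}$ from Corollary \ref{Coreigsys}, reduces the iteration to the $2\times2$ blocks $\mathbf{S}_{p}$, observes that under \eqref{CFL} their eigenvalues are distinct and of modulus one, and concludes by diagonalizability plus norm equivalence on $\mathbb{R}^{2}$; you instead use the conserved discrete leap-frog energy $\mathcal{E}^{(n)}=\|v_S^{(n+1/2)}\|^{2}+a_{p}(u_S^{(n)},u_S^{(n+1)})$. Your conservation identity is correct (it only needs the symmetry of $a_{p}$ from Lemma \ref{LemapH1est}), the equivalence $\mathcal{E}^{(n)}\simeq\|v_S^{(n+1/2)}\|^{2}+a_{p}(u_S^{(n)},u_S^{(n)})$ holds once $\Delta t^{2}\lambda_{p}^{\max}$ is bounded away from $4$, and the coercivity bound $\lambda_{p}^{\min}\ge c_{\operatorname*{coer}}/2$ does give $h$-uniform control of $\|u_S^{(n)}\|$ by $\mathcal{E}^{(1)\,1/2}$.

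The gap is in your final chaining step. The initial energy contains $a_{p}(u_S^{(1)},u_S^{(1)})$ (or $a_{p}(u_S^{(0)},u_S^{(1)})$), and the only available bound in terms of the data appearing in the theorem is $a_{p}(u_S^{(1)},u_S^{(1)})\le\lambda_{p}^{\max}\|u_S^{(1)}\|^{2}$ with $\lambda_{p}^{\max}\sim h^{-2}$. Your proposed fix — substituting $u_S^{(1)}=u_S^{(0)}+\Delta t\,v_S^{(1/2)}$ and invoking $\Delta t^{2}\lambda_{p}^{\max}\lesssim1$ — does not close this: it converts $a_{p}$-norms of $\Delta t\,v_S^{(1/2)}$ into $\|v_S^{(1/2)}\|$, but it either leaves the factor $\sqrt{\lambda_{p}^{\max}}$ in front of $\|u_S^{(1)}\|$ or introduces $u_S^{(0)}$, which is not on the right-hand side of the statement. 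So what your argument actually proves is stability in the energy norm, $\|v_S^{(n+1/2)}\|+a_{p}(u_S^{(n)},u_S^{(n)})^{1/2}\le C\bigl(\|v_S^{(1/2)}\|+a_{p}(u_S^{(1)},u_S^{(1)})^{1/2}\bigr)$, not the literal $L^{2}\times L^{2}$ bound. This is not cosmetic: for a mode with $\Delta t^{2}\lambda\approx2$ and data $(v_S^{(1/2)},u_S^{(1)})=(0,\eta_{S,p,k})$, a single step already gives $\|v_S^{(3/2)}\|=\Delta t\,\lambda\approx2/\Delta t$, so the $v$-part of the claim cannot be obtained with an $h$-independent constant by any rescaling-free energy argument; the same delicacy is hidden in the paper's $\mathbb{R}^{2}$ norm-equivalence constant, and what is actually needed downstream in Theorem \ref{Theotimedisc} is only the $u$-row of $\mathfrak{S}^{n}$, whose entries are uniformly bounded. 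Relatedly, your parenthetical alternative (the blocks $M_{\lambda}$ are conjugate to rotations with a conjugation whose "condition number is bounded independently of $\lambda$") is essentially the paper's proof, but the asserted uniform conditioning is precisely what fails in the unscaled $(v,u)$ variables, where the conjugation has condition number of order $\max(\sqrt{\lambda},1/\sqrt{\lambda})$; uniformity holds only in the energy-scaled variables or for the $u$-component.
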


\begin{proof}
We choose the eigensystem as introduced in Corollary \ref{Coreigsys} and
expand%
\[
u_{S}^{\left(  n\right)  }=\sum_{k=1}^{M}\chi_{S,p,k}^{\left(  n\right)  }%
\eta_{S,p,k}\quad\text{and\quad}v_{S}^{\left(  n-1/2\right)  }=\sum_{k=1}%
^{M}\beta_{S,p,k}^{\left(  n-1/2\right)  }\eta_{S,p,k}.
\]
Inserting this into the recursion $\left(
\begin{array}
[c]{c}%
v_{S}^{\left(  n+1/2\right)  }\\
u_{S}^{\left(  n+1\right)  }%
\end{array}
\right)  =\mathfrak{S}\left(
\begin{array}
[c]{c}%
v_{S}^{\left(  n-1/2\right)  }\\
u_{S}^{\left(  n\right)  }%
\end{array}
\right)  $ leads to a recursion for the coefficients $\beta_{S,p,k}^{\left(
n+1/2\right)  }$, $\chi_{S,p,k}^{\left(  n+1\right)  }$:%
\begin{equation}
\left(
\begin{array}
[c]{c}%
\beta_{S,p,k}^{\left(  n+1/2\right)  }\\
\chi_{S,p,k}^{\left(  n+1\right)  }%
\end{array}
\right)  =\mathbf{S}_{p}\left(
\begin{array}
[c]{c}%
\beta_{S,p,k}^{\left(  n-1/2\right)  }\\
\chi_{S,p,k}^{\left(  n\right)  }%
\end{array}
\right)  \label{recursionu}%
\end{equation}
with%
\[
\mathbf{S}_{p}=\left(
\begin{array}
[c]{rr}%
1 & -\left(  \Delta t\right)  \lambda_{S,p,k}\\
\Delta t & 1-\left(  \Delta t\right)  ^{2}\lambda_{S,p,k}%
\end{array}
\right)  .
\]
The eigenvalues of $\mathbf{S}_{p}$ are given by%
\[
1-\frac{\lambda_{S,p,k}\left(  \Delta t\right)  ^{2}}{2}\pm\frac
{\operatorname*{i}\Delta t}{2}\sqrt{\lambda_{S,p,k}\left(  4-\lambda
_{S,p,k}\left(  \Delta t\right)  ^{2}\right)  }.
\]
The CFL condition (\ref{CFL}) implies $\left(  \Delta t\right)  ^{2}%
\lambda_{p}^{\max}<4$ so that the eigenvalues are different and $\mathbf{S}%
_{p}$ is diagonalizable. From \cite[Satz (6.9.2)(2)]{StoerII} we conclude that
there is a norm $\left\vert
\kern-.1em%
\left\vert
\kern-.1em%
\left\vert \cdot\right\vert
\kern-.1em%
\right\vert
\kern-.1em%
\right\vert $ in $\mathbb{R}^{2}$ such that the associated matrix norm
$\left\vert
\kern-.1em%
\left\vert
\kern-.1em%
\left\vert \mathbf{S}_{p}\right\vert
\kern-.1em%
\right\vert
\kern-.1em%
\right\vert $ is bounded from above by the spectral radius:%
\[
\rho\left(  \mathbf{S}_{p}\right)  =\max_{\pm}\left\vert 1-\frac
{\lambda_{S,p,k}\left(  \Delta t\right)  ^{2}}{2}\pm\frac{\operatorname*{i}%
\Delta t}{2}\sqrt{\lambda_{S,p,k}\left(  4-\lambda_{S,p,k}\left(  \Delta
t\right)  ^{2}\right)  }\right\vert =1\text{.}%
\]
Hence%
\[
\left\vert
\kern-.1em%
\left\vert
\kern-.1em%
\left\vert \left(
\begin{array}
[c]{c}%
\beta_{S,p,k}^{\left(  n+1/2\right)  }\\
\chi_{S,p,k}^{\left(  n+1\right)  }%
\end{array}
\right)  \right\vert
\kern-.1em%
\right\vert
\kern-.1em%
\right\vert \leq\left\vert
\kern-.1em%
\left\vert
\kern-.1em%
\left\vert \left(
\begin{array}
[c]{c}%
\beta_{S,p,k}^{\left(  1/2\right)  }\\
\chi_{S,p,k}^{\left(  1\right)  }%
\end{array}
\right)  \right\vert
\kern-.1em%
\right\vert
\kern-.1em%
\right\vert .
\]
Since all norms in $\mathbb{R}^{2}$ are equivalent there exists a constant $C$
such that%
\begin{equation}
\sqrt{\left\vert \chi_{S,p,k}^{\left(  n\right)  }\right\vert ^{2}+\left\vert
\beta_{S,p,k}^{\left(  n-1/2\right)  }\right\vert ^{2}}\leq C\sqrt{\left\vert
\beta_{S,p,k}^{\left(  1/2\right)  }\right\vert ^{2}+\left\vert \chi
_{S,p,k}^{\left(  1\right)  }\right\vert ^{2}}. \label{alphaest}%
\end{equation}
The eigenfunctions $\eta_{S,p,k}$ are chosen to be an orthonormal system in
$L^{2}\left(  \Omega\right)  $ so that%
\begin{align}
\left\Vert v_{S}^{\left(  n+1/2\right)  }\right\Vert ^{2}+\left\Vert
u_{S}^{\left(  n\right)  }\right\Vert ^{2}  &  =\sum_{k=1}^{M}\left\vert
\chi_{S,p,k}^{\left(  n\right)  }\right\vert ^{2}+\left\vert \beta
_{S,p,k}^{\left(  n+1/2\right)  }\right\vert ^{2}\leq C^{2}\sum_{k=1}%
^{M}\left(  \left\vert \beta_{S,p,k}^{\left(  1/2\right)  }\right\vert
^{2}+\left\vert \chi_{S,p,k}^{\left(  1\right)  }\right\vert ^{2}\right)
\label{usn}\\
&  =C^{2}\left(  \left\Vert v_{S}^{\left(  1/2\right)  }\right\Vert
^{2}+\left\Vert u_{S}^{\left(  1\right)  }\right\Vert ^{2}\right) \nonumber
\end{align}
which shows the $L^{2}\left(  \Omega\right)  $-stability of the method.
\end{proof}

\subsubsection{Error Estimates}

In this section we first estimate the discrete error $e_{u,S,\Delta t}^{\left(
n+1\right)  }$. Standard estimates on the semi-discrete error then lead to an estimate of the total error $e_u^{(n+1)}$.

\begin{theorem}
\label{Theotimedisc}Let the assumptions of Lemma \ref{LemapH1est} be
satisfied. Let the solution of the semi-discrete equation (\ref{spacedisc})
satisfy $u_{S}\in W^{5,\infty}\left(  \left[  0,T\right]  ;L^{2}\left(
\Omega\right)  \right)  $ and the right-hand side $f_{S}\in W^{3,\infty
}\left(  \left[  0,T\right]  ;L^{2}\left(  \Omega\right)  \right)  $. Then the
fully discrete solution $u_{S}^{\left(  n+1\right)  }$ of
(\ref{leap_frog_lts_fd}) satisfies the error estimate%
\[
\left\Vert e_{u,S,\Delta t}^{\left(  n+1\right)  }\right\Vert \leq C\Delta
t^{2}\left(  1+T\right)  \mathcal{M}\left(  u_{S},f_{S}\right)
\]
with
\begin{equation}
\mathcal{M}\left(  u_{S},f_{S}\right)  :=\max\left\{  \max_{1\leq\ell\leq
3}\left\Vert \partial_{t}^{\ell}f_{S}\right\Vert _{L^{\infty}\left(  \left[
0,T\right]  ;L^{2}\left(  \Omega\right)  \right)  },\max_{3\leq\ell\leq
5}\left\Vert \partial_{t}^{\ell}u_{S}\right\Vert _{L^{\infty}\left(  \left[
0,T\right]  ;L^{2}\left(  \Omega\right)  \right)  }\right\}  \label{defcalM}%
\end{equation}
and a constant $C$ which is independent of $n$, $\Delta t$, $T$, $h$, $p$, $f_{S}$,
and $u_{S}$.
\end{theorem}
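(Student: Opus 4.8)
The plan is to control the fully-discrete error $\mathbf{e}_{S,\Delta t}^{(n+1)}$ through the error representation \eqref{errorreprfu}, which splits the error into a contribution from the initial data, a sum involving $\mathfrak{S}^{\ell}\operatorname{diff}^{(n-\ell)}$, and the boundary terms $\Delta t\,\mbox{\boldmath$\sigma$}^{(n)}-\Delta t\,\mathfrak{S}^{n}\mbox{\boldmath$\sigma$}^{(1)}$. The first ingredient is a stability bound for powers of $\mathfrak{S}$ in an appropriate norm: by exactly the eigenfunction-expansion argument of Theorem~\ref{TheoStabLeapFrog}, on each eigenspace $\mathbf{S}_p$ has spectral radius $1$ and is diagonalizable (thanks to $(\Delta t)^2\lambda_p^{\max}<4$ from the CFL condition), so there is an equivalent norm in which $\|\mathfrak{S}^{\ell}\|\le C$ uniformly in $\ell$, $\Delta t$, $h$ and $p$. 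One must, however, be slightly careful: the natural quantity controlled by $\mathfrak{S}$ is $\|v\|^2+a_p(u,u)$-type energy, which by Corollary~\ref{Coreigsys} is equivalent (with $p$-independent constants) to $\|v\|^2+\|u\|^2$ only modulo factors of $\lambda_p^{\min}$ and $\lambda_p^{\max}$; so I would instead work componentwise with $(\beta,\lambda_{S,p,k}^{1/2}\chi)$ or simply invoke the matrix norm from \cite{StoerII} directly as in Theorem~\ref{TheoStabLeapFrog}, summing over $k$.

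The second ingredient is the consistency estimate: each $\operatorname{diff}^{(n)}=\mbox{\boldmath$\sigma$}^{(n)}-\mbox{\boldmath$\sigma$}^{(n+1)}$ has a first component which is a second-order centered difference of $u_S$ minus a first difference of $v_S$, and a second component of the form $A_{S,p}^{-1}$ applied to a second-order centered difference of $v_S$ plus a first difference of $f_S$. The crucial point is that the $A_{S,p}^{-1}$ never needs to be resolved into negative powers of $h$: by Corollary~\ref{CorAspL2} we have $\|A_{S,p}^{-1}w\|\le (2/c_{\operatorname{coer}})\|w\|$ uniformly in $p$, so it suffices to bound the $L^2$ norm of the bracketed difference quotients. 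Standard Taylor expansion of the smooth functions $t\mapsto u_S(t)$, $t\mapsto v_S(t)$, $t\mapsto f_S(t)$ about $t_n$ then gives $\|\operatorname{diff}^{(n)}\|\le C\Delta t^2\,\mathcal{M}(u_S,f_S)$: the second-difference-of-$u_S$ term costs $\|\partial_t^2 u_S\|$ but actually the combination with $v_S=\dot u_S$ (to leading order) produces an $O(\Delta t^2)$ cancellation requiring $\|\partial_t^4 u_S\|$; similarly the $v_S$-second-difference term needs $\|\partial_t^3 v_S\|=\|\partial_t^4 u_S\|$ and the $f_S$-difference needs $\|\partial_t f_S\|$ — but to get the $\Delta t^2$ rate for $\operatorname{diff}$ (a difference of consecutive $\mbox{\boldmath$\sigma$}$'s, which are themselves $O(\Delta t)$ quantities divided by $\Delta t$) one differentiates once more, explaining the appearance of derivatives up to order $5$ in $u_S$ and order $3$ in $f_S$ in $\mathcal{M}$. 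Likewise $\|\mbox{\boldmath$\sigma$}^{(n)}\|\le C\Delta t\,\mathcal{M}(u_S,f_S)$ by the same Taylor arguments together with Corollary~\ref{CorAspL2}.

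Putting the pieces together: the sum $\Delta t\sum_{\ell=1}^{n-1}\mathfrak{S}^{\ell}\operatorname{diff}^{(n-\ell)}$ is bounded by $\Delta t\cdot n\cdot C\cdot C\Delta t^2\mathcal{M}\le C T\Delta t^2\mathcal{M}$, using $n\Delta t\le T$ and the uniform bound on $\|\mathfrak{S}^{\ell}\|$; the boundary terms $\Delta t\,\mbox{\boldmath$\sigma$}^{(n)}$ and $\Delta t\,\mathfrak{S}^n\mbox{\boldmath$\sigma$}^{(1)}$ each contribute $\le C\Delta t^2\mathcal{M}$; and the initial term $\mathfrak{S}^n(e_{v,S,\Delta t}^{(1/2)},e_{u,S,\Delta t}^{(1)})^{\intercal}$ is bounded once one checks, by a direct Taylor expansion of the starting step \eqref{eq1} (third and fourth lines) against $u_S(0)$, $v_S(t_{1/2})$, that $\|e_{v,S,\Delta t}^{(1/2)}\|+\|e_{u,S,\Delta t}^{(1)}\|\le C\Delta t^2\mathcal{M}$ — here one uses that $u_S(t_1)=u_S(0)+\Delta t\,v_S(t_{1/2})+O(\Delta t^3)$ and that $v_S(t_{1/2})=v_0+\tfrac{\Delta t}{2}(\dot v_S(0))+O(\Delta t^3)$ with $\dot v_S(0)$ matching $f_S^{(0)}-A_S u_0$ via the semi-discrete equation. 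Summing and using equivalence of norms on $\mathbb{R}^2$ yields $\|e_{u,S,\Delta t}^{(n+1)}\|\le C\Delta t^2(1+T)\mathcal{M}(u_S,f_S)$ with $C$ independent of $n,\Delta t,h,p,T,u_S,f_S$, as claimed. The main obstacle, and the reason the earlier lemmas were set up so carefully, is ensuring $p$-independence and avoiding inverse inequalities: this hinges entirely on Corollary~\ref{CorAspL2} (the $p$-uniform $L^2$-bound on $A_{S,p}^{-1}$) so that the $A_{S,p}^{-1}$ appearing in the second component of $\mbox{\boldmath$\sigma$}$ and $\operatorname{diff}$ is absorbed cleanly, and on the $p$-independence of the $\mathfrak{S}^{\ell}$ bound coming from $(\Delta t)^2\lambda_p^{\max}<4$; the truncation-error Taylor expansions themselves are routine but must be organized so that only $\|\partial_t^\ell u_S\|$, $3\le\ell\le5$, and $\|\partial_t^\ell f_S\|$, $1\le\ell\le3$, enter.
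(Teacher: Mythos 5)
Your overall strategy coincides with the paper's proof: you use the error representation (\ref{errorreprfu}), the uniform bound on powers of $\mathfrak{S}$ from Theorem \ref{TheoStabLeapFrog}, a consistency estimate for $\operatorname*{diff}^{(n)}$ and the $\sigma$-terms, and a Taylor estimate for the initial error. However, there is a genuine gap at exactly the crux of the theorem, namely your claim that $\left\Vert \operatorname*{diff}^{(n)}\right\Vert _{\ell^{1}}\leq C\Delta t^{2}\mathcal{M}(u_{S},f_{S})$ follows from ``standard Taylor expansion'' plus the uniform bound $\left\Vert A_{S,p}^{-1}w\right\Vert \leq\frac{2}{c_{\operatorname*{coer}}}\left\Vert w\right\Vert$ of Corollary \ref{CorAspL2}. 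Taylor expansion does produce the cancellation in the first component, but in the second component the leading (Taylor-limit) term is
\[
\psi:=-\dot{u}_{S}\left(  t_{n+1/2}\right)  +A_{S,p}^{-1}\left(  -\ddot{v}_{S}\left(  t_{n+1/2}\right)  +\dot{f}_{S}\left(  t_{n+1/2}\right)  \right)  =\left(  A_{S,p}^{-1}A_{S}-I_{S}\right)  \dot{u}_{S}\left(  t_{n+1/2}\right)  ,
\]
where the identity uses $-\ddot{v}_{S}+\dot{f}_{S}=A_{S}\dot{u}_{S}$ from the differentiated semi-discrete equation. This term is not a Taylor remainder: it vanishes only if $A_{S,p}=A_{S}$, i.e.\ without local time-stepping, and Corollary \ref{CorAspL2} by itself only yields an $O(1)$ bound for it. Its $O(\Delta t^{2})$ size, \emph{uniformly in $h$ and $p$}, is precisely what must be proved, and it cannot be obtained by bounding $A_{S,p}^{-1}$ and the difference quotients separately.

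What is needed (and is the main technical work in the paper's proof, cf.\ (\ref{titb}) and the estimate of $\psi$) is to exploit the structure of $A_{S,p}-A_{S}$: write $A_{S,p}=A_{S}\bigl(I_{S}-\tfrac{2}{p^{2}}\sum_{j=1}^{p-1}\alpha_{j}^{p}(\tfrac{\Delta t}{p})^{2j}(R_{\mathcal{N}}A_{S})^{j}\bigr)$, so that $\psi$ becomes the resolvent $(I_{S}-\cdot)^{-1}$, bounded by $2$ via (\ref{L2Ap}), applied to the perturbation series; then split off exactly one factor $(\tfrac{\Delta t}{p})^{2}R_{\mathcal{N}}^{1/2}A_{S}$, bound the remaining Chebyshev-type sum by Lemma \ref{LemTschebaschev} ($C_{p}^{\prime}\leq p^{2}/12$), absorb the resulting factor $\Delta t^{2}\bigl(1+C_{\operatorname*{inv}}^{2}C_{\operatorname*{qu}}^{2}h^{-2}\bigr)$ through the CFL condition (\ref{CFL}), and finally replace $A_{S}\dot{u}_{S}=\dot{f}_{S}-\partial_{t}^{3}u_{S}$ so that only the norms entering $\mathcal{M}(u_{S},f_{S})$ appear. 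Without this ``judicious splitting'' your argument either leaves an $O(1)$ consistency error or, if one expands $(R_{\mathcal{N}}A_{S})^{j}$ crudely with inverse inequalities, produces negative powers of $h$ and $p$-dependent constants. The same issue recurs in your estimate $\left\Vert \sigma^{(n)}\right\Vert _{\ell^{1}}\leq C\Delta t\,\mathcal{M}$: the second component of $\Delta t\,\sigma^{(n)}$ contains the $O(1)$ combination $u_{S}(t_{n})+A_{S,p}^{-1}\bigl(\text{difference quotient of }v_{S}-f_{S}^{(n)}\bigr)$, which must first be cancelled using the semi-discrete equation (and, strictly, the same $A_{S,p}$-versus-$A_{S}$ perturbation argument) before Corollary \ref{CorAspL2} and Taylor can be invoked. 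The stability part, the summation over $\ell$ giving the factor $T$, and the initial-error estimate in your proposal are fine and follow the paper.
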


\begin{proof}
We apply the stability estimate to the second component of the error
representation (\ref{errorreprfu}). From Theorem \ref{TheoStabLeapFrog} and
(\ref{defboldsigmab}) we obtain\footnote{For a pair of functions
$\mathbf{v}=\left(  v_{1},v_{2}\right)  ^{\intercal}\in S^{2}$ we use the
notation $\left\Vert \mathbf{v}\right\Vert _{\ell^{1}}:=\left\Vert
v_{1}\right\Vert +\left\Vert v_{2}\right\Vert $.}%
\begin{align}
\left\Vert e_{u,S,\Delta t}^{\left(  n+1\right)  }\right\Vert  &  \leq
C_{0}\left\Vert \mathbf{e}_{S,\Delta t}^{\left(  1\right)  }\right\Vert
_{\ell^{1}}+C_{0}\Delta t\sum_{\ell=1}^{n-1}\left\Vert \operatorname*{diff}%
\nolimits^{\left(  n-\ell\right)  }\right\Vert _{\ell^{1}}
\label{eestmainform}\\
&  +\Delta t\left\Vert
\mbox{\boldmath$ \sigma$}%
^{\left(  n\right)  }\right\Vert _{\ell^{1}}+C_{0}\Delta t\left\Vert
\mbox{\boldmath$ \sigma$}%
^{\left(  1\right)  }\right\Vert _{\ell^{1}}.\nonumber
\end{align}
For the summands in the second term of the right-hand side in
(\ref{eestmainform}), we obtain by a Taylor argument and Corollary
\ref{CorAspL2}%
\begin{equation}
\operatorname*{diff}\nolimits^{\left(  n\right)  }=\left(
\begin{array}
[c]{c}%
0\\
-\dot{u}_{S}\left(  t_{n+1/2}\right)  +A_{S,p}^{-1}\left(  -\ddot{v}%
_{S}\left(  t_{n+1/2}\right)  +\dot{f}_{S}\left(  t_{n+1/2}\right)  \right)
\end{array}
\right)  +\frac{\left(  \Delta t\right)  ^{2}}{24}\mathcal{E}_{n}%
^{\operatorname{I}} \label{titb}%
\end{equation}
with%
\[
\left\Vert \mathcal{E}_{n}^{\operatorname{I}}\right\Vert _{\ell^{1}}%
\leq2\left(  1+\frac{3}{c_{\operatorname*{coer}}}\right)  \mathcal{M}%
_{n}\left(  u_{S},f_{S}\right)
\]
and%
\[
\mathcal{M}_{n}\left(  u_{S},f_{S}\right)  :=\max\left\{  \max_{1\leq\ell
\leq3}\left\Vert \partial_{t}^{\ell}f_{S}\right\Vert _{L^{\infty}\left(
\left[  t_{n},t_{n+1}\right]  ;L^{2}\left(  \Omega\right)  \right)  }%
,\max_{3\leq\ell\leq5}\left\Vert \partial_{t}^{\ell}u_{S}\right\Vert
_{L^{\infty}\left(  \left[  t_{n-1/2},t_{n+2}\right]  ;L^{2}\left(
\Omega\right)  \right)  }\right\}  .
\]

Now, let $\psi$ denote the second component of the first term in the right-hand side of
(\ref{titb}),
\[
\psi   :=-\dot{u}_{S}\left(  t_{n+1/2}\right)  +A_{S,p}^{-1}\left(  -\ddot
{v}_{S}\left(  t_{n+1/2}\right)  +\dot{f}_{S}\left(  t_{n+1/2}\right)
\right) .
\]
By using $\ddot{u}_{S}+A_{S}u_{S}=f_{S}$  (cf. (\ref{spacedisca}) and
(\ref{defASp})) we obtain%
\begin{align*}
\psi &
 =-\partial_{t}\left(  u_{S}\left(  t_{n+1/2}\right)  -A_{S,p}%
^{-1}A_{S}u_{S}\left(  t_{n+1/2}\right)  \right)  \\
&  =\frac{2}{p^{2}}A_{S,p}^{-1}\sum_{j=1}^{p-1}\alpha_{j}^{p}\left(
\frac{\Delta t}{p}\right)  ^{2j}\left(  A_{S}R_{\mathcal{N}}\right)  ^{j}%
A_{S}\dot{u}_{S}\left(  t_{n+1/2}\right)  \\
&  =\left(  I_{S}-\frac{2}{p^{2}}\sum_{j=1}^{p-1}\alpha_{j}^{p}\left(
\frac{\Delta t}{p}\right)  ^{2j}\left(  R_{\mathcal{N}}A_{S}\right)
^{j}\right)  ^{-1}\frac{2\left(  \Delta t\right)  ^{2}}{p^{4}}R_{\mathcal{N}%
}\sum_{j=1}^{p-1}\alpha_{j}^{p}\left(  \frac{\Delta t}{p}\right)  ^{2\left(
j-1\right)  }\left(  A_{S}R_{\mathcal{N}}\right)  ^{j-1}A_{S}\dot{u}%
_{S}\left(  t_{n+1/2}\right)  .
\end{align*}
{We employ (\ref{L2Ap}) and argue as in the proof of }Corollary \ref{CorAspL2}
to obtain%
\begin{align*}
\left\Vert \psi\right\Vert  &  \leq2\left\Vert R_{\mathcal{N}}^{1/2}\frac
{2}{p^{2}}\sum_{j=1}^{p-1}\alpha_{j}^{p}\left(  \frac{\Delta t}{p}\right)
^{2\left(  j-1\right)  }\left(  R_{\mathcal{N}}^{1/2}A_{S}R_{\mathcal{N}%
}^{1/2}\right)  ^{j-1}\left(  \frac{\Delta t}{p}\right)  ^{2}R_{\mathcal{N}%
}^{1/2}A_{S}\,\dot{u}_{S}\left(  t_{n+1/2}\right)  \right\Vert \\
&  \leq 2 \frac{\left(  \Delta t\right)  ^{2}}{12\,c_{\operatorname*{eq}}^{2}%
}\left\Vert A_{S}\,\dot{u}_{S}\left(  t_{n+1/2}\right)  \right\Vert.
\end{align*}
This yields%
\begin{align*}
\left\Vert -\dot{u}_{S}\left(  t_{n+1/2}\right)  +A_{S,p}^{-1}\left(
-\ddot{v}_{S}\left(  t_{n+1/2}\right)  +\dot{f}_{S}\left(  t_{n+1/2}\right)
\right)  \right\Vert  &  \leq\frac{\left(  \Delta t\right)  ^{2}%
}{6c_{\operatorname*{eq}}^{2}}\left\Vert A_{S}\dot{u}_{S}\left(
t_{n+1/2}\right)  \right\Vert \\
&  \leq\frac{\left(  \Delta t\right)  ^{2}}{6c_{\operatorname*{eq}}^{2}%
}\left(  \left\Vert \partial_{t}^{3}u_{S}\left(  t_{n+1/2}\right)  \right\Vert
+\left\Vert \dot{f}_{S}^{\left(  n+1/2\right)  }\right\Vert \right)  .
\end{align*}

In summary we have proved%
\[
\left\Vert \operatorname*{diff}\nolimits^{\left(  n\right)  }\right\Vert
_{\ell^{1}}\leq\frac{\left(  \Delta t\right)  ^{2}}{12}\left(  1+\frac
{8}{c_{\operatorname*{eq}}^{2}}+\frac{3}{c_{\operatorname*{coer}}}\right)
\mathcal{M}_{n}\left(  u_{S},f_{S}\right)  .
\]

Next, we estimate the remaining terms in (\ref{eestmainform}). We employ the
discrete wave equation and a Taylor argument to obtain%
\begin{align}
\Delta t\left\Vert
\mbox{\boldmath$ \sigma$}%
^{\left(  n\right)  }\right\Vert _{\ell^{1}}  &  \leq\frac{\left(  \Delta
t\right)  ^{2}}{24}\left\Vert \partial_{t}^{3}u_{S}\right\Vert _{L^{\infty
}\left(  \left[  t_{n},t_{n+1}\right]  ;L^{2}\left(  \Omega\right)  \right)
}\label{boldsigman}\\
&  +\left\Vert A_{S,p}^{-1}\left(  \underset{=0}{\underbrace{A_{S,p}%
u_{S}\left(  t_{n}\right)  +\ddot{u}_{S}\left(  t_{n}\right)  -f_{S}^{\left(
n\right)  }}}+\frac{\dot{u}_{S}\left(  t_{n+1/2}\right)  -\dot{u}_{S}\left(
t_{n-1/2}\right)  }{\Delta t}-\ddot{u}_{S}\left(  t_{n}\right)  \right)
\right\Vert \\
&  \overset{\text{Cor. \ref{CorAspL2}}}{\leq}\frac{\left(  \Delta t\right)
^{2}}{24}\left\Vert \partial_{t}^{3}u_{S}\right\Vert _{L^{\infty}\left(
\left[  t_{n},t_{n+1}\right]  ;L^{2}\left(  \Omega\right)  \right)
}\nonumber\\
&  +\frac{2}{c_{\operatorname*{coer}}}\left\Vert \frac{\dot{u}_{S}\left(
t_{n+1/2}\right)  -\dot{u}_{S}\left(  t_{n-1/2}\right)  }{\Delta t}-\ddot
{u}_{S}\left(  t_{n}\right)  \right\Vert \\
&  \leq\frac{\left(  \Delta t\right)  ^{2}}{24}\left\Vert \partial_{t}%
^{3}u_{S}\right\Vert _{L^{\infty}\left(  \left[  t_{n},t_{n+1}\right]
;L^{2}\left(  \Omega\right)  \right)  }+\frac{2}{c_{\operatorname*{coer}}%
}\frac{(\Delta t)^{2}}{24}\left\Vert \partial_{t}^{4}u_{S}\right\Vert
_{L^{\infty}\left(  \left[  t_{n},t_{n+1}\right]  ;L^{2}\left(  \Omega\right)
\right)  }\nonumber\\
&  \leq\frac{\left(  \Delta t\right)  ^{2}}{24}\left(  1+\frac{2}%
{c_{\operatorname*{coer}}}\right)  \mathcal{M}_{n}\left(  u_{S},f_{S}\right)
.\nonumber
\end{align}
The estimate of the last term in (\ref{eestmainform}) follows by setting $n=1$
in (\ref{boldsigman})%
\[
C_{0}\Delta t\left\Vert
\mbox{\boldmath$ \sigma$}%
^{\left(  1\right)  }\right\Vert _{\ell^{1}}\leq C_{0}\frac{\left(  \Delta
t\right)  ^{2}}{24}\left(  1+\frac{2}{c_{\operatorname*{coer}}}\right)
\mathcal{M}_{1}\left(  u_{S},f_{S}\right)  .
\]
Inserting these estimates into (\ref{eestmainform}) leads to%
\begin{align}
\left\Vert e_{u,S,\Delta t}^{\left(  n+1\right)  }\right\Vert  &  \leq
C_{0}\left\Vert \mathbf{e}_{S,\Delta t}^{\left(  1\right)  }\right\Vert
_{\ell^{1}}+C_{0}\frac{\left(  \Delta t\right)  ^{2}}{12}\left(  1+\frac
{8}{c_{\operatorname*{eq}}^{2}}+\frac{3}{c_{\operatorname*{coer}}}\right)
\Delta t\sum_{\ell=1}^{n-1}\mathcal{M}_{n-\ell}\left(  u_{S},f_{S}\right)
\label{ruspu}\\
&  +\frac{\left(  \Delta t\right)  ^{2}}{24}\left(  1+\frac{2}%
{c_{\operatorname*{coer}}}\right)  \left(  \mathcal{M}_{n}\left(  u_{S}%
,f_{S}\right)  +C_{0}\mathcal{M}_{1}\left(  u_{S},f_{S}\right)  \right) \\
&  \leq C_{0}\left\Vert \mathbf{e}_{S,\Delta t}^{\left(  1\right)
}\right\Vert _{\ell^{1}}+\frac{\left(  \Delta t\right)  ^{2}}{12}\left(
C_{0}T\left(  1+\frac{8}{c_{\operatorname*{eq}}^{2}}+\frac{3}%
{c_{\operatorname*{coer}}}\right)  +\left(  1+\frac{2}{c_{\operatorname*{coer}%
}}\right)  \frac{1+C_{0}}{2}\right)  \mathcal{M}\left(  u_{S},f_{S}\right)
\end{align}

It remains to estimate the initial error $\mathbf{e}_{S,\Delta t}^{\left(
1\right)  }$. Let $u_{S}^{\left(  0\right)  }:=u_{S}\left(  0\right)  $ and
$v_{S}^{\left(  0\right)  }:=\dot{u}_{S}\left(  0\right)  \in S$ be as in
(\ref{spacediscb}). A Taylor argument for some $0\leq\theta\leq\tau\leq\Delta
t$ and the definition of $u_{S}^{\left(  0\right)  }$, $u_{S}^{\left(
1\right)  }$ as in (\ref{leap_frog_lts_fd}) lead to
\begin{align}
\left\Vert u_{S}\left(  t_{1}\right)  -u_{S}^{\left(  1\right)  }\right\Vert
&  \leq\left\Vert \left(  u_{S}^{\left(  0\right)  }+\left(  \Delta t\right)
v_{S}^{\left(  0\right)  }+\frac{\Delta t^{2}}{2}\ddot{u}_{S}\left(
\tau\right)  \right)  -\left(  u_{S}^{\left(  0\right)  }+\left(  \Delta
t\right)  v_{S}^{\left(  0\right)  }+\frac{\Delta t^{2}}{2}\left(
f_{S}^{\left(  0\right)  }-A_{S}u_{S}^{\left(  0\right)  }\right)  \right)
\right\Vert \label{initerr1}\\
&  =\frac{\Delta t^{2}}{2}\left\Vert f_{S}\left(  \tau\right)  -f_{S}^{\left(
0\right)  }-A_{S}\left(  u_{S}\left(  \tau\right)  -u_{S}^{\left(  0\right)
}\right)  \right\Vert \nonumber\\
&  \leq\frac{\Delta t^{3}}{2}\left(  \left\Vert \dot{f}_{S}\right\Vert
_{L^{\infty}\left(  \left[  0,\Delta t\right]  ;L^{2}\left(  \Omega\right)
\right)  }+\left\Vert A_{S}\dot{u}_{S}\left(  \theta\right)  \right\Vert
\right)  \nonumber\\
&  \leq\frac{\Delta t^{3}}{2}\left(  2\left\Vert \dot{f}_{S}\right\Vert
_{L^{\infty}\left(  \left[  0,\Delta t\right]  ;L^{2}\left(  \Omega\right)
\right)  }+\left\Vert \partial_{t}^{3}u_{S}\right\Vert _{L^{\infty}\left(
\left[  0,\Delta t\right]  ;,L^{2}\left(  \Omega\right)  \right)  }\right)
\nonumber\\
&  \leq\frac{3}{2}\Delta t^{3}\mathcal{M}\left(  u_{S},f_{S}\right)
.\nonumber
\end{align}
For the initial error in $v_{S}$ we obtain by a similar Taylor argument%
\begin{align}
\left\Vert v_{S}\left(  t_{1/2}\right)  -v_{S}^{\left(  1/2\right)
}\right\Vert  &  =\left\Vert \dot{u}_{S}\left(  t_{1/2}\right)  -v_{S}%
^{\left(  0\right)  }-\frac{\Delta t}{2}\left(  f_{S}^{\left(  0\right)
}-A_{S}u_{S,0}\right)  \right\Vert \label{initerr2}\\
&  =\frac{\Delta t}{2}\left\Vert \ddot{u}_{S}\left(  \tau\right)  +A_{S}%
u_{S}^{\left(  0\right)  }-f_{S}^{\left(  0\right)  }\right\Vert \nonumber\\
&  =\frac{\Delta t}{2}\left\Vert \ddot{u}_{S}\left(  \tau\right)  +A_{S}%
u_{S}\left(  \tau\right)  -f_{S}\left(  \tau\right)  +A_{S}\left(
u_{S}^{\left(  0\right)  }-u_{S}\left(  \tau\right)  \right)  +f_{S}\left(
\tau\right)  -f_{S}^{\left(  0\right)  }\right\Vert \nonumber\\
&  \leq\frac{\left(  \Delta t\right)  ^{2}}{2}\left(  \left\Vert \partial
_{t}^{3}u_{S}\right\Vert _{L^{\infty}\left(  \left[  0,\Delta t\right]
;L^{2}\left(  \Omega\right)  \right)  }+2\left\Vert \dot{f}_{S}\right\Vert
_{L^{\infty}\left(  \left[  0,\Delta t\right]  ;L^{2}\left(  \Omega\right)
\right)  }\right)  \nonumber\\
&  \leq\frac{3\left(  \Delta t\right)  ^{2}}{2}\mathcal{M}\left(  u_{S}%
,f_{S}\right)  .\nonumber
\end{align}

In summary, we have estimated the initial error by%
\begin{equation}
\left\Vert \mathbf{e}_{S,\Delta t}^{\left(  1\right)  }\right\Vert _{\ell^{1}%
}\leq\frac{3\left(  \Delta t\right)  ^{2}}{2}\left(  1+\Delta t\right)
\mathcal{M}\left(  u_{S},f_{S}\right)  . \label{initerror}%
\end{equation}
The combination of (\ref{ruspu}) and (\ref{initerror}) leads to the assertion.
\end{proof}

Theorem \ref{Theotimedisc} can be combined with known error estimates for the
semi-discrete error $\mathbf{e}_{S}^{\left(  n+1\right)  }$ to obtain an error
estimate of the total error.

\begin{theorem}
Let the bilinear form $a\left(  \cdot,\cdot\right)  $ satisfy (\ref{wellposed}%
) and let the CFL condition (\ref{CFL}) hold. Assume that the exact solution
satisfies
$u\in W^{1,\infty}\left(  \left[  0,T\right]  ;H^{m+1}\left(
\Omega\right)  \right)  \cap W^{5,\infty}\left(  \left[  0,T\right]
;L^{2}\left(  \Omega\right)  \right)  $.
Then, the corresponding fully discrete Galerkin FE formulation
with local time-stepping (\ref{leap_frog_lts_fd}) has a unique
solution $u_{S}^{\left(  n+1\right)  }$ which satisfies the error estimate%
\[
\left\Vert u(t_{n+1}) - u_{S}^{\left(  n+1\right)  }\right\Vert \leq C\left(  1+T\right)
\left(  h^{m+1}+\Delta t^{2}\right)  \mathcal{M}\left(  u,u_{S},f_{S}\right)
\]
with%
\[
\mathcal{M}\left(  u,u_{S},f_{S}\right)  :=\max\left\{  \mathcal{M}\left(
u_{S},f_{S}\right)  ,\left\Vert u\right\Vert _{W^{1,\infty}\left(  \left[
0,T\right]  ;H^{m+1}\left(  \Omega\right)  \right)  }\right\}
\]
and a constant $C$ which is independent of $n$, $\Delta t$, $h$, $p$, $f_{S}$,
$u_{S}$, and the final time $T$.
\end{theorem}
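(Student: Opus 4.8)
The plan is to use the decomposition $e_u^{(n+1)} = e_{u,S}^{(n+1)} + e_{u,S,\Delta t}^{(n+1)}$ already introduced in (\ref{defspliterror}), so that by the triangle inequality
\[
\bigl\Vert u(t_{n+1}) - u_S^{(n+1)}\bigr\Vert \le \bigl\Vert e_{u,S}^{(n+1)}\bigr\Vert + \bigl\Vert e_{u,S,\Delta t}^{(n+1)}\bigr\Vert ,
\]
and to bound the two summands by, respectively, a classical semi-discrete FE estimate and Theorem \ref{Theotimedisc}. For the time-discretization part, Theorem \ref{Theotimedisc} directly gives $\Vert e_{u,S,\Delta t}^{(n+1)}\Vert \le C\Delta t^2(1+T)\mathcal{M}(u_S,f_S)$ with a constant independent of $n,\Delta t,h,p,f_S,u_S$, provided its hypotheses $u_S\in W^{5,\infty}([0,T];L^2(\Omega))$ and $f_S\in W^{3,\infty}([0,T];L^2(\Omega))$ hold. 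The latter I would deduce from the temporal regularity of $F$ (equivalently, from finiteness of $\mathcal{M}(u_S,f_S)$), and the former then follows by differentiating the semi-discrete identity $\ddot u_S = f_S - A_S u_S$ in time: on the fixed finite-dimensional space $S$ this expresses $\partial_t^4 u_S$ and $\partial_t^5 u_S$ through $u_S$, $\dot u_S$ and the first three time derivatives of $f_S$.

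For the semi-discrete error $e_{u,S}^{(n+1)} = u(t_{n+1}) - u_S(t_{n+1})$ I would invoke the classical a priori convergence estimate for conforming Galerkin FE discretizations of the second-order wave equation from \cite{Baker}. With the discrete initial data prescribed in (\ref{spacediscb}), namely the $L^2(\Omega)$-projections of $u_0$ and $v_0$ onto $S = V_{\mathcal T}^m$, and under the assumed regularity $u\in W^{1,\infty}([0,T];H^{m+1}(\Omega))\cap W^{5,\infty}([0,T];L^2(\Omega))$, this yields
\[
\bigl\Vert u(t) - u_S(t)\bigr\Vert \le C(1+T)\,h^{m+1}\,\Vert u\Vert_{W^{1,\infty}([0,T];H^{m+1}(\Omega))} \qquad \forall t\in[0,T],
\]
in particular at $t=t_{n+1}$, with a constant depending only on $m$, the shape regularity $\gamma(\mathcal T)$ and the constants in (\ref{wellposed}); this part carries no dependence on $p$ since the semi-discrete problem does not involve local time-stepping at all.

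It remains to combine the two bounds: replacing $\Vert u\Vert_{W^{1,\infty}([0,T];H^{m+1}(\Omega))}$ and $\mathcal{M}(u_S,f_S)$ by their maximum $\mathcal{M}(u,u_S,f_S)$ and merging the two prefactors into one generic constant $C$ gives the claimed estimate with factor $(1+T)(h^{m+1}+\Delta t^2)$. Uniqueness of the fully discrete solution is immediate: the recursion (\ref{leap_frog_lts_fd}) --- equivalently Algorithm \ref{LTS-LF Galerkin FE Algorithm} --- only requires inverting the symmetric positive definite mass matrix $\mathbf{M}$, and the starting values $u_S^{(0)}$, $u_S^{(1)}$ are uniquely fixed by (\ref{spacediscb}), so by induction the whole sequence $(u_S^{(n)})_n$ is determined uniquely. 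The step requiring the most care is the matching of hypotheses and the precise citation: one must ensure that the semi-discrete estimate from \cite{Baker} is stated for exactly the $L^2$-projection initial data of (\ref{spacediscb}) and delivers the optimal $h^{m+1}$ rate in the $L^2(\Omega)$-norm --- not a rate reduced by one power --- since this is where the assumption $u\in W^{1,\infty}([0,T];H^{m+1}(\Omega))$ is consumed; the CFL condition (\ref{CFL}) enters only through Theorem \ref{Theotimedisc} and plays no role in the semi-discrete part.
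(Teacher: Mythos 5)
Your proposal is correct and follows essentially the same route as the paper: the splitting (\ref{defspliterror}) plus triangle inequality, Theorem \ref{Theotimedisc} for the time-discretization error, and the semi-discrete estimate of \cite[Theorem 3.1]{Baker} for the spatial part, where the paper obtains the $(1+T)$ factor by noting that Baker's constant grows like $1+\sqrt{T}$ and applying a H\"older inequality to the $\left\Vert \dot{u}\right\Vert _{L^{2}\left(  \left[  0,T\right]  ;H^{m+1}\left(  \Omega\right)  \right)  }$ term. The only minor differences are that the paper justifies $u_{S}\in W^{5,\infty}\left(  \left[  0,T\right]  ;L^{2}\left(  \Omega\right)  \right)$ by citing \cite{MuellerSchwab16} (the semi-discrete solution inherits the temporal regularity of $u$) rather than by differentiating $\ddot{u}_{S}+A_{S}u_{S}=f_{S}$ as you do, and it deduces existence and uniqueness of the fully discrete solution directly from the semi-discrete well-posedness result rather than by induction on the recursion; both variants are sound.
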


\begin{proof}
The existence of the semi-discrete solution $u_{S}$ follows from \cite[Theorem
3.1]{Baker}, which directly implies the existence of our fully discrete LTS-Galerkin FE solution.

Next, we split the total error \[\mathbf{e}^{\left(  n+1\right)
}=\left(  v\left(  t_{n+1/2}\right)  -v_{S}^{\left(  n+1/2\right)  },u\left(
t_{n+1}\right)  -u_{S}^{\left(  n+1\right)  }\right)  ^{\intercal}\] according
to (\ref{defspliterror}). Following \cite{MuellerSchwab16}, we note that
the semi-discrete solution $u_S$ inherits the same regularity from
$u\in W^{5,\infty}\left(  \left[  0,T\right]  ;L^{2}\left(
\Omega\right)  \right)  $; thus, we can apply Theorem \ref{Theotimedisc}.

To estimate the remaining error from the semi-discretization,
\[\mathbf{e}_{S}^{\left(  n+1\right)  }=\left(  v\left(  t_{n+1/2}\right)  -v_{S}\left(
t_{n+1/2}\right)  ,u\left(  t_{n+1}\right)  -u_{S}\left(  t_{n+1}\right)
\right)  ^{\intercal},\]  we use \cite[Theorem 3.1]{Baker} to obtain
\begin{equation}
\left\Vert u-u_{S}\right\Vert _{L^{\infty}\left(  \left[  0,T\right]
;L^{2}\left(  \Omega\right)  \right)  }\leq Ch^{m+1}\left(  \left\Vert
u\right\Vert _{L^{\infty}\left(  \left[  0,T\right]  ;H^{m+1}\left(
\Omega\right)  \right)  }+\left\Vert \dot{u}\right\Vert _{L^{2}\left(  \left[
0,T\right]  ;H^{m+1}\left(  \Omega\right)  \right)  }\right)
.\label{Bakerest}%
\end{equation}
Inspection of the proof in \cite[Theorem 3.1]{Baker} shows that the
constant in (\ref{Bakerest}) can be estimated by $C\left(  1+\sqrt{T}\right)
$. Using a H\"{o}lder inequality in the second summand of the right-hand side
in (\ref{Bakerest}) thus results in%
\[
\left\Vert \dot{u}\right\Vert _{L^{2}\left(  \left[  0,T\right]
;H^{m+1}\left(  \Omega\right)  \right)  }\leq\sqrt{T}\left\Vert \dot
{u}\right\Vert _{L^{\infty}\left(  \left[  0,T\right]  ;H^{m+1}\left(
\Omega\right)  \right)  },%
\]
from which we conclude that%
\[
\left\Vert u-u_{S}\right\Vert _{L^{\infty}\left(  \left[  0,T\right]
;L^{2}\left(  \Omega\right)  \right)  }\leq C^{\prime}h^{m+1}\left(
1+T\right)  \left\Vert u\right\Vert _{W^{1,\infty}\left(  \left[  0,T\right]
;H^{m+1}\left(  \Omega\right)  \right)  }%
\]
with a constant $C^{\prime}$ which is independent of the final time $T$.
Finally, the triangle inequality leads to the assertion.
\end{proof}

\begin{figure}[th]
\centering
\begin{tabular}
[c]{ccc}%
\includegraphics[width=.29\textwidth]{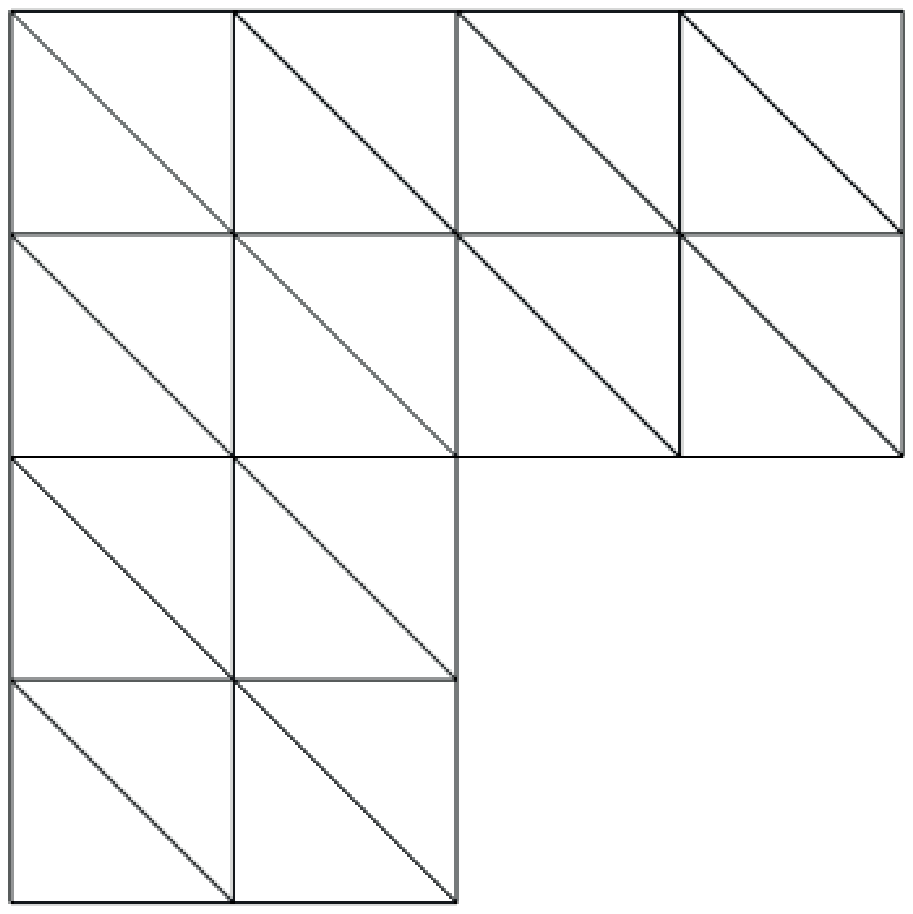} &
\includegraphics[width=.29\textwidth]{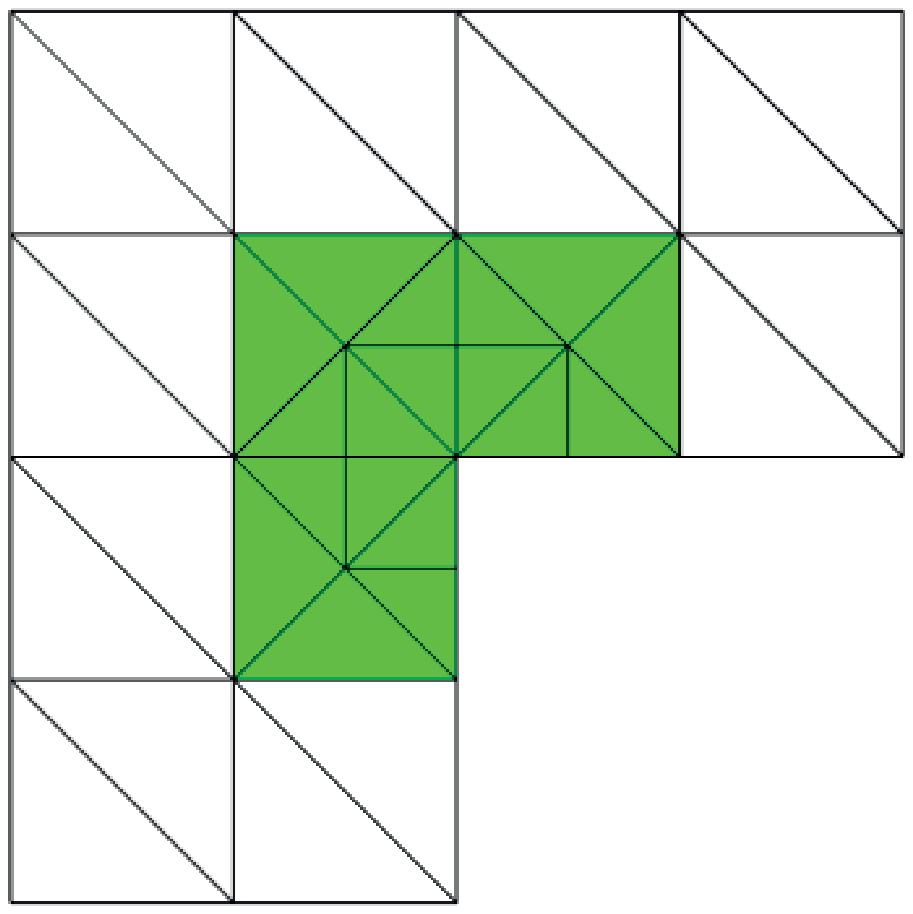} &
\includegraphics[width=.29\textwidth]{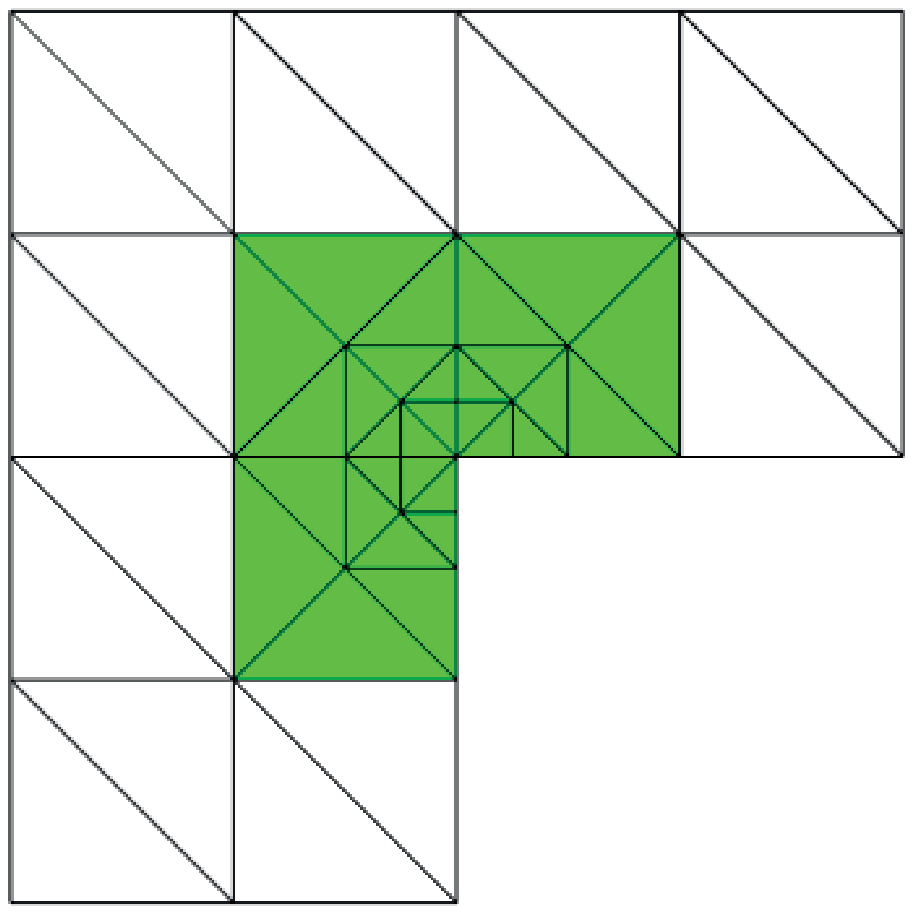}\\
(a) Initial mesh & (b) First refinement & (c) Second refinement
\end{tabular}
\caption{Initial coarse mesh
and local mesh refinement towards re-entrant corner. The fine region (in green) of the final mesh of form (c) always corresponds to the innermost 30 elements.}%
\label{fig:Mesh}%
\end{figure}

\begin{figure}[t]
\centering
\begin{tabular}
[c]{ccc}%
\includegraphics[width=.3\textwidth]{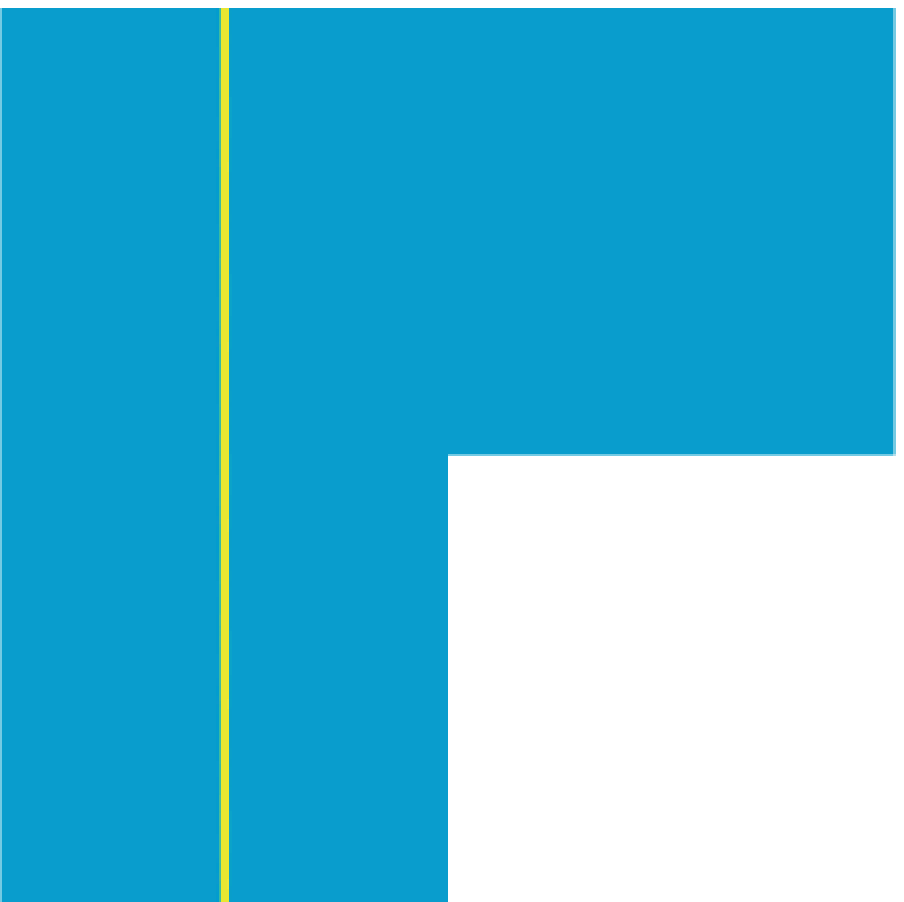} &
\includegraphics[width=.3\textwidth]{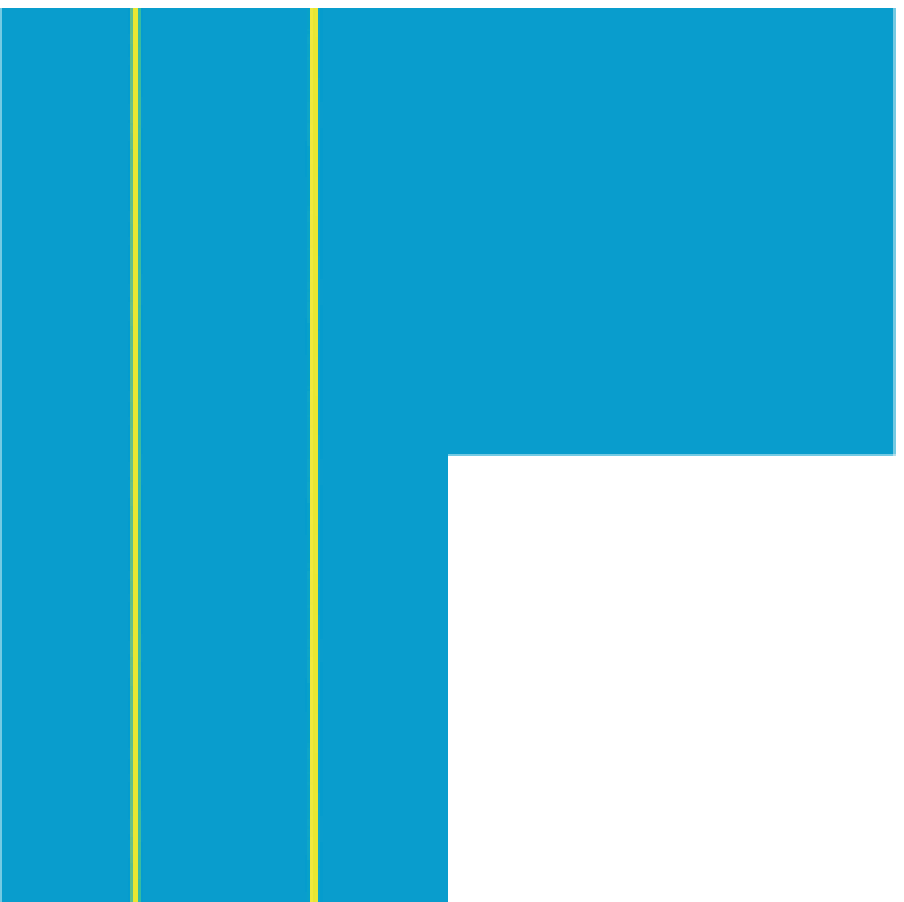} &
\includegraphics[width=.3\textwidth]{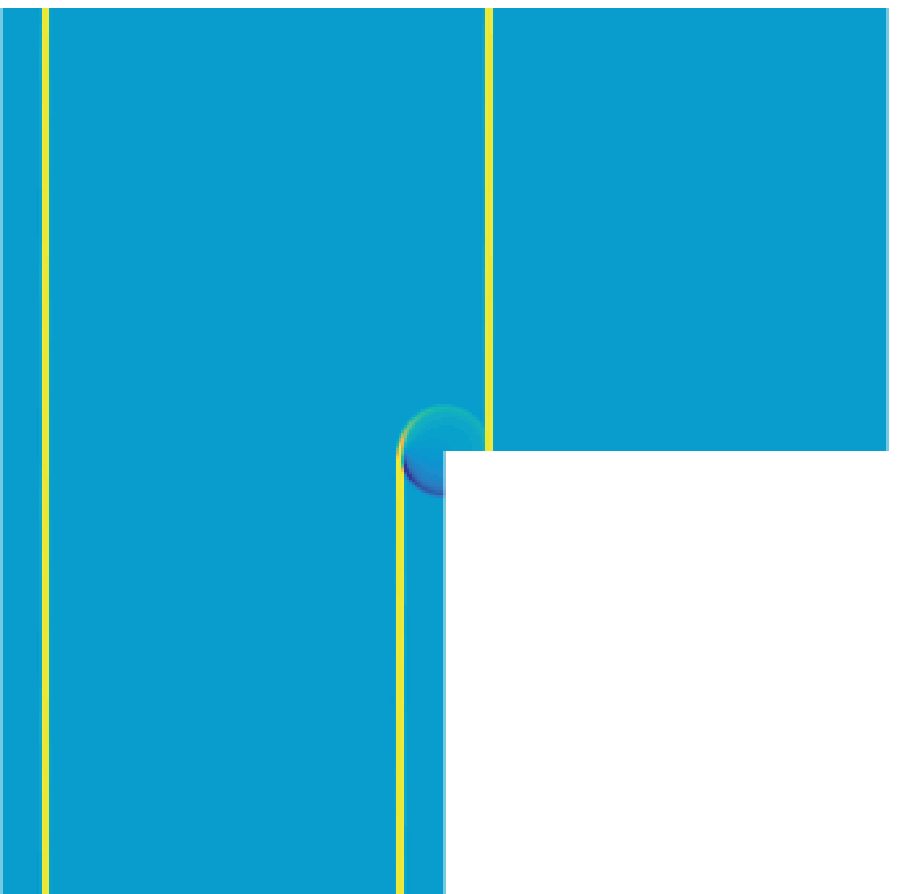}\\
\includegraphics[width=.3\textwidth]{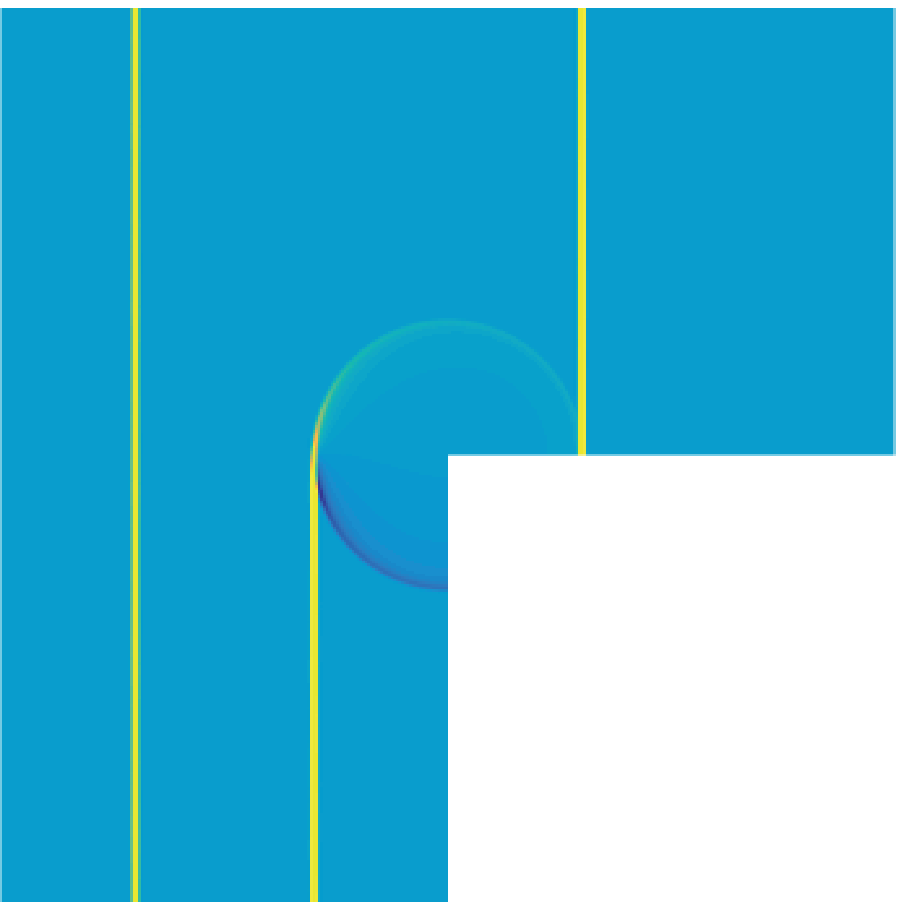} &
\includegraphics[width=.3\textwidth]{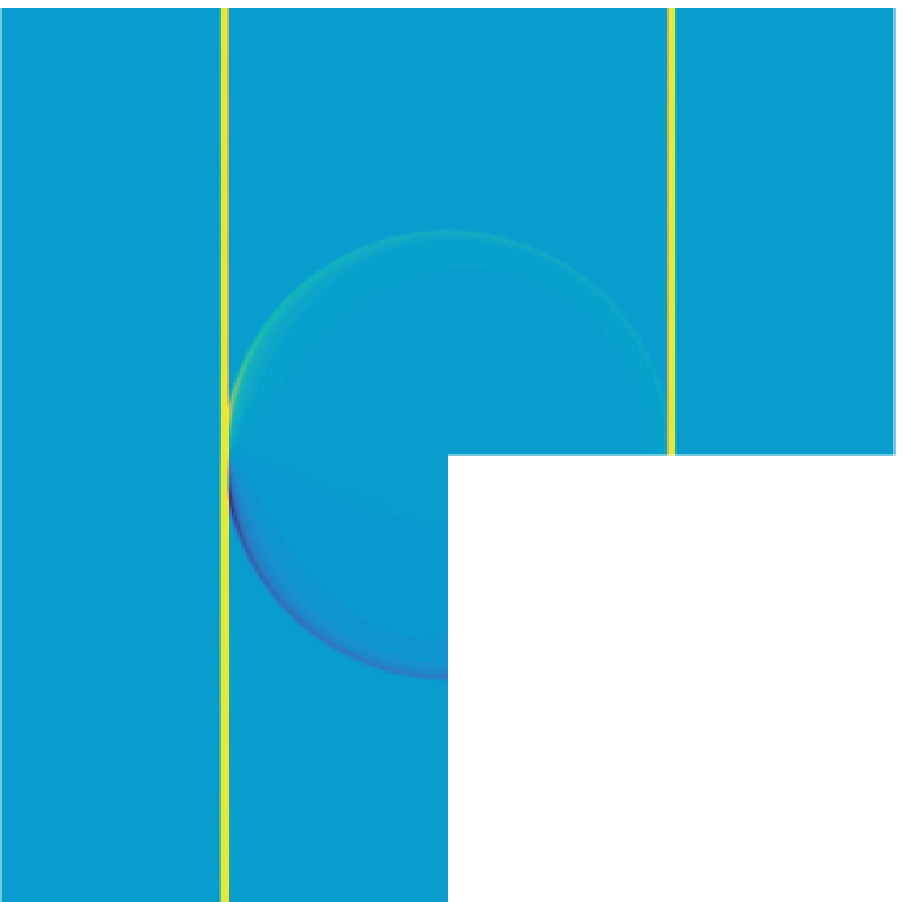} &
\includegraphics[width=.3\textwidth]{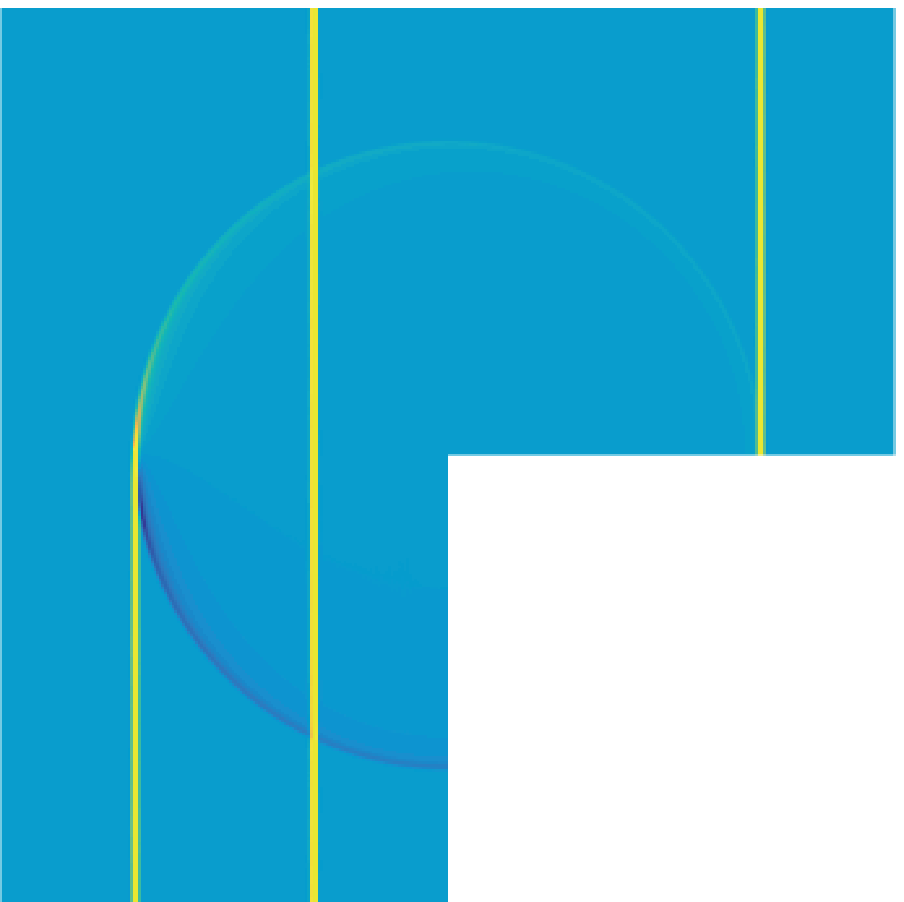}
\end{tabular}
\caption{Snapshots of the numerical solution at time $t = 0,\,
0.1,\,0.3,\,0.4,\,0.5,\,0.6$}%
\label{fig:Sol L shaped}%
\end{figure}
%

\section{Numerical Experiments\label{SecNumExp}}

Numerical experiments that corroborate the convergence rates and illustrate the stability properties of
the LTS-LF scheme when combined with continuous or discontinuous Galerkin FEM \cite{GSS06}
were presented in \cite{DiazGrote09}. Together with its higher order versions, the LTS-LF method
was also successfully applied to other (vector-valued) second-order wave equations from
electromagnetics \cite{Grote_Mitkova} and elasticity \cite{MZKM13,RPSUG15} .
Here we demonstrate the
versatility of the LTS approach in the presence of adaptive mesh refinement near a
re-entrant corner.

To illustrate the usefulness of the LTS approach, we consider the classical
scalar wave equation (Example \ref{Exmodel problem}) in the L-shaped domain
$\Omega$ shown in  Fig. \ref{fig:Mesh}. The re-entrant corner is located at
$(0.5,0.5)$ and we set $c=1$, $f=0$ and the final time $T=2$. Next, we
impose homogeneous Neumann boundary conditions on all
boundaries and choose as initial conditions the vertical Gaussian plane wave
\begin{equation*}
 u_0(x, y)  = \exp \left ( -(x-x_0)^2 / \delta^2 \right ),  \qquad v_0(x, y)  = 0, \qquad\qquad (x,y) \in \Omega\,,
 \end{equation*}
 of width $\delta=10^{-5}$ centered about $x_0 = 0.25$~.
 For the spatial discretization we opt for ${\mathcal P}^2$ continuous
finite elements with mass lumping \cite{CJRT01}.

First, we partition $\Omega$ into equal
triangles of size $h_{\mbox{\scriptsize init}}$ -- see Fig. \ref{fig:Mesh} (a).
Then we bisect the six elements nearest to the corner and subsequently bisect in the resulting mesh all elements with a vertex at $(0.5,0.5)$. Starting from that intermediate mesh, shown in Fig. \ref{fig:Mesh} (b),
we repeat this procedure again with the six elements adjacent to the corner, which
finally yields the mesh shown in Fig. \ref{fig:Mesh} (c). Hence the mesh refinement ratio,
that is the ratio between smallest elements in
the "coarse" and the "fine" regions, in the resulting mesh is 4:1. We therefore choose a four
times smaller time-step $\Delta\tau = \Delta t/p$ with $p = 4$
inside the fine region.

Clearly, this refinement strategy is heuristic, as optimal mesh refinement in the presence of corner singularities generally requires
hierarchical mesh refinement \cite{MS15}. However, when the region of local mesh refinement
itself contains a sub-region of even smaller elements, and so forth, any local time-step
will again be overly restricted due to even smaller elements inside the "fine" region.
To remedy the repeated bottleneck
caused by hierarchical mesh refinement, multi-level local time-stepping methods were proposed
in \cite{DG15,RPSUG15},
which permit the use of the appropriate time-step at every level of mesh refinement.
For simplicity, we restrict ourselves here to the standard (two-level) LTS-LF scheme.

 In Fig. \ref{fig:Sol L shaped} we display snapshots of the
numerical solution at different times: the plane wave splits into two wave fronts travelling in opposite
directions.
The lower half of the right propagating wave is reflected while the upper half proceeds into the
upper left quadrant.
To avoid any loss in the global CFL condition and reach the optimal global time-step,
we always include an overlap by one element, that is, we also advance the numerical solution inside
those elements immediately next to the "fine" region with the fine time-step.

In Fig. \ref{fig: Runtime} we compare the runtime of the LTS-LF($p$) on a
sequence of meshes using the refinement strategy depicted in Fig. \ref{fig:Mesh}, with the
runtime of a standard LF scheme with a time-step $\Delta t/4$ on the entire domain.
As expected, the LTS-LF method is faster than the standard LF scheme, in fact
increasingly so, as the number of refinements increases. Indeed, as the number of degrees
of freedom in the "coarse" region grows much faster than in the "fine" region, where it remains
essentially constant, the use
of local time-stepping becomes increasingly beneficial on finer meshes.

\begin{figure}[t]
\centering
\includegraphics[width = .7\textwidth]{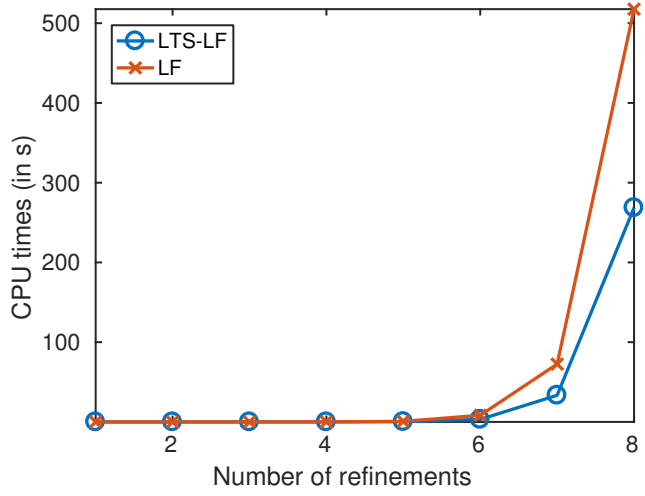}\caption{Comparison of run times between LTS-LF and standard LF vs. number of global refinements with constant coarse/fine mesh size ratio $p=4$.}%
\label{fig: Runtime}%
\end{figure}

{\bf Acknowledgements}

We thank Loredana Gaudio for useful comments and suggestions during the initial stages of this work and Maximillian Matth\"aus for his Matlab program.

\appendix

\section{Some Auxiliary Estimates}

\begin{lemma}
\label{LemTschebaschev} For $p\geq2$ let $\alpha_{j}^{p}$, $j=1,\dots,p-1$, be
recursively defined as in (\ref{rekalpha}). Then, the constants $\alpha
_{j}^{p}$ are given by
\begin{equation}
\alpha_{j}^{p}=\frac{%
{\displaystyle\prod\limits_{\ell=0}^{j}}
\left(  \ell^{2}-p^{2}\right)  }{(2j+2)!},\qquad1\leq j\leq p-1,\quad p\geq2
\label{resalpha}%
\end{equation}
Moreover, for $\kappa\in\left[  0,4p^{2}\right]  $ it holds%
\[
\left\vert \frac{2}{p^{2}}\sum_{j=1}^{p-1}\alpha_{j}^{p}\left(  \frac{\kappa
}{p^{2}}\right)  ^{j}\right\vert \leq\frac{\kappa}{12}\quad\text{and\quad
} \left\vert \frac{2}{p^{2}}\sum_{j=1}^{p-1}\alpha_{j}^{p}\left(  \frac{\kappa}{p^{2}%
}\right)  ^{j-1} \right\vert \leq\frac{p^{2}-1}{12}.
\]

\end{lemma}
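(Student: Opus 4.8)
The guiding idea is that the coefficients $\alpha_{j}^{p}$ are nothing but the Taylor coefficients at $0$ of the \emph{shifted Chebyshev polynomial} $s\mapsto T_{p}(1-s/2)$, where $T_{p}$ is the degree-$p$ Chebyshev polynomial of the first kind. Concretely, the plan is to first establish the generating-function identity
\[
\sum_{j=1}^{p-1}\alpha_{j}^{p}\,s^{\,j+1}\;=\;\Psi_{p}(s)\;:=\;T_{p}\!\left(1-\tfrac{s}{2}\right)-1+\tfrac{p^{2}}{2}\,s,\qquad p\ge 2,
\]
from which the closed form (\ref{resalpha}) and both inequalities will follow with little extra work.

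To prove the identity I would argue by induction on $p$, matching recursions. Since $T_{p}(1)=1$ and $T_{p}'(1)=p^{2}$, the polynomial $\Psi_{p}$ has vanishing constant and linear part, and $\deg\Psi_{p}=p$; hence $\Psi_{p}(s)=\sum_{j=1}^{p-1}c_{j}^{p}s^{\,j+1}$ for suitable $c_{j}^{p}$. Substituting $x=1-s/2$ into the three-term recurrence $T_{p+1}(x)=2xT_{p}(x)-T_{p-1}(x)$ and simplifying (the constant and linear terms cancel identically) yields $\Psi_{p+1}(s)=(2-s)\Psi_{p}(s)-\Psi_{p-1}(s)+\tfrac{p^{2}}{2}s^{2}$. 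Reading off the coefficient of $s^{\,j+1}$ on both sides reproduces exactly the four cases of the recursion (\ref{rekalpha}) for the $c_{j}^{p}$: the case $j=1$ (which carries the extra $\tfrac{p^{2}}{2}$), the generic case $2\le j\le p-2$, and the two boundary cases $j=p-1$ and $j=p$, where one uses the convention $c_{j}^{q}=0$ for $j\notin\{1,\dots,q-1\}$. Since moreover $\Psi_{2}(s)=\tfrac12 s^{2}$ and $\Psi_{3}(s)=3s^{2}-\tfrac12 s^{3}$ agree with the prescribed $\alpha_{1}^{2},\alpha_{1}^{3},\alpha_{2}^{3}$, the induction closes and $c_{j}^{p}=\alpha_{j}^{p}$.

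The closed form (\ref{resalpha}) is then immediate: by the identity $\alpha_{j}^{p}$ is the coefficient of $s^{\,j+1}$ in $T_{p}(1-s/2)$, i.e. $\alpha_{j}^{p}=\tfrac{(-1/2)^{j+1}}{(j+1)!}\,T_{p}^{(j+1)}(1)$, and the classical value $T_{p}^{(k)}(1)=\tfrac{2^{k}k!}{(2k)!}\prod_{i=0}^{k-1}(p^{2}-i^{2})$ (readily obtained by differentiating the Chebyshev differential equation $(1-x^{2})y''-xy'+p^{2}y=0$ repeatedly at $x=1$) gives $\alpha_{j}^{p}=\tfrac{(-1)^{j+1}}{(2j+2)!}\prod_{i=0}^{j}(p^{2}-i^{2})=\tfrac{1}{(2j+2)!}\prod_{\ell=0}^{j}(\ell^{2}-p^{2})$. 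Alternatively (\ref{resalpha}) can be verified directly by induction on $p$ from (\ref{rekalpha}), at the cost of a longer but elementary manipulation of products.

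For the two estimates, set $s:=\kappa/p^{2}\in[0,4]$, $r:=s/2\in[0,2]$ and $t:=1-r\in[-1,1]$; the generating identity gives $\tfrac{2}{p^{2}}\sum_{j=1}^{p-1}\alpha_{j}^{p}(\kappa/p^{2})^{j}=\tfrac{2\Psi_{p}(s)}{p^{2}s}$ and $\tfrac{2}{p^{2}}\sum_{j=1}^{p-1}\alpha_{j}^{p}(\kappa/p^{2})^{j-1}=\tfrac{2\Psi_{p}(s)}{p^{2}s^{2}}$. Writing $\Psi_{p}(s)=\phi(t)$ with $\phi(t):=T_{p}(t)-T_{p}(1)-p^{2}(t-1)$, one has $\phi(1)=\phi'(1)=0$ and $\phi''=T_{p}''$, so Taylor's theorem with Lagrange remainder gives $|\Psi_{p}(s)|=|\phi(t)|=\tfrac12|T_{p}''(\xi)|\,(1-t)^{2}$ for some $\xi\in[-1,1]$, hence $|\Psi_{p}(s)|\le\tfrac12 r^{2}\max_{[-1,1]}|T_{p}''|$. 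The decisive input is V.\,A.\ Markov's inequality for the second derivative, $\max_{[-1,1]}|T_{p}''|=T_{p}''(1)=\tfrac13 p^{2}(p^{2}-1)$, whence $|\Psi_{p}(s)|\le\tfrac16 p^{2}(p^{2}-1)r^{2}$. Plugging this in together with $s=2r$ and $\kappa=2p^{2}r$ yields $\bigl|\tfrac{2}{p^{2}}\sum\alpha_{j}^{p}(\kappa/p^{2})^{j}\bigr|\le\tfrac16(p^{2}-1)r\le\tfrac16 p^{2}r=\kappa/12$ and $\bigl|\tfrac{2}{p^{2}}\sum\alpha_{j}^{p}(\kappa/p^{2})^{j-1}\bigr|\le\tfrac{p^{2}-1}{12}$ (the case $\kappa=0$ being trivial, or handled by continuity). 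I expect the main obstacle to be the first step: recognizing that the generating polynomial is a shifted Chebyshev polynomial and checking that \emph{all} cases of the recursion (\ref{rekalpha}), boundary cases included, are exactly those forced by $T_{p+1}=2xT_{p}-T_{p-1}$; once that is in place, the estimates rest entirely on the classical bound for $T_{p}''$ on $[-1,1]$, which is precisely where the factor $1/12$ originates.
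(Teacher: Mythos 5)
Your proof is correct and follows essentially the same route as the paper: the Chebyshev generating-function identity for the $\alpha_j^p$, Taylor expansion of $T_p$ at $1$ with Lagrange remainder, and the extremal property $\max_{[-1,1]}|T_p''|=T_p''(1)=\tfrac{1}{3}p^2(p^2-1)$, which is exactly where the factor $1/12$ comes from in both arguments. The only organizational difference is that you establish the generating identity by induction on $p$ against the three-term recurrence (a step the paper merely asserts) and then obtain (\ref{resalpha}) by reading off Taylor coefficients of $T_p(1-s/2)$, whereas the paper verifies (\ref{resalpha}) directly from the recursion (\ref{rekalpha}); both are sound.
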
%

\proof
To show that the constants $\alpha_{j}^{p}$ are in fact given by
\eqref{resalpha}, we first use the identity
\begin{equation}
p (p+j)(p+j-1) \dots(p+1)p(p-1)\dots(p-j+1)(p-j) =
{\displaystyle\prod\limits_{\ell=0}^{j}}
\left(  p^{2}-\ell^{2}\right)
\end{equation}
to rewrite \eqref{resalpha} as
\begin{equation}
\alpha_{j}^{p}=\frac{\left(  -1\right)  ^{j+1}p\left(  p+j\right)  !}{\left(
p-j-1\right)  !\left(  2j+2\right)  !}. \label{shortdef}%
\end{equation}
By using \eqref{shortdef} it is then straightforward to verify that
$\alpha_{j}^{p}$ satisfies the recursive definition in (\ref{rekalpha}).

Next, one proves by induction that
\begin{align*}
\sum_{j=1}^{p-1}\alpha_{j}^{p}x^{j}  &  =\frac{p^{2}}{2}+\frac{T_{p}\left(
1-\frac{x}{2}\right)  -1}{x}\\
\sum_{j=1}^{p-1}\alpha_{j}^{p}x^{j-1}  &  =\frac{p^{2}x+2\,T_{p}\left(
1-\frac{x}{2}\right)  -2}{2x^{2}}.
\end{align*}
with the \v{C}eby\v{s}ev polynomials $T_{p}$ of the first kind. We recall that
\begin{equation}
T_{p}^{\left(  m\right)  }\left(  1\right)  =%
{\displaystyle\prod\limits_{\ell=0}^{m-1}}
\frac{\left(  p^{2}-\ell^{2}\right)  }{\left(  2\ell+1\right)  }%
\qquad\text{and\quad}\left\Vert T_{p}^{(m)}\right\Vert _{L^{\infty}\left(
\left[  -1,1\right]  \right)  }=T_{p}^{\left(  m\right)  }\left(  1\right)  ,
\label{Tpproperties}
\end{equation}
where the first relation follows from \cite[(1.97)]{Rivlin} and the second one
from \cite[Theorem 2.24]{Rivlin}, see also \cite[Corollary 7.3.1]%
{SauterSchwab2010}.

Now, let $x=\kappa/p^{2}$. The condition $\kappa\in\left[
0,4p^{2}\right]  $ implies $\left[  1-\frac{x}{2},1\right]  \subset\left[
-1,1\right]  $. Hence, a Taylor argument shows that there exists $\xi
\in\left[  -1,1\right]  $ such that%
\begin{align}
\left\vert \sum_{j=1}^{p-1}\alpha_{j}^{p}x^{j} \right\vert &  =\left\vert \frac{p^{2}}{2}+\frac{T_{p}\left(
1\right)  -\frac{x}{2}T_{p}^{\prime}\left(  1\right)  +\frac{x^{2}}{8}%
T_{p}^{\prime\prime}\left(  \xi\right)  -1}{x}\right\vert \nonumber\\
&  =\left\vert \frac{x}{8}T_{p}^{\prime\prime}\left(  \xi\right) \right\vert  \leq\frac{p^{2}\left(
p^{2}-1\right)  }{24}x=\frac{p^{2}-1}{24}\kappa,\label{estsum}%
\end{align}
where we have also used \eqref{Tpproperties}.
Similarly, we get%
\begin{align*}
\left\vert \sum_{j=1}^{p-1}\alpha_{j}^{p}x^{j-1} \right\vert &  =\left\vert \frac{p^{2}x+2\left(  T_{p}\left(
1\right)  -\frac{x}{2}T_{p}^{\prime}\left(  1\right)  +\frac{x^{2}}{8}%
T_{p}^{\prime\prime}\left(  \xi\right)  \right)  -2}{2x^{2}}\right\vert \\
&  =\left\vert \frac{p^{2}x+2\left(  1-\frac{xp^{2}}{2}+\frac{x^{2}}{8}T_{p}%
^{\prime\prime}\left(  \xi\right)  \right)  -2}{2x^{2}}\right\vert =\frac{1}{8}%
\left\vert T_{p}^{\prime\prime}\left(  \xi\right)\right\vert   \leq\frac{p^{2}\left(  p^{2}-1\right)
}{24}.
\end{align*}%
\endproof

\bibliographystyle{abbrv}
\bibliography{nlailu,refs,references}

\newcommand{\noopsort}[1]{} \newcommand{\printfirst}[2]{#1}
  \newcommand{\singleletter}[1]{#1} \newcommand{\switchargs}[2]{#2#1}
  \def\cprime{$'$} \def\cprime{$'$} \def\cprime{$'$}
\begin{thebibliography}{10}

\bibitem{ABCM02}
D.~N. Arnold, F.~Brezzi, B.~Cockburn, and L.~D. Marini.
\newblock Unified analysis of discontinuous {G}alerkin methods for elliptic
  problems.
\newblock {\em SIAM J. Numer. Anal.}, 39(5):1749--1779, 2001/02.

\bibitem{ARW95}
U.~Ascher, S.~Ruuth, and B.~Wetton.
\newblock Implicit-explicit methods for time-dependent partial differential
  equations.
\newblock {\em SIAM J. Numer. Anal.}, 32(3):797--823, 1995.

\bibitem{Baker}
G.~A. Baker.
\newblock Error estimates for finite element methods for second order
  hyperbolic equations.
\newblock {\em SIAM J. Numer. Anal.}, 13(4):564--576, 1976.

\bibitem{BecJolyRodri}
E.~B{\'e}cache, P.~Joly, and J.~Rodr{\'{\i}}guez.
\newblock Space-time mesh refinement for elastodynamics. {N}umerical results.
\newblock {\em Comput. Methods Appl. Mech. Engrg.}, 194(2-5):355--366, 2005.

\bibitem{BergerOliger}
M.~Berger and J.~Oliger.
\newblock Adaptive mesh refinement for hyperbolic partial differential
  equations.
\newblock {\em J.~Comput.~Phys.}, 53:484--512, 1984.

\bibitem{scottbrenner3}
S.~C. Brenner and L.~R. Scott.
\newblock {\em The mathematical theory of finite element methods}, volume~15.
\newblock Springer, New York, third edition, 2008.

\bibitem{CHQZ06}
C.~Canuto, M.~Hussaini, A.~Quateroni, and T.~Zang, editors.
\newblock {\em Spectral Methods: Fundamentals in Single Domains}.
\newblock Springer--Verlag, 2006.

\bibitem{CiarletPb}
P.~Ciarlet.
\newblock {\em The finite element method for elliptic problems}.
\newblock North-Holland, 1987.

\bibitem{KS05}
B.~Cockburn, G.~Karniadakis, and C.-W. Shu, editors.
\newblock {\em Spectral/hp element for Cfd}.
\newblock Oxford University Press, 2005.

\bibitem{CJRT01}
G.~Cohen, P.~Joly, J.~Roberts, and N.~Tordjman.
\newblock Higher order triangular finite elements with mass lumping for the
  wave equation.
\newblock {\em SIAM J. Numer. Anal.}, 38:2047--2078, 2001.

\bibitem{ColFouJol3}
F.~Collino, T.~Fouquet, and P.~Joly.
\newblock A conservative space-time mesh refinement method for the 1-{D} wave
  equation. {I}. {C}onstruction.
\newblock {\em Numer. Math.}, 95(2):197--221, 2003.

\bibitem{ColFouJol2}
F.~Collino, T.~Fouquet, and P.~Joly.
\newblock A conservative space-time mesh refinement method for the 1-{D} wave
  equation. {II}. {A}nalysis.
\newblock {\em Numer. Math.}, 95(2):223--251, 2003.

\bibitem{ColFouJol1}
F.~Collino, T.~Fouquet, and P.~Joly.
\newblock Conservative space-time mesh refinement methods for the {FDTD}
  solution of {M}axwell's equations.
\newblock {\em J. Comput. Phys.}, 211(1):9--35, 2006.

\bibitem{CS07}
E.~Constantinescu and A.~Sandu.
\newblock Multirate time stepping methods for hyperbolic conservation laws.
\newblock {\em SIAM J. Sci. Comput.}, 33:239--278, 2007.

\bibitem{CS09}
E.~Constantinescu and A.~Sandu.
\newblock Multirate explicit {A}dams methods for time integration of
  conservation laws.
\newblock {\em SIAM J. Sci. Comput.}, 38:229--249, 2009.

\bibitem{DNBH15}
A.~Demirel, J.~Niegemann, K.~Busch, and M.~Hochbruck.
\newblock Efficient {M}ultiple {T}ime-{S}tepping {A}lgorithms of {H}igher
  {O}rder.
\newblock {\em J.~Comput.~Phys.}, 285:133--148, 2015.

\bibitem{DLM13}
S.~Descombes, S.~Lant\'eri, and L.~Moya.
\newblock Locally implicit discontinuous {G}alerkin method for time domain
  electromagnetics.
\newblock {\em J. Sci. Comp.}, 56:190--218, 2013.

\bibitem{DiazGrote09}
J.~Diaz and M.~J. Grote.
\newblock Energy conserving explicit local time-stepping for second-order wave
  equations.
\newblock {\em SIAM J. Sci. Comput.}, 31:1985--2014, 2009.

\bibitem{DG15}
J.~Diaz and M.~J. Grote.
\newblock Multi-level explicit local time-stepping methods for second-order
  wave equations.
\newblock {\em Comput. Methods Appl. Mech. Engrg.}, 291:240--265, 2015.

\bibitem{DFFL10}
V.~Dolean, H.~Fahs, L.~Fezoui, and S.~Lanteri.
\newblock Locally implicit discontinuous {G}alerkin method for time domain
  electromagnetics.
\newblock {\em J.~Comput.~Phys.}, 229:512--526, 2010.

\bibitem{DKT07}
M.~Dumbser, M.~K\"aser, and E.~Toro.
\newblock An arbitrary high-order discontinuous {G}alerkin method for elastic
  waves on unstructured meshes -- {V}. local time stepping and $p$-adaptivity.
\newblock {\em Geophys. J. Int.}, 171:695--717, 2007.

\bibitem{Dupont}
T.~Dupont.
\newblock ${L}^2 $--{E}stimates for {G}alerkin {M}ethods for {S}econd {O}rder
  {H}yperbolic {E}quations.
\newblock {\em SIAM J. Numer. Anal.}, 10(5):880--889, 1973.

\bibitem{Flaherty97}
J.~Flaherty, R.~Loy, M.S.Shephard, B.~Szymanski, J.~Teresco, and L.~Ziantz.
\newblock Adaptive local refinement with octree load-balancing for the parallel
  solution of three-dimensional conservation laws.
\newblock {\em J. Paral. Distrib. Comp.}, 47:139--152, 1997.

\bibitem{GearWells84}
C.~W. Gear and D.~R. Wells.
\newblock Multirate linear multistep methods.
\newblock {\em BIT}, 24:484--502, 1984.

\bibitem{GMM15}
M.~J. Grote, M.~Mehlin, and T.~Mitkova.
\newblock Runge-{K}utta-based explicit local time-stepping methods for wave
  propagation.
\newblock {\em SIAM J. Sci. Comput.}, 37(2):A747--A775, 2015.

\bibitem{Grote_Mitkova}
M.~J. Grote and T.~Mitkova.
\newblock Explicit local time-stepping methods for {M}axwell's equations.
\newblock {\em J. Comput. Appl. Math.}, 234(12):3283--3302, 2010.

\bibitem{GM13}
M.~J. Grote and T.~Mitkova.
\newblock High-order explicit local time-stepping methods for damped wave
  equations.
\newblock {\em J. Comput. Appl. Math.}, 239:270--289, 2013.

\bibitem{GSS06}
M.~J. Grote, A.~Schneebeli, and D.~Sch{\"o}tzau.
\newblock Discontinuous {G}alerkin finite element method for the wave equation.
\newblock {\em SIAM J. Numer. Anal.}, 44(6):2408--2431, 2006.

\bibitem{HO11}
M.~Hochbruck and A.~Ostermann.
\newblock Exponential multistep methods of {A}dams type.
\newblock {\em BIT}, 51:889--908, 2011.

\bibitem{HS16}
M.~Hochbruck and A.~Sturm.
\newblock Error analysis of a second-order locally implicit method for linear
  {M}axwell's equations.
\newblock {\em SIAM Journal on Numerical Analysis}, 54(5):3167--3191, 2016.

\bibitem{HV03}
W.~Hundsdorfer and J.~Verwer.
\newblock {\em Numerical solution of time-dependent
  advection-diffusion-reaction equations}, volume~33 of {\em Springer Series in
  Computational Mathematics}.
\newblock Springer-Verlag, Berlin, 2003.

\bibitem{JolyRodriguez}
P.~Joly and J.~Rodr{\'{\i}}guez.
\newblock An error analysis of conservative space-time mesh refinement methods
  for the one-dimensional wave equation.
\newblock {\em SIAM J. Numer. Anal.}, 43(2):825--859 (electronic), 2005.

\bibitem{KCGH07}
A.~Kanevsky, M.~H. Carpenter, D.~Gottlieb, and J.~S. Hesthaven.
\newblock Application of implicit-explicit high order {R}unge-{K}utta methods
  to discontinuous {G}alerkin schemes.
\newblock {\em J.~Comput.~Phys.}, 225:1753--1781, 2007.

\bibitem{LionsMagenesI}
J.~Lions and E.~Magenes.
\newblock {\em Non-{H}omogeneous {B}oundary {V}alue {P}roblems and
  {A}pplications}.
\newblock Springer-Verlag, Berlin, 1972.

\bibitem{LGM07}
F.~L\"orcher, G.~Gassner, and C.-D. Munz.
\newblock A discontinuous {G}alerkin scheme based on a space-time expansion.
  {I}. {I}nviscid compressible flow in one space dimension.
\newblock {\em J. Sc. Comp.}, 32:175--199, 2007.

\bibitem{MZKM13}
S.~Minisini, E.~Zhebel, A.~Kononov, and W.~A. Mulder.
\newblock Local time stepping with the discontinuous {G}alerkin method for wave
  propagation in 3{D} heterogeneous media.
\newblock {\em Geophysics}, 78:T67--T77, 2013.

\bibitem{MPFC08}
E.~Montseny, S.~Pernet, X.~Ferri\'{e}res, and G.~Cohen.
\newblock Dissipative terms and local time-stepping improvements in a spatial
  high order {D}iscontinuous {G}alerkin scheme for the time-domain {M}axwell's
  equations.
\newblock {\em J.~Comput.~Phys.}, 227:6795--6820, 2008.

\bibitem{Mul01}
W.~Mulder.
\newblock Higher-order mass-lumped finite elements for the wave equation.
\newblock {\em J. Comput. Acoust.}, 09:671--680, 2001.

\bibitem{MS15}
F.~L. M{\"u}ller and C.~Schwab.
\newblock Finite elements with mesh refinement for wave equations in polygons.
\newblock {\em J. Comput. Appl. Math.}, 283:163--181, 2015.

\bibitem{MuellerSchwab16}
F.~L. M\"uller and C.~Schwab.
\newblock Finite elements with mesh refinement for elastic wave propagation in
  polygons.
\newblock {\em Math. Meth. Appl. Sci.}, 39:527--542, 2016.

\bibitem{Piperno}
S.~Piperno.
\newblock Symplectic local time-stepping in non-dissipative {DGTD} methods
  applied to wave propagation problems.
\newblock {\em M2AN Math. Model. Numer. Anal.}, 40(5):815--841, 2006.

\bibitem{RPSUG15}
M.~Rietmann, M.~J. Grote, D.~Peter, and O.~Schenk.
\newblock Newmark local time stepping on high-performance computing
  architectures.
\newblock {\em J. Comput. Phys.}, 334:308--326, 2017.

\bibitem{Rivlin}
T.~J. Rivlin.
\newblock {\em Chebyshev Polynomials}.
\newblock Wiley, New York, 1974.

\bibitem{SauterSchwab2010}
S.~Sauter and C.~Schwab.
\newblock {\em Boundary Element Methods}.
\newblock Springer, Heidelberg, 2010.

\bibitem{StoerII}
J.~Stoer and R.~Bulirsch.
\newblock {\em Numerische {M}athematik}.
\newblock Springer-Verlag, Heidelberg, 3 edition, 1990.

\bibitem{TDMS09}
A.~Taube, M.~Dumbser, C.-D. Munz, and R.~Schneider.
\newblock A high-order discontinuous {G}alerkin method with time-accurate local
  time stepping for the {M}axwell equations.
\newblock {\em Int. J. Numer. Model.}, 22:77--103, 2009.

\bibitem{Ver09}
J.~Verwer.
\newblock Convergence and component splitting for the
  {C}rank-{N}icolson-leap-frog integration method.
\newblock Technical Report Technical Report MAS-E0902, CWI, 2009.

\end{thebibliography}

\end{document}